\documentclass[11pt, english,english]{article}
\usepackage{amsmath,amssymb,amsthm} 

\usepackage{mathrsfs,bm,enumerate}
\usepackage{graphicx,color,tikz}
\usepackage{caption,subcaption}
\usepackage{babel}
\usepackage[T1]{fontenc} 
\usepackage{geometry}
\usepackage[latin9]{inputenc}
\usepackage{amsthm}
\usepackage{lipsum}
\geometry{verbose,tmargin=3.1cm,bmargin=3.1cm,lmargin=3.1cm,rmargin=3.1cm,headheight=1.3cm,headsep=1.5cm,footskip=1.5cm}

\newcommand\blfootnote[1]{%
  \begingroup
  \renewcommand\thefootnote{}\footnote{#1}%
  \addtocounter{footnote}{-1}%
  \endgroup}
\newcommand{\inR}{\in \mathbb{R}}
\newcommand{\inZ}{\in \mathbb{Z}}

\newcommand{\R}{ \mathbb{R}}
\newcommand{\Z}{ \mathbb{Z}}
\newcommand{\T}{\mathbb{T}}
\newcommand{\N}{ \mathbb{N}}

\newcommand{\Lop}{{\rm L}}
\newcommand{\Dop}{{\rm D}}

\newcommand{\dint}{{\rm d}}
\newcommand{\Fourier}{ \mathcal{F}} 

\newcommand{\bx}{{\boldsymbol x}}
\newcommand{\bw}{{\boldsymbol \omega}}
\newcommand{\bk}{{\boldsymbol k}}
\newcommand{\bl}{{\boldsymbol l}}

\renewcommand{\[}{\begin{equation}}
\renewcommand{\]}[1]{\label{eq:#1}\end{equation}}

\providecommand{\abs}[1]{\left\lvert#1\right\rvert}

\providecommand{\set}[1]{ \left\{ #1  \right\}  }
\providecommand{\setb}[2]{ \left\{ #1 \ \middle| \  #2 \right\}  }

\providecommand{\parenth}[1]{\left( #1 \right) }

\def\V#1{{\boldsymbol{#1}}}         
\def\Spc#1{{\mathcal{#1}}}  
\def\Op#1{{\mathrm{#1}}}  
\def\ee{\mathrm{e}} 
\def\jj{\mathrm{i}}

\newtheorem{definition}{Definition}
\newtheorem{proposition}{Proposition}
\newtheorem{corollary}{Corollary}

\newtheorem{lemma}{Lemma}
\newtheorem{theorem}{Theorem}

\title{Beyond Wiener's Lemma:\\ Nuclear Convolution Algebras\\ and the Inversion of Digital Filters\thanks{The research leading to these results has received funding from the European Research Council under the European Union's Seventh Framework Programme (FP7/2007-2013) / ERC grant agreement $\text{n}^\circ$ 267439. 
}}

\author{
Julien Fageot  \thanks{Biomedical Imaging Group, \'Ecole polytechnique f\'ed\'erale de Lausanne (EPFL),
Station 17, CH-1015, Lausanne, Switzerland ({\tt julien.fageot@epfl.ch, michael.unser@epfl.ch, jpward@ncat.edu}). }
\and
Michael Unser \footnotemark[2]
\and
John Paul Ward
\thanks{
Department of Mathematics, North Carolina A\&T State University, Greensboro, NC 27411, USA
}
 }

\begin{document}

\maketitle

\begin{abstract} A convolution algebra is a topological vector space $\Spc X$ that is closed under the convolution operation. \blfootnote{2010 \textit{Mathematical Subject Classification.} Primary 46H05, 11M45,   46A45; Secondary 47B37.

\textit{Key words and phrases.} Wiener's lemma, sequence spaces, convolution algebras, nuclear spaces} It is said to be inverse-closed if each element of $\Spc X$ whose spectrum is bounded away from zero has a convolution inverse that is also part of the algebra.
The theory of discrete Banach convolution algebras is well established with a complete characterization of the weighted $\ell_1$ algebras that are inverse-closed---these are henceforth referred to as the Gelfand-Raikov-Shilov (GRS) spaces. 

Our starting point here is the observation that the space $\Spc S(\Z^d)$ of rapidly decreasing sequences, {which is not Banach but nuclear}, is an inverse-closed convolution algebra. 
This property propagates to the more constrained space of exponentially decreasing sequences $\Spc E(\Z^d)$ that we prove to be nuclear as well.
Using a recent extended version of the GRS condition, we then show that $\Spc E(\Z^d)$ is actually the smallest inverse-closed convolution algebra. This allows us to describe the hierarchy of the inverse-closed convolution algebras from the smallest, $\Spc E(\Z^d)$, to the largest, $\ell_{1}(\Z^d)$. 
In addition, we prove that, in contrast to $\Spc S(\Z^d)$,  all members of $\Spc E(\Z^d)$ admit well-defined convolution inverses in $\Spc S'(\Z^d)$ with the ``unstable'' scenario (when some frequencies are vanishing) giving rise to inverse filters with slowly-increasing impulse responses.
{Finally, we use those results to reveal the decay and reproduction properties of an extended family of cardinal spline interpolants.} 


\end{abstract}

\section{Introduction}

The task of inverting a digital filter is referred to as ``deconvolution'' in signal and image processing. It allows for the deblurring of digital images. Being able to invert discrete convolution operators is also crucial in approximation and sampling theory in order to specify interpolators and dual basis functions \cite{Aldroubi2001b,Aldoubi2001,Unser2000}.
It is well known that a $\ell_2$-stable inversion is possible if and only if the modulus of the discrete Fourier transform $\hat h=\Fourier_{\rm d}\{h\}$ of the filter is bounded from above and below \cite{Grafakos2008}---a condition that we shall refer to as ``$\ell_2$-invertibility''. 
The inverse filter $g$ is then simply given by $g=\Fourier_{\rm d}^{-1}\{1/\hat h\}$. The theoretical question of interest then is to characterize the stability and decay properties of the inverse filter $g$. 

The foundational result of this classical line of research in harmonic analysis is Wiener's lemma which states that, if $h\in \ell_1(\Z^d)$ and is invertible, then the convolution inverse $g$ is included in  $\ell_1(\Z^d)$ as well \cite{Groechenig2010,Krishtal2011,Newman1975,Wiener1932}. The other  ingredient is Young's inequality (the simple case with $p=1$) for discrete convolution, stating that
\begin{align}
\label{Eq:Young}
\|h \ast a\|_{\ell_1(\Z^d)}\le \|h\|_{\ell_1(\Z^d)} \|a\|_{\ell_1(\Z^d)}\end{align}for any $a,h \in \ell_1(\Z^d)$, 
where the discrete multidimensional convolution between $h$ and $a$ is defined by
\begin{align}
\label{Eq:conv}
(h \ast a)[\cdot]=\sum_{\bk \inZ^d} h[\cdot-\bk]a[\bk].
\end{align}
The norm inequality \eqref{Eq:Young} has two important consequences. The first is that the convolution operator $a \mapsto h \ast a$ is bounded on $\ell_1(\Z^d)$ if and only if\footnote{The reverse implication is obtained by taking the input to be the discrete impulse (neutral element of the convolution).} $h \in \ell_1(\Z^d)$. The second is that
$\ell_1(\Z^d)$ is a Banach convolution algebra, meaning that it is closed with respect to convolution; \emph{i.e.},
if $a, h \in \ell_1(\Z^d)$, then $h \ast a \in \ell_1(\Z^d)$.
A number of refinements of this characterization can be obtained via the specification of more constraining convolution algebras \cite{Feichtinger1979,Groechenig2010}.

The inverse-closedness of such convolution algebras can then be ensured with the help of extended versions of Wiener's lemma for weighed 
$\ell_1$-norms, the most general form being attributed to Gelfand, Raikov, and Shilov \cite{grochenig2007weight,Groechenig2010,Groechenig2004,Krishtal2011,Sun2007}. In particular, these results ensure that the convolution inverse of a filter that is algebraically decreasing retains the property, with the same order of decay.

As suggested by the title, our objective in this paper is to move beyond Wiener's lemma and the decay limit imposed by the Gelfand-Raikov-Shilov (GRS) criterion. Since the latter is an ``if and only if''  characterization, this is only possible outside the traditional realm of weighted Banach spaces.
{
Our proposal therefore is to switch to projective and inductive limits of Banach spaces, which provide a rigorous framework for extending classical results from functional analysis to non-Banach spaces. As it turns out, the relevant spaces have a nuclear structure.
}
To quote A. Pietsch  in \cite[pp. V]{Pietsch1972nuclear}: 

 \noindent \textit{``With a few exceptions the locally convex spaces encountered in analysis can be divided into two classes. First, there are the normed spaces, which belong to classical functional analysis, and whose theory can be considered essentially closed. The second class consists of the so-called nuclear locally convex spaces, which were introduced in 1951 by A. Grothendieck."}

\noindent While this requires the use of a more advanced formalism \cite{Grothendieck1955, Schaefer1999, Treves2006}, the payoff in our case is that we end up with stronger results on the decay of convolution inverses.

 Before presenting our contributions, we should mention important previous works about the inverse-closedness of non-Banach convolution algebras. Several authors have studied the space of exponentially decreasing sequences and have shown that it is stable under convolution and inversion \cite{Almira2006inverse,Demko1984decay,Jaffard1990proprietes,Reed1980methods}. As we shall see, this space plays a fundamental role within the complete family of inverse-closed convolution algebras. More recently, Fern\'andez, Galbis and Toft extended the GRS condition to the case of countable inductive limits of Banach convolution algebras \cite{Fernandez2014spectral}; on this subject, see also \cite{Baskakov1990wiener,Baskakov1997estimates,fernandez2015characterizations}. We should emphasize that these works have been developed not only for convolution operators, but for the more general framework of infinite-dimensional matrices whose elements are dominated by convolution kernels. 

As already announced, our focus here is on convolution. Our objectives are essentially two-fold.
First, we are aiming at a unified and self-contained treatment of inverse-closed convolution algebras for sequence spaces beyond the classical setting of Banach spaces, with the widest possible range of applicability. Working with convolution allows us to use extensively the discrete Fourier transform.  An interesting aspect is that some of the proofs become deceptively simple once the problem has been correctly specified. Second, we are introducing a classification and hierarchy of inverse-closed convolution algebras which helps us  delineate the boundaries of the framework.  The main points that are developed thereafter are as follows.

\begin{itemize}
\item We give simple proofs that the space of rapidly decreasing sequences $\Spc S(\Z^d)$ and the space of exponentially decreasing sequences $\Spc E(\Z^d)$ are inverse-closed nuclear convolution algebras. Our results on the nuclearity of $\Spc E(\Z^d)$ are new, to the best of our knowledge. 

\item We characterize the inverse-closedness of countable intersections and countable unions of Banach convolution algebras. For the union, we use the extended GRS condition, and show that it is equivalent to the inverse-closedness of the corresponding convolution algebra (the sufficiency was proved in \cite{Fernandez2014spectral}).

\item We prove that the space $\Spc E(\Z^d)$ is the projective limit of the GRS spaces (Theorem \ref{theo:icing}). It means that the space of exponentially decreasing sequences is the intersection of the complete family of inverse-closed Banach convolution algebras. 
We then deduce that $\Spc E(\Z^d)$ is the smallest inverse-closed convolution algebra among sequence spaces (Theorem \ref{theo:Esmallest}). These are possibly our most important results.

\item We prove that all members of $\Spc E(\Z^d)$, including those that have frequency nulls, admit a convolution inverse in the space of slowly increasing sequences $\Spc S'(\Z^d)$ (Theorem \ref{Theo:geninverse}). This is an improvement of an earlier result by De Boor, H\"ollig and Riemenschneider \cite{DeBoor1989}, who established the unconditional invertibility in $\Spc S'(\Z^d)$ of the more restricted family of compactly-supported filters. 

\item {
We demonstrate the relevance of our results for the problem of cardinal spline interpolation \cite{Meijering2002,Schoenberg1973}; that is, the determination of an interpolating spline taking predetermined values at the integers. 
Specifically, given some integer shift-invariant space
$V_{\varphi}={\rm span}\{\varphi(\cdot-\V k)\}_{\V k \in \Z^d}$, we relate the decay of the unique interpolant $\varphi_{\rm int} \in V_{\varphi}$ to the functional properties of the generator $\varphi$. In particular, we identify general conditions on $\varphi$ 
(resp., $\hat \varphi$) that ensure that $\varphi_{\rm int}$ decays exponentially fast.
These cover the classical case of polynomial spline interpolation where $\varphi$ is a compactly-supported B-spline \cite{Chui1987,Schoenberg1973}, as well as 
the more challenging scenario where $\varphi$ is the slowly increasing Green's function of some (elliptic) differential operator $\Lop$, extending the results of Madych and Nelson for the polyharmonic splines where $\Lop=(-\Delta)^n$ is the $n$-fold Laplacian \cite{madych1990}. 

}
\end{itemize}

The paper is organized as follows. In Section \ref{sec:notations}, we introduce some notations and definitions. In Section \ref{section:review}, we review the results on inverse-closed Banach convolution algebras. In Section \ref{section:EandS}, we study the spaces $\Spc S(\Z^d)$ and $\Spc E (\Z^d)$, which are two prominent examples of   non-Banach\footnote{A nuclear space cannot be normed, unless the number of dimensions is finite. The property of nuclearity comes hand-in-hand with the specification of the underlying topology and a characterization of the dual as an inductive or projective limit of Banach spaces.
} 
 inverse-closed convolution algebras with a well-defined topology. Our main results are presented  in Section \ref{section:main}, where we introduce the classification of inverse-closed convolution algebras and show that $\Spc E(\Z^d)$ is actually the smallest one. 
{
In Section \ref{sec:singular}, we consider the inversion of sequences of $\Spc E(\Z^d)$ with frequency nulls.
Finally, we conclude in Section \ref{sec:application} with the application of our results to the problem of interpolation on a uniform grid, which requires the inversion of a discrete convolution operator.
}
\section{Notations and Definitions} \label{sec:notations}

Following the standard convention in signal processing, we use square brackets to index sequences (\emph{e.g.}, discrete signals) and round ones to index functions (\emph{e.g.}, continuous-domain signals).
In this way, we can denote the sampled value of some function $f$ at some (multi-)integer location $\bx=\bk$ as $\left.f[\bk]=f(\bx)\right|_{\bx =\bk}$, while $f[\cdot]=(f[\bk])_{\bk \in \Z^d}$ represents the sampled version of the function $f(\cdot)$ on the multi-integer grid $\Z^d$.
For $\bm{x} =(x_1,\ldots,x_d) \in \R^d$, we denote $\lVert \bm{x} \rVert = (x_1^2+\cdots +x_d^2)^{1/2}$ the Euclidian norm and $\lvert \bm{x} \rvert = \abs{x_1}+\cdots + \abs{x_d}$ the $\ell_1$-norm.
	
	\subsection{Sequences Spaces}

\begin{definition}
\label{Def:signals}
A discrete multidimensional signal $a[\cdot]$ is 
\begin{itemize}
\item  \emph{rapidly decreasing} if  $\sup_{\bk \in \Z^d} (1 +\|\bk\|)^n|a[\bk]| < \infty$ for all $n \in \N$; that is, if  $a[\cdot]$  has a faster-than-algebraic decay at infinity;
\item \emph{exponentially decreasing} if there exists a constant $C$ and a rate $r>0$ such that $|a[\bk]| \leq C \ee^{-r\abs{\bm{k}}}$ for all $\bk \inZ^d$;
\item \emph{slowly increasing} if there exists a constant $C$ and an integer $n\in \N$ such that $|a[\bk]| \leq C (1+\|\bk\|)^n$ for all $\bk \inZ^d$; that is, if  $a[\cdot]$  does not grow faster than all polynomials.
\end{itemize}
The corresponding vector spaces are denoted $\Spc S(\Z^d)$, $\Spc E(\Z^d)$, and $\Spc S'(\Z^d)$ respectively.
\end{definition}

The above spaces can be endowed with natural topologies that make them topological vector spaces.
We detail their topological structures motivated by the fact that they are not Banach spaces. They are actually nuclear spaces \cite{Treves2006}, as will be made explicit in the sequel.
The space that has the simplest structure is
\begin{align} 
\label{eq:S}
\Spc S(\Z^d)&=  \bigcap_{n \in \mathbb{N}} \Big\{ a[\cdot]: \sup_{\bk \inZ^d} (1+\|\bk\|)^n |a[\bk]| < \infty \Big\},
\end{align}
which is a countable projective limit of Banach spaces, also called a Fr\'echet space. In particular, a sequence $(a_m)_{m \in \mathbb{N}}$ of elements of $\Spc S(\Z^d)$ converges to $a \in \Spc S(\Z^d)$ if, for every $n\in \mathbb{N}$, 
\begin{equation*}
	 \sup_{\bk \inZ^d} (1+\|\bk\|)^n |(a_m - a)[\bk]| \underset{m\rightarrow \infty}{\longrightarrow} 0.
\end{equation*}
 
By contrast, the spaces $\Spc E(\Z^d)$ and $\Spc S'(\Z^d)$ are countable inductive limits of Banach spaces, since we can write
\begin{align}
\Spc E(\Z^d)  &= \bigcup_{n\in \mathbb{N}} \Big\{ a[\cdot]: \sup_{\bk \inZ^d} \mathrm{e}^{-\abs{\bm{k}}/(n+1)   } \abs{a[\bm{k}]} < \infty \Big\}, \label{eq:E(Z)} \\
\Spc S'(\Z^d) &= \bigcup_{n\in \mathbb{N}} \Big\{ a[\cdot]: \sup_{\bk \inZ^d} (1+\|\bk\|)^{-n} \abs{a[\bm{k}]} < \infty \Big\}.  \label{eq:S'(Z)}
\end{align}
A sequence $(a_m)_{m \in \mathbb{N}}$ of elements of $\Spc S'(\Z^d)$ converges to $a\in \Spc S'(\Z^d)$ if  there exists $n_0 \in \mathbb{N}$ such that $\sup_{\bk \inZ^d} (1+\|\bk\|)^{-n_0} \abs{a_m[\bm{k}]} < \infty$ for all $m$, $\sup_{\bk \inZ^d} (1+\|\bk\|)^{-n_0} \abs{a[\bm{k}]} < \infty$, and
\begin{equation*}
	\sup_{\bk \inZ^d} (1+\|\bk\|)^{-n_0} \abs{(a_m-a)[\bm{k}]} \underset{m\rightarrow \infty}{\longrightarrow} 0.
\end{equation*}
The convergence in $\Spc E(\Z^d)$ obeys the same principle.

The space $\Spc S'(\Z^d)$ is moreover the topological dual of $\Spc S(\Z^d)$; that is, the space of continuous linear functionals from $\Spc S(\Z^d)$ to $\R$. Actually, we shall see that $\Spc E(\Z^d)$ is, like $\Spc S'(\Z^d)$, the topological dual of a Fr\'echet space. Thereafter, duals of topological vector spaces are endowed with the strong topology\footnote{Note that the strong topology on $\Spc S'(\Z^d)$ coincides with the inductive topology of \eqref{eq:S'(Z)}.}.

We say that a topological  vector space $E$ embeds into a topological vector space $F$, denoted as $E \subseteq F$, if $E$ is included in $F$ (set inclusion) and if the identity operator is continuous from $E$ to $F$. We have the following embedding relations
$$
\Spc E(\Z^d)\subseteq\Spc S(\Z^d)  \subseteq \ell_p(\Z^d) \subseteq     \Spc S'(\Z^d)
$$
with the property that any of the classical Banach spaces $\ell_p(\Z^d)$ with $1\leq p \leq \infty$ is sandwiched in-between.

	\subsection{Convolution Algebras and Inverse-Closedness}

The discrete convolution of two sequences $a$ and $b\in \Spc S'(\Z^d)$ is well-defined if $\sum_{\bm{l} \in \Z^d} \abs{a[\bm{l}]b[\bm{k}-\bm{l}]} <\infty$ for every $\bm{k} \in \Z^d$, in which case we set
		$$a*b = \sum_{\bm{l}\in \Z^d} a[\cdot - \bm{l}] b [\bm{l}].$$
The Kronecker delta $\delta[\cdot]$, that is $1$ at $\V 0$ and $0$ otherwise, is the neutral element for the convolution. 
 
\begin{definition}
	A (discrete) \emph{convolution algebra} is a sequence space $\Spc X \subseteq  \Spc S'(\Z^d)$ such that
	\begin{itemize}
		\item the Kronecker delta $\delta[\cdot]$ is in $\Spc X$;
		\item for all $a,b\in \Spc X$, the convolution $a*b$ is well-defined and is in $\Spc X$;
		\item the convolution defines a bilinear and continuous operator from $\Spc X \times \Spc X$ to $\Spc X$.
	\end{itemize}
	Moreover, if $\Spc X$ is a Banach space (resp., a nuclear space), we say that $\Spc X$ is a \emph{Banach convolution algebra} (resp., a \emph{nuclear convolution algebra}). 
\end{definition}

For a topological vector space $\Spc X \subseteq \Spc S'(\Z^d)$, we define $\Spc A (\Spc X)$ as the space of continuous linear shift-invariant operators from $\Spc X$ to itself. A shift-invariant operator $\Op T$ in $\Spc A(\Spc X)$ being a convolution, it is identified with its impulse response $h \in \Spc S'(\Z^d)$ such that $\Op T \{a\} = h*a$ for every $a\in \Spc X$. We denote the latter operator $\Op T_h$.  Then, $\Spc A(\Spc X)$ is isomorphically equivalent to a sequence space.
The case of a convolution algebra is characterized by the relation $\Spc A (\Spc X) = \Spc X$. Indeed, the convolution being continuous from $\Spc X \times \Spc X$ to $\Spc X$, the operator $\Op T_h$ is continuous from $\Spc X$ to itself for every $h \in \Spc X$, so $\Spc X \subset \Spc A (\Spc X)$. Reciprocally, if $\Op T_h \inÊ\Spc A (\Spc X)$, then, since $\delta[\cdot] \in \Spc X$, $h = \Op T_h \{\delta \} = h*\delta = h \in \Spc X$. For instance, if $\Spc X=\ell_1(\Z^d)$, we can invoke Young's inequality to deduce that   $\ell_1(\Z^d)$ is a convolution algebra; \emph{i.e.}, $\Spc A(\ell_1(\Z^d))=\ell_1(\Z^d)$.

\begin{definition} 
\label{Def:invertibity} A filter $h$  is said to be \emph{$\ell_2$-invertible} if $h*a$ is well-defined for every $a\in \ell_2(\Z^d)$ and there exist two constants $0<A,B<\infty$ such that $$A \|a\|_{\ell_2(\Z^d)}\le\|a \ast h\|_{\ell_2(\Z^d)} \le B \|a\|_{\ell_2(\Z^d)}, \quad \forall a\in \ell_2(\Z^d).$$
\end{definition}

The discrete-domain Fourier transform (or frequency response) of $h\in \Spc S'(\Z^d)$ is given by
$$
\hat h(\bw)=\Fourier_{\rm d}\{h\}(\bw)=\sum_{\bk \inZ^d} h[\bk] \ee^{-\mathrm{i} \langle \bw, \bk\rangle},$$
where the convergence holds in $\Spc S'(\T^d)$, the space of generalized functions on the $d$-dimensional torus $\mathbb{T}^d=[-\pi,\pi]^d$.
If $h \in \ell_1(\Z^d)$, then $\hat h(\bw)$ is defined pointwise, and is actually bounded and continuous over $\T^d$.
In that case, the invertibility of $h$ is equivalent to the simple condition $\hat h(\bw)\ne0$ for all $\bw \in \mathbb{T}^d$, and the inverse is $g = \Fourier_{\rm d}^{-1} \{1/\hat{h} \}$, where $\Fourier_{\rm d}^{-1}$ is the discrete inverse Fourier transform.

\begin{definition} 
\label{Def:incclosed}A  convolution algebra $\Spc X \subseteq \Spc S'(\Z^d)$  is said to be \emph{inverse-closed} if, for any $\ell_2$-invertible filter $h \in \Spc X$, there exists $g\in\Spc X$ such that $h*g$ and $g*h$ are well-defined and $$h\ast g=g\ast h=\delta[\cdot].$$
\end{definition}

\section{Review of Classical Results on Banach Convolution Algebras} \label{section:review}

We briefly introduce the subject of Banach convolution algebras for sequence spaces. 
For more  details and more general approaches, we refer the reader to the works of Feichtinger \cite{Feichtinger1979} and Gr\"ochenig \cite{Groechenig2010}.

	\subsection{Weighted Banach Convolution Algebras}

We say that  $w : \Z^d \rightarrow \R$  is a weighting sequence if it is  positive and symmetric; that is, if $w[\bm{k}] = w[- \bm{k}] > 0$ for every $\bm{k} \in \Z^d$.
It is convenient mathematically to describe/control the decay properties of signals via their inclusion in some appropriate weighted $\ell_p$-space
\begin{align}
\ell_{p,w}(\Z^d)=\left\{ a \in \Spc S'(\Z^d):\|a\|_{\ell_{p,w}(\Z^d)}= \|w[\cdot] a[\cdot]\|_{\ell_p(\Z^d)}<\infty \right\}
\label{eq:lpw}
\end{align}
where $w[\cdot]$ is a fixed weighting sequence.
We say that the weighting sequence $w[\cdot]$  is submultiplicative if $w[\bm{k} + \bm{l}] \leq w[\bm{k}] w[\bm{l}]$ for every $\bm{k}, \bm{l} \in \Z^d$. 
Note that this condition implies that $w[\bm{0}]\ge 1$. 
Another slightly less obvious consequence is that the growth of the sequence $w[\cdot]$ is necessarily bounded by an exponential:
\begin{align}
\label{eq;multexpo}
1 \le w[\bm{k}] \le \ee^{r \lvert \bm{k} \rvert}
\end{align}
with $r= \log ( \sup_{|\bm{k}|\le1} w[\bm{k}] ) \ge0$.

\begin{theorem} 
\label{Prop:algebra1}
The space $\ell_{1,w}(\Z^d)$ is a Banach convolution algebra 
if and only if the weighting sequence $w$ is submultiplicative.
\end{theorem}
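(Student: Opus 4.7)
The plan is to prove the two implications separately, with the forward direction (submultiplicativity implies algebra structure) being a direct computation and the reverse direction extracting submultiplicativity from continuity of convolution.

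For the forward direction, assume $w$ is submultiplicative. The membership $\delta[\cdot] \in \ell_{1,w}(\Z^d)$ is immediate since $\|\delta\|_{\ell_{1,w}}=w[\V 0]<\infty$. Next, for $h,a\in \ell_{1,w}(\Z^d)$, I use the crude bound $|(h*a)[\bk]|\le \sum_{\bl}|h[\bk-\bl]||a[\bl]|$ together with the pointwise estimate $w[\bk]=w[(\bk-\bl)+\bl]\le w[\bk-\bl]w[\bl]$. Summing over $\bk$ and applying Tonelli to interchange the order of summation yields the Young-type inequality
\[
\|h*a\|_{\ell_{1,w}(\Z^d)}\le \|h\|_{\ell_{1,w}(\Z^d)}\,\|a\|_{\ell_{1,w}(\Z^d)}.
\]
This single estimate simultaneously shows that the convolution is well-defined, that $\ell_{1,w}(\Z^d)$ is closed under convolution, and that convolution is continuous (in fact contractive) as a bilinear map, so $\ell_{1,w}(\Z^d)$ is a Banach convolution algebra.

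For the reverse direction, assume $\ell_{1,w}(\Z^d)$ is a Banach convolution algebra. By the definition of convolution algebra and standard properties of continuous bilinear maps between Banach spaces, there exists a constant $C>0$ such that $\|h*a\|_{\ell_{1,w}}\le C\|h\|_{\ell_{1,w}}\|a\|_{\ell_{1,w}}$ for all $h,a\in \ell_{1,w}(\Z^d)$. The key test functions are the shifted Kronecker deltas $h=\delta[\cdot-\bk]$ and $a=\delta[\cdot-\bl]$, which lie in $\ell_{1,w}(\Z^d)$ with $\|h\|_{\ell_{1,w}}=w[\bk]$, $\|a\|_{\ell_{1,w}}=w[\bl]$, and satisfy $h*a=\delta[\cdot-(\bk+\bl)]$, so that $\|h*a\|_{\ell_{1,w}}=w[\bk+\bl]$. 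Plugging in gives $w[\bk+\bl]\le C\,w[\bk]w[\bl]$ for every $\bk,\bl\in\Z^d$.

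The remaining subtlety, which I expect to be the only real snag, is to pass from this ``submultiplicativity up to a constant'' to genuine submultiplicativity. The standard trick is to replace $w$ by the equivalent weight $\tilde w = Cw$: the space $\ell_{1,\tilde w}(\Z^d)$ coincides with $\ell_{1,w}(\Z^d)$ with an equivalent norm, and
\[
\tilde w[\bk+\bl]=Cw[\bk+\bl]\le C^{2}w[\bk]w[\bl]=\tilde w[\bk]\tilde w[\bl],
\]
so that $\tilde w$ is submultiplicative. Thus the theorem is correctly interpreted (and typically stated) up to this innocuous renormalization of the weight, which one should flag briefly in the writeup. Combining the two directions establishes the characterization, and the exponential-growth consequence~\eqref{eq:multexpo} then follows by iterating submultiplicativity along coordinate directions.
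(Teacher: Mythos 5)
Your forward direction is the same computation as the paper's: the pointwise use of submultiplicativity, Tonelli, and the weighted Young inequality with constant $1$. The converse also uses the same test sequences $e_{\bk}=\delta[\cdot-\bk]$, but it diverges on one substantive point. The paper invokes the Banach-algebra property in its standard form $\|e_{\bk}*e_{\bl}\|_{\ell_{1,w}(\Z^d)}\le\|e_{\bk}\|_{\ell_{1,w}(\Z^d)}\|e_{\bl}\|_{\ell_{1,w}(\Z^d)}$ (norm submultiplicative, constant $1$), which yields $w[\bk+\bl]\le w[\bk]w[\bl]$ exactly, with no renormalization. You instead assume only continuity of the bilinear convolution map, which gives a constant $C$ and hence only $w[\bk+\bl]\le C\,w[\bk]w[\bl]$; your renormalization $\tilde w=Cw$ then proves that an \emph{equivalent} weight is submultiplicative, not $w$ itself. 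That is genuinely weaker than the statement as written, and the gap is not merely cosmetic: take the bounded weight with $w[\V 0]=1/2$ and $w[\bk]=1$ for $\bk\neq\V 0$; then $\ell_{1,w}(\Z^d)=\ell_1(\Z^d)$ with an equivalent norm, so convolution is a continuous bilinear map on it, yet $w[\V 0]>w[\V 0]^2$ shows $w$ is not submultiplicative. So under your continuity-only reading the literal ``if and only if'' fails, and your caveat is precisely where the hypothesis matters: the theorem is intended (and proved in the paper) with the Banach-algebra convention $\|h*a\|\le\|h\|\,\|a\|$, under which your extra renormalization step is unnecessary and the one-line converse already gives exact submultiplicativity of $w$. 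Your argument is correct for the modified statement ``$\ell_{1,w}(\Z^d)$ is closed under convolution with a continuous bilinear product if and only if $w$ is submultiplicative up to an equivalent weight,'' but to prove the theorem as stated you should either adopt the constant-$1$ convention at the outset or restate the conclusion of the converse accordingly.
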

We cannot resist to reproduce the short proof of this powerful result since we find it quite enlightening (see also \cite{Feichtinger1979, Groechenig2010}).
\begin{proof}
For the direct part, we note that the submultiplicativity of $w$ is equivalent to $0<w[\bk]=w[\bk-\V l+\V l]\le w[\bk-\V l]w[\V l]$, which results in the pointwise estimate
\begin{eqnarray*}
w[\bk]\,\big|(h \ast a)[\bk]\big|&=&\left| \sum_{\V l \in \Z^d} h[\V l] a[\bk-\V l]\right|w[\bk] \\
&\le& \sum_{\V l \in \Z^d} \left(\big|h[\V l]\big|\,w[\V l]\right)\, \left(\big|a[\bk-\V l]\big|\,w[\bk-\V l]\right)
\end{eqnarray*}
Next, we evaluate the $\ell_1$ norm of both sides of the above inequality as
\begin{align*}
\|h \ast a\|_{\ell_{1,w}(\Z^d)}&\le \sum_{\V l \in \Z^d} \sum_{\V k \in \Z^d} \left(w[\V l]\big|h[\V l]\big|\right) \left(w[\V k - \V l]\big|a[\V k-\V l]\big|\right)  \\
&\ \ = \sum_{\V l \in \Z^d} \sum_{\V n \in \Z^d}  \left(w[\V l]\big|h[\V l]\big|\right)\left( w[\V n] \big| a[\V n] \big| \right)  \tag{{change of variable $\V n=\V k-\V l$}} \nonumber\\
& =\|h\|_{\ell_{1,w}(\Z^d)} \|a\|_{\ell_{1,w}(\Z^d)}
\end{align*}
where the exchange of sums is justified by Tonelli's theorem, which yields the required weighted version of Young's inequality.
Since the latter is sharp (with the choice $a=\delta[\cdot]$), it also 
shows that the induced norm of the convolution operator $a \mapsto h \ast a$ is $\|h\|_{\ell_{1,w}(\Z^d)}$.
As for the converse part, we consider the impulsive input signal $e_{\bk}=\delta[\cdot-\bk]$, which is such that
$$
\|e_{\bk}\|_{\ell_{1,w}(\Z^d)}=w[\bk] 
$$
By recalling that $e_{\bk} \ast e_{\V l}=e_{\bk+\V l}$ and invoking the Banach algebra property, we  get
\begin{align}
w[\bk+\V l]=\|e_{\bk} \ast e_{\V l}\|_{\ell_{1,w}(\Z^d)} \le \|e_{\bk} \|_{\ell_{1,w}(\Z^d)}\| e_{\V l}\|_{\ell_{1,w}(\Z^d)}=w[\bk]w[\V l],
\end{align}
which is the desired inequality for $w$ (submultiplicativity).
\end{proof}

	\subsection{Inverse-Closedness of Banach Convolution Algebras}

The issue of the inverse-closedness of weighted Banach convolution algebras was beautifully settled by Gelfand, Raikov, and Shilov \cite{Groechenig2010,Gelfand1964}.
\begin{definition}
\label{Def:GRS}
A submutiplicative weighting sequence $w[\cdot]$  fulfils the GRS (Gelfand-Raikov-Shilov) condition (in short,we say that $w[\cdot]$ is GRS) if
\begin{equation} \label{eq:GRSweight}
\lim_{m\to \infty} w[m\bk]^{1/m}=1,  \quad \forall \bk \in \Z^d.
\end{equation}
\end{definition}

The upper bound in \eqref{eq;multexpo} indicates that the faster-growing submultiplicative weights are the exponential ones.
Interestingly, these also corresponds to the breakpoint beyond which the GRS condition no longer holds.
Let $w$ be a   submultiplicative weighting sequence. If there exists $b \in[0,1)$ and $C, r, k_0> 0$ such that
$$w[\bk]\le C \ee^{r |\bk|^{b}} \mbox{ for all } |\bk|\ge k_0,$$
then $w$ satisfies the GRS condition \eqref{eq:GRSweight}.
On the other hand, $w$ is not GRS if for some $C,r,k_0 > 0$, 
$$
   C \ee^{r |\bk|}\le  w[\bk] \mbox{ for all } |\bk|\ge k_0.
$$
To see this, it suffices to consider the sequence $v[\bk]=\ee^{r|\bk|^b}$ which is such that
$
\frac{1}{m}\log v[m \bk]= r|\bk|^b m^{b-1}.
$
Clearly, the latter is GRS if and only if $\lim_{n \to \infty} n^{b-1}=0$ which is equivalent to $0\leq  b<1$.

\begin{proposition} [Weighted version of Wiener's lemma, Theorem 5.24, \cite{Groechenig2010}] 
\label{prop:Wienerweight}
Let $h \in \ell_{1,w}(\Z^d)$ where $w$ is a submultiplicative weight that satisfies the GRS condition.  If $\Fourier_{\rm d}\{h\}(\bw)=\hat h(\bw)\ne0$ for all $\bw \in \mathbb{T}^d$ (invertibility), then $g=\Fourier_{\rm d}^{-1}\{1/\hat h\} \in \ell_{1,w}(\Z^d)$.
\end{proposition}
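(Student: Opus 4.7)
My plan is to invoke the commutative Banach algebra framework of Gelfand and to characterize the maximal ideal space (spectrum) of $\ell_{1,w}(\Z^d)$ via the GRS condition. The structure is as follows: $\ell_{1,w}(\Z^d)$ is by Theorem \ref{Prop:algebra1} a commutative Banach algebra under convolution, with unit $\delta[\cdot]$. Moreover, it is involutive under $a^*[\bk] = \overline{a[-\bk]}$ (here the symmetry $w[\bk] = w[-\bk]$ is crucial to keep the involution isometric). Gelfand's general theorem then says that an element $h$ is invertible in the algebra if and only if $\chi(h) \neq 0$ for every nonzero multiplicative linear functional $\chi : \ell_{1,w}(\Z^d) \to \C$. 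The game, therefore, is to show that the characters of $\ell_{1,w}(\Z^d)$ are exactly the Fourier evaluations $h \mapsto \hat h(\bw)$ for $\bw \in \T^d$.

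The key step is this identification. Let $\chi$ be a character and set $z_j = \chi(e_{\V e_j})$ for the canonical basis vector $\V e_j$, so that by multiplicativity $\chi(e_{\bk}) = z_1^{k_1} \cdots z_d^{k_d}$ for every $\bk \in \Z^d$. Since characters on a Banach algebra have operator norm at most $1$, I obtain
\[
\abs{z_1^{k_1}\cdots z_d^{k_d}} = \abs{\chi(e_{\bk})} \le \|e_{\bk}\|_{\ell_{1,w}(\Z^d)} = w[\bk].
\]
Applying this to $m\bk$ and $-m\bk$, taking $m$th roots, and letting $m\to\infty$ gives
\[
\abs{z_1^{k_1}\cdots z_d^{k_d}} \le \lim_{m\to\infty} w[m\bk]^{1/m} = 1, \qquad \abs{z_1^{-k_1}\cdots z_d^{-k_d}} \le \lim_{m\to\infty} w[-m\bk]^{1/m} = 1
\]
by the GRS condition. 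Hence each $|z_j| = 1$, i.e., $z_j = \ee^{-\jj \omega_j}$ for some $\omega_j \in [-\pi,\pi]$, which means $\chi(e_{\bk}) = \ee^{-\jj \langle \bw, \bk\rangle}$. Since finite linear combinations of $(e_{\bk})_{\bk \in \Z^d}$ are dense in $\ell_{1,w}(\Z^d)$ and $\chi$ is continuous, this uniquely extends to $\chi(h) = \hat h(\bw)$ for all $h \in \ell_{1,w}(\Z^d)$. (Conversely, every such $\bw \in \T^d$ does define a character because $|\hat h(\bw)| \le \|h\|_{\ell_1} \le \|h\|_{\ell_{1,w}}$ and $\widehat{a*b} = \hat a \hat b$.)

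Once the spectrum is identified with $\T^d$, Gelfand's invertibility theorem immediately yields: $h$ is invertible in $\ell_{1,w}(\Z^d)$ if and only if $\hat h(\bw) \neq 0$ for every $\bw \in \T^d$. In that case the convolution inverse $g$ lies in $\ell_{1,w}(\Z^d)$, and by taking the Fourier transform of $h*g = \delta[\cdot]$ we recover $\hat g(\bw) = 1/\hat h(\bw)$, i.e., $g = \Fourier_{\rm d}^{-1}\{1/\hat h\}$. I also need to reconcile the hypothesis of $\ell_2$-invertibility in Definition \ref{Def:invertibity} with the pointwise nonvanishing condition; for $h \in \ell_{1,w}(\Z^d) \subset \ell_1(\Z^d)$, $\hat h$ is continuous on the compact torus, so nonvanishing, boundedness away from zero, and $\ell_2$-invertibility are all equivalent.

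The main obstacle is the spectrum computation, and within it the use of the GRS condition to force $|z_j| = 1$. Submultiplicativity alone only yields $|z_j| \le \ee^{r}$ by \eqref{eq;multexpo}, which is not sharp enough; it is exactly the subexponential growth encoded in $\lim_m w[m\bk]^{1/m} = 1$ that collapses the polydisc to the torus and makes Gelfand's theory reproduce the desired Wiener-type statement. All other pieces (Banach algebra property, density of finitely supported sequences, continuity of the Fourier transform) are routine.
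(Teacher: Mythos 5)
Your argument is correct, and it is essentially the standard Gelfand-theoretic proof of the weighted Wiener lemma: the paper itself gives no proof of Proposition \ref{prop:Wienerweight}, citing it as Theorem 5.24 of \cite{Groechenig2010}, where precisely this route is taken --- identify the characters of $\ell_{1,w}(\Z^d)$ with the evaluations $h \mapsto \hat h(\bw)$, using the GRS condition to force $|\chi(e_{\bk})|=1$, and then apply Gelfand's invertibility criterion. So you have reproduced the cited proof rather than found a genuinely different one; all the auxiliary steps you flag (automatic continuity of characters, density of finitely supported sequences, $\widehat{a*b}=\hat a\,\hat b$) are handled correctly.
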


Hence, by combining Theorem \ref{Prop:algebra1} and Proposition \ref{prop:Wienerweight}, we conclude that
$\ell_{1,w}(\Z^d)$ is an inverse-closed convolution algebra if $w$ is GRS.
Remarkably, the implication also goes the other way around, which closes the topic of discrete Banach convolution algebras. 
We summarize the situation in the following:

\begin{theorem}[Corollary 5.27, \cite{Groechenig2010}]
\label{Theo:Wienerweight}
Let $w$ be a submultiplicative weighting sequence. Then the Banach convolution algebra $\ell_{1,w} (\Z^d)$ is inverse-closed if and only if $w$ satisfies the GRS condition.
\end{theorem}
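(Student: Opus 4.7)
The ``if'' direction of Theorem 2 is already in hand: it is the combination of Theorem 1 (which gives the Banach convolution algebra structure under submultiplicativity) with the weighted Wiener lemma of Proposition 1 (which yields inverse-closedness under the additional GRS hypothesis). So the plan is to establish the converse: assuming $\ell_{1,w}(\Z^d)$ is an inverse-closed Banach convolution algebra, deduce that the submultiplicative weight $w$ must satisfy $\lim_{m\to\infty} w[m\bk]^{1/m}=1$ for every $\bk \in \Z^d$.

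First I would record two easy consequences of submultiplicativity. Setting $\bk=\bl=\V 0$ in $w[\bk+\bl]\le w[\bk]w[\bl]$ gives $w[\V 0]\ge 1$, and then $1\le w[\V 0]\le w[\bk]w[-\bk]=w[\bk]^2$ (by symmetry), so $w[\bk]\ge 1$ for every $\bk$. Second, the sequence $m\mapsto \log w[m\bk]$ is subadditive in $m$, so Fekete's lemma yields the existence of
\[
\rho(\bk)\eqdef \lim_{m\to\infty} w[m\bk]^{1/m}=\inf_{m\ge 1} w[m\bk]^{1/m}\ge 1.
\]*{Fekete}
The goal is to show $\rho(\bk)\le 1$.

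The key step is to exhibit, for any $\bk\in\Z^d$, a family of invertible filters in $\ell_{1,w}(\Z^d)$ whose inverses realize geometric series weighted by $w[m\bk]$. Fix $\bk$ and, for $\alpha\in\C$ with $|\alpha|<1$, set $h=\delta[\cdot]-\alpha\,\delta[\cdot-\bk]$. Clearly $h\in\ell_{1,w}(\Z^d)$ since it is finitely supported, and its frequency response $\hat h(\bw)=1-\alpha\,\ee^{-\jj\langle \bw,\bk\rangle}$ satisfies $|\hat h(\bw)|\ge 1-|\alpha|>0$ on $\T^d$, so $h$ is $\ell_2$-invertible in the sense of Definition 2. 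By the assumed inverse-closedness, the convolution inverse $g$ lies in $\ell_{1,w}(\Z^d)$. Computing $g$ by the geometric Neumann expansion (which converges in $\ell_{1,w}$ since $\|\alpha\,\delta[\cdot-\bk]\|_{\ell_{1,w}}=|\alpha|w[\bk]$ and the partial sums form a Cauchy sequence only after we check finiteness; an alternative is to verify directly that $g\ast h=\delta$) gives
\[
g=\sum_{m=0}^{\infty}\alpha^m\,\delta[\cdot-m\bk],\qquad \|g\|_{\ell_{1,w}(\Z^d)}=\sum_{m=0}^\infty |\alpha|^m\, w[m\bk]<\infty.
\]*{Neumann}
Since this series converges for every $|\alpha|<1$, the Cauchy--Hadamard root test gives $|\alpha|\,\rho(\bk)\le 1$ for all such $\alpha$; letting $|\alpha|\nearrow 1$ yields $\rho(\bk)\le 1$. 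Combined with $\rho(\bk)\ge 1$ from \eq{Fekete}, this is precisely the GRS condition.

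The main subtlety, and where I would spend most of the care, is justifying that the formal series for $g$ really is the inverse of $h$ inside $\ell_{1,w}(\Z^d)$: by inverse-closedness we only know a priori that \emph{some} $g\in\ell_{1,w}$ exists with $h\ast g=\delta$; identifying it with the explicit geometric series then follows by uniqueness of $\ell_2$-inverses (or, equivalently, by convolving the series with $h$ termwise, the interchange of summations being legitimate because the double sum is absolutely summable once we know $\|g\|_{\ell_{1,w}}<\infty$). Everything else in the argument is a one-line application of Fekete's lemma, the root test, and the trivial lower bound $w\ge 1$.
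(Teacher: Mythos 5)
Your proposal is correct and follows essentially the same route as the paper and the reference it cites: the ``if'' part is exactly the combination of Theorem \ref{Prop:algebra1} with Proposition \ref{prop:Wienerweight}, and your ``only if'' part uses the same test filters $h=\delta[\cdot]-\alpha\,\delta[\cdot-\bk]$ with geometric inverses $\sum_{m\ge 0}\alpha^m\delta[\cdot-m\bk]$ that the paper recalls from \cite[Corollary 5.27]{Groechenig2010} in the proofs of Proposition \ref{prop:GRStypeII} and Theorem \ref{theo:GRStypeIII}. The only difference is organizational (you argue directly, letting $|\alpha|\nearrow 1$ and invoking Fekete's lemma and the root test, whereas the paper's version fixes a single $\alpha$ below the assumed limit $>1$ and exhibits the divergence of $\sum_m \ee^{-\alpha m} w[m\bk_0]$), which is a cosmetic reorganization of the same argument.
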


\section{Inverse-Closed Nuclear Convolution Algebras}  \label{section:EandS}

In this section, we study the sequence spaces $\Spc S(\Z^d)$ and $\Spc E(\Z^d)$. Since they are not Banach spaces, we cannot directly apply the results of Section \ref{section:review}. 
As we shall see, they are two examples of inverse-closed \emph{nuclear} convolution algebras. 
They are different in the following way: $\Spc S(\Z^d)$ is a countable \emph{intersection} of Banach spaces while $\Spc E(\Z^d)$ is a countable \emph{union} of Banach spaces. 

	\subsection{Reminder on Nuclear Spaces}\label{nuclear}

Nuclear spaces were introduced in \cite{Grothendieck1955} as a  natural complement  of Banach spaces in the field of functional analysis.
For our purpose, we are concerned by the characterization of the nuclearity of sequence spaces of a special type: countable intersections of weighted $\ell_1$ spaces and their dual spaces, for which we have the following characterization.

\begin{proposition}[Proposition 28.16, \cite{Meise1997introduction}] \label{prop:nuclearcriterion}
	Let $\bm{w} = (w_n)_{n\in \N}$ be a family of weights such that $w_{n} \leq w_{n+1}$ for every $n$. The Fr\'echet space $\Spc X = \bigcap_{n \in \N} \ell_{1,w_n} (\Z^d)$ is nuclear if and only if
	\begin{equation}
		\forall n \in \N,  \ \exists m\geq n, \quad \sum_{\bm{k}\in\Z^d} \frac{w_n[\bm{k}]}{w_m[\bm{k}]} < \infty.
	\end{equation}
\end{proposition}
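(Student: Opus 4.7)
The plan is to apply the standard local criterion for nuclearity of a Fr\'echet space (as in \cite{Meise1997introduction}, Ch. 28, or \cite{Treves2006}): $\Spc X$ is nuclear if and only if, for every continuous seminorm $p$ on $\Spc X$, there exists a larger continuous seminorm $q \geq p$ such that the canonical linking map $\widehat{\Spc X}_q \to \widehat{\Spc X}_p$ between the associated local Banach spaces is a nuclear operator. The first step is to pin down a fundamental system of seminorms. Since $w_n \leq w_{n+1}$, the norms $p_n(a) := \|a\|_{\ell_{1,w_n}(\Z^d)}$ form an increasing sequence that generates the projective (Fr\'echet) topology of $\Spc X = \bigcap_{n \in \N} \ell_{1,w_n}(\Z^d)$. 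Because finitely supported sequences belong to $\Spc X$ and are dense in each $\ell_{1,w_n}(\Z^d)$, the local Banach space $\widehat{\Spc X}_{p_n}$ is canonically isometric to $\ell_{1,w_n}(\Z^d)$, and the criterion reduces to asking when, for every $n$, some $m \geq n$ renders the inclusion $\iota_{n,m}\colon \ell_{1,w_m}(\Z^d) \hookrightarrow \ell_{1,w_n}(\Z^d)$ nuclear.

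Second, I reduce this inclusion to a diagonal operator on the unweighted $\ell_1(\Z^d)$. The weighting maps $M_n\colon \ell_{1,w_n}(\Z^d) \to \ell_1(\Z^d)$, $a \mapsto w_n[\cdot]\, a[\cdot]$ (and similarly $M_m$) are isometric isomorphisms, and the composition $M_n \circ \iota_{n,m} \circ M_m^{-1}$ is the diagonal multiplier
\begin{equation*}
D_{n,m}\colon \ell_1(\Z^d) \to \ell_1(\Z^d), \qquad c[\bk] \mapsto \frac{w_n[\bk]}{w_m[\bk]}\,c[\bk].
\end{equation*}
Nuclearity being preserved under isometric conjugation, $\iota_{n,m}$ is nuclear if and only if $D_{n,m}$ is.

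Third, I invoke the classical fact that a diagonal operator $D = \mathrm{diag}(d[\bk])$ on $\ell_1(\Z^d)$ is nuclear if and only if $d \in \ell_1(\Z^d)$. Sufficiency is explicit via the rank-one expansion $D = \sum_{\bk \in \Z^d} d[\bk]\, e_{\bk}^{*} \otimes e_{\bk}$, where $e_{\bk} \in \ell_1(\Z^d)$ and $e_{\bk}^{*} \in \ell_\infty(\Z^d) \simeq \ell_1(\Z^d)^*$ are unit coordinate vectors, which is a nuclear representation of total variation $\sum_{\bk}|d[\bk]|$. Necessity follows by considering any nuclear representation $D = \sum_{i} \lambda_i\, x_i^{*} \otimes y_i$ with $\sum_i |\lambda_i|\,\|x_i^{*}\|_{\ell_\infty}\|y_i\|_{\ell_1} < \infty$, reading off the diagonal entries as $d[\bk] = \sum_i \lambda_i\, x_i^{*}[\bk]\, y_i[\bk]$, and estimating
\begin{equation*}
\sum_{\bk} |d[\bk]| \;\leq\; \sum_{\bk} \sum_{i} |\lambda_i|\,|x_i^{*}[\bk]|\,|y_i[\bk]| \;\leq\; \sum_{i} |\lambda_i|\,\|x_i^{*}\|_{\ell_\infty}\|y_i\|_{\ell_1} \;<\; \infty
\end{equation*}
by Fubini--Tonelli and a coordinatewise H\"older inequality ($p=1,q=\infty$). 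Specializing to $d[\bk] = w_n[\bk]/w_m[\bk]$ produces exactly the stated summability condition, and assembling the three steps via the local criterion delivers the claimed equivalence.

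The step I expect to require the most care is the necessity direction of the diagonal characterization in step three: the sufficient direction is a one-line construction, whereas extracting $\sum_{\bk}|d[\bk]| < \infty$ from an abstract nuclear factorization hinges on the componentwise duality between $\ell_\infty(\Z^d)$ and $\ell_1(\Z^d)$ and the switch of order of summation above. Everything else is essentially bookkeeping: checking that the natural seminorms form a fundamental system, that the completions give back the $\ell_{1,w_n}$ spaces, and that isometric conjugation preserves nuclearity.
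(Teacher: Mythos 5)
Your argument is correct. Note, however, that the paper does not prove this statement at all: it is quoted verbatim from Meise--Vogt (Proposition 28.16), i.e.\ the Grothendieck--Pietsch nuclearity criterion for K\"othe echelon spaces $\lambda^1(\bm{w})$, and the paper only \emph{uses} it (for $\Spc S(\Z^d)$ and for the predual of $\Spc E(\Z^d)$). What you have written is, in essence, the standard proof of that cited criterion, and all three steps are sound: the identification of the local Banach spaces $\widehat{\Spc X}_{p_n}$ with $\ell_{1,w_n}(\Z^d)$ is legitimate because the finitely supported sequences lie in $\Spc X$ and are dense in each $\ell_{1,w_n}(\Z^d)$; the conjugation by the isometries $a\mapsto w_n a$ correctly turns the linking maps into diagonal operators on $\ell_1(\Z^d)$; and your two-sided characterization of nuclear diagonal operators on $\ell_1$ (rank-one expansion for sufficiency, extraction of the diagonal from an arbitrary nuclear representation plus Tonelli and the $\ell_1$--$\ell_\infty$ duality for necessity) is the classical argument. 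The only point you pass over quickly is the reduction of the abstract criterion (``for every continuous seminorm $p$ there is $q\geq p$ with nuclear linking map'') to the fundamental system $(p_n)$: in the necessity direction one must dominate the auxiliary seminorm $q$ by some $Cp_m$ and use that a nuclear map composed with a bounded map is nuclear, and the monotonicity $w_n\leq w_{n+1}$ lets you take $m\geq n$. This is routine and does not affect correctness; your proof buys a self-contained justification of a result the paper simply imports from the literature.
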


This particular sequence space is studied extensively in \cite[Chapter 27]{Meise1997introduction}, where it is denoted by $\Spc X = \lambda^1(\bm{w})$. 
Here, we are also interested in spaces that are duals of nuclear Fr\'echet spaces of the form $\Spc X = \bigcap_{n \in \N} \ell_{1,w_n} (\Z^d)$.

\begin{proposition}[Theorem 9.6, \cite{Schaefer1999}] \label{prop:dualnuclear}
	The strong dual of a nuclear Fr\'echet space is nuclear.
\end{proposition}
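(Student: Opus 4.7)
My plan is to use the standard characterization of a nuclear Fr\'echet space as a countable projective limit of Banach spaces with nuclear bonding maps, then pass to duals. Specifically, since $E$ is nuclear Fr\'echet, I can choose a decreasing fundamental sequence $(U_n)_{n\geq 1}$ of absolutely convex closed $0$-neighborhoods of $E$ such that, after possibly passing to a subsequence, each canonical linking map $\widehat E_{U_{n+1}} \to \widehat E_{U_n}$ between the Banach spaces associated to the Minkowski functionals is nuclear. Setting $B_n = U_n^{\circ} \subset E'$ then provides the polar sequence that I intend to use as a fundamental system of bounded sets in the strong dual.

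\textbf{Dualization.} Because $E$ is Fr\'echet, hence barrelled, every strongly bounded subset of $E'$ is equicontinuous, so $(B_n)_{n \geq 1}$ is a fundamental sequence of absolutely convex, weakly closed, strongly bounded subsets of $E'$. The normed space $E'_{B_n}$ generated by $B_n$ as its closed unit ball is canonically isometric to the Banach dual $(\widehat E_{U_n})'$, and the strong topology on $E'$ agrees with the locally convex inductive limit topology of the sequence $(E'_{B_n})_{n \geq 1}$. Since the transpose of a nuclear operator between Banach spaces is again nuclear (a standard fact about trace class factorizations), the bonding maps $E'_{B_n} \to E'_{B_{n+1}}$ inherit nuclearity from the projective system on $E$.

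\textbf{Conclusion.} At this point I would invoke the general principle that a countable locally convex inductive limit of Banach spaces with nuclear bonding maps is itself nuclear: given an absolutely convex $0$-neighborhood $V$ in $E'_\beta$, one must exhibit a smaller $0$-neighborhood $W \subset V$ such that the canonical map $(E'_\beta)_W \to (E'_\beta)_V$ admits a summable series representation. This is built by patching together, across the index $n$, the summable representations of the nuclear bonding maps $E'_{B_n} \to E'_{B_{n+1}}$, exploiting that the restrictions $V \cap E'_{B_n}$ are $0$-neighborhoods in each step.

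\textbf{Main obstacle.} The genuinely delicate point is this last step, because the $0$-neighborhood basis of an $(LB)$-space consists of arbitrary absolutely convex sets meeting each step in a $0$-neighborhood, and such sets are not themselves polars of bounded sets in a transparent way. Patching the summable factorizations uniformly in $n$ requires either the gluing sketched above or a detour through Hilbert--Schmidt factorizations and tensor product representations as in Pietsch's treatment; this auxiliary result is where the proof's technical weight sits, whereas Steps 1--2 are essentially bookkeeping once the projective--inductive duality is set up correctly.
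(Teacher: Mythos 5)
Before assessing the argument itself, note that the paper offers no proof of this proposition: it is imported verbatim as Theorem 9.6 of Schaefer's book, so your attempt can only be measured against the standard textbook argument, of which your outline is the usual skeleton (Grothendieck--Pietsch projective representation with nuclear linking maps, transposition to an inductive spectrum of dual Banach spaces). As a proof, however, it has a genuine gap exactly where you locate the ``technical weight'': the principle that a countable locally convex inductive limit of Banach spaces with nuclear bonding maps is nuclear is invoked, not proved. That principle \emph{is} the theorem (it is essentially the statement that DFN-spaces are nuclear), and the ``patching'' you sketch does not engage the real difficulty: a $0$-neighborhood of the inductive limit is of the form $\Gamma\left(\bigcup_n \varepsilon_n B_n\right)$ (an absolutely convex hull mixing all steps), and its associated local Banach space is not the dual of anything transparent, so the summable factorizations of the individual bonding maps do not concatenate in any obvious way; one needs the Hilbert--Schmidt/trace-class factorization machinery (or a sequence-space representation) to carry this out. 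As written, your argument reduces the proposition to another nontrivial theorem rather than proving it.

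There is a second, smaller but real gap in the dualization step. Barrelledness of $E$ only gives that the polars $B_n=U_n^{\circ}$ form a fundamental sequence of strongly bounded sets, i.e.\ that $E'_\beta$ is a (DF)-space; it does \emph{not} give that $\beta(E',E)$ coincides with the inductive limit topology of the spaces $E'_{B_n}$. In general the inductive topology is strictly finer, and equality holds precisely when $E$ is distinguished; here you must use that a nuclear Fr\'echet space is Montel, hence reflexive and distinguished. This is not a cosmetic point: nuclearity does not descend to coarser topologies (an infinite-dimensional normed topology dominated by a nuclear one is never nuclear), so establishing nuclearity of the inductive-limit topology would not by itself yield nuclearity of the strong dual without this identification. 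Both gaps are repairable along standard lines (Montel $\Rightarrow$ distinguished; Pietsch's treatment of the inductive-limit step), but in the present form the proof is an outline of the classical argument with its crux assumed.
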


	\subsection{The Space  $\Spc S(\Z^d)$}\label{spaceS}

We recall that $\Spc S(\Z^d)$ is the space of rapidly decreasing signals (see Definition \ref{Def:signals}).
We consider the weights $$w_n[\bk]=(1+\|\bk\|)^{n}$$
with $n \in \Z$. When $n \geq 0$,  $w_n[\cdot]$ is algebraically increasing, submultiplicative, and satisfies the GRS condition.
The definition given by \eqref{eq:S} is equivalent to  
\begin{equation} \label{eq:Sinfty}
\Spc S(\Z^d)=\bigcap_{n \in \N} \ell_{\infty,w_n}(\Z^d).
\end{equation}

For $1 \leq p \leq \infty$ and $n\in \N$, one can readily show that 
\begin{align*}
\forall a \in \ell_{p,w_n}(\Z^d),  & \quad \|a\|_{\ell_{\infty,w_n}(\Z^d)} \le  \|a\|_{\ell_{p,w_n}(\Z^d)}  \\
\forall a \in \ell_{\infty,w_n}(\Z^d), & \quad  \|a\|_{\ell_{p,w_{ \lfloor n-(d/p)-\epsilon \rfloor }}(\Z^d)} \le C \|a\|_{\ell_{\infty,w_n}(\Z^d)}  
\end{align*}
where $\epsilon >0$ and for some constant $C > 0$. 
Hence, for $1 \leq p \leq \infty$ and $n \in \N$ fixed, we have the embedding relations 

\begin{equation} \label{eq:embellp}
 \ell_{p,w_n}(\Z^d) \subseteq \ell_{\infty,w_n}(\Z^d) \subseteq \ell_{p,w_{\lfloor n-(d/p)-\epsilon \rfloor}}(\Z^d).
\end{equation}
Combining \eqref{eq:Sinfty} and \eqref{eq:embellp}, we obtain the more generic characterization
\begin{align}
\label{Eq:Sintersect}
\Spc S(\Z^d)=\bigcap_{n \in \N} \ell_{p,w_n}(\Z^d)
\end{align}
that holds for any $1\leq p \leq \infty$. 

The space $\Spc S(\Z^d)$ is a nuclear Fr\'echet space \cite[Theorem 51.5]{Treves2006}.  Considering \eqref{Eq:Sintersect} with $p=1$, we can actually apply Proposition \ref{prop:nuclearcriterion} with $m = n+d+1$ to deduce the nuclearity of $\Spc S(\Z^d)$ since
$$\sum_{\bm{k}\in\Z^d} \frac{w_n[\bm{k}]}{w_{n+d+1} [\bm{k}]} = \sum_{\bm{k}\in\Z^d} \frac{1}{(1 + \lVert \bm{k} \rVert)^{d+1}} < \infty.$$

The dual counterpart of the representation  \eqref{Eq:Sintersect}  is
$$\Spc S'(\Z^d)=\left(\Spc S(\Z^d)\right)'=\bigcup_{n \in \N} \ell'_{p,w_n}(\Z^d)=\bigcup_{n \in \N} \ell_{q,1/w_n}(\Z^d)$$
where we recall that
$\left(\ell_{p,w_n}(\Z^d)\right)'=\ell_{q,1/w_n}(\Z^d)$ with $\frac{1}{p}+\frac{1}{q}=1$ and $1\leq p<\infty$.
Since $\Spc S'(\Z^d)$ is the strong dual of a nuclear Fr\'echet space, it is nuclear as well (but not Fr\'echet), according to Proposition \ref{prop:dualnuclear}.

In view of \eqref{Eq:Sintersect} with $p=1$, we now propose to take Theorems  \ref{Prop:algebra1} and \ref{Theo:Wienerweight}  with $w=w_n$ to the limit as $n\to \infty$, which allows us to deduce the following.

\begin{theorem}
\label{S:algebra1}
The space $\Spc S (\Z^d)$ is an inverse-closed nuclear convolution algebra.
\end{theorem}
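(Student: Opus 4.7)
The nuclearity of $\Spc S(\Z^d)$ has already been verified via Proposition \ref{prop:nuclearcriterion} in the discussion preceding the statement, so the plan is to establish the two remaining points: $\Spc S(\Z^d)$ is a convolution algebra and it is inverse-closed. The strategy is to transfer both properties from each weighted Banach space $\ell_{1,w_n}(\Z^d)$ in the intersection representation
\[
\Spc S(\Z^d) = \bigcap_{n \in \N} \ell_{1,w_n}(\Z^d), \qquad w_n[\bk] = (1+\|\bk\|)^n,
\]
obtained from \eqref{Eq:Sintersect} with $p=1$.

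The first preparatory step is to check that each $w_n$ (for $n \geq 0$) is submultiplicative and satisfies the GRS condition. Submultiplicativity follows from the triangle inequality: $1+\|\bk+\V l\| \leq (1+\|\bk\|)(1+\|\V l\|)$, which yields $w_n[\bk+\V l] \leq w_n[\bk]\,w_n[\V l]$. The GRS property is immediate since $w_n[m\bk]^{1/m} = (1+m\|\bk\|)^{n/m} \to 1$ as $m\to\infty$. With these ingredients, Theorem~\ref{Prop:algebra1} and Theorem~\ref{Theo:Wienerweight} apply individually at every level $n$.

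For the algebra structure, take $a, h \in \Spc S(\Z^d)$. Since $a,h \in \ell_{1,w_n}(\Z^d)$ for every $n$, Theorem~\ref{Prop:algebra1} gives the weighted Young inequality
\[
\|a*h\|_{\ell_{1,w_n}(\Z^d)} \leq \|a\|_{\ell_{1,w_n}(\Z^d)} \|h\|_{\ell_{1,w_n}(\Z^d)}, \qquad \forall n\in\N,
\]
so $a*h \in \bigcap_n \ell_{1,w_n}(\Z^d) = \Spc S(\Z^d)$. The same inequality, valid simultaneously in every defining seminorm, expresses the continuity of convolution as a bilinear map from $\Spc S(\Z^d) \times \Spc S(\Z^d)$ into $\Spc S(\Z^d)$ in the projective-limit topology. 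That $\delta[\cdot] \in \Spc S(\Z^d)$ is trivial, so the three axioms of a convolution algebra hold.

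For inverse-closedness, let $h \in \Spc S(\Z^d) \subseteq \ell_1(\Z^d)$ be $\ell_2$-invertible. Because $h \in \ell_1(\Z^d)$, its frequency response $\hat h$ is continuous on $\T^d$; Plancherel combined with Definition~\ref{Def:invertibity} translates the $\ell_2$-invertibility into the pointwise estimate $|\hat h(\bw)| \geq A > 0$ for every $\bw \in \T^d$, hence $\hat h$ is nowhere vanishing. Since $h \in \ell_{1,w_n}(\Z^d)$ for every $n$ and each $w_n$ is submultiplicative and GRS, Proposition~\ref{prop:Wienerweight} yields $g = \Fourier_{\rm d}^{-1}\{1/\hat h\} \in \ell_{1,w_n}(\Z^d)$ for each $n$. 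Therefore $g \in \bigcap_n \ell_{1,w_n}(\Z^d) = \Spc S(\Z^d)$, and $g*h = h*g = \delta[\cdot]$ holds in $\Spc S'(\Z^d)$ as a consequence of the identity $\hat g \hat h = 1$ in $L_\infty(\T^d)$.

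The argument is essentially bookkeeping: every nontrivial analytical step is already contained in Theorems~\ref{Prop:algebra1} and~\ref{Theo:Wienerweight}, and the projective-limit description \eqref{Eq:Sintersect} makes the passage to $\Spc S(\Z^d)$ automatic. The only mildly subtle point is noting that $\ell_2$-invertibility forces $\hat h$ to be bounded away from zero---not merely nonvanishing---but this follows at once from continuity of $\hat h$ and Plancherel's theorem.
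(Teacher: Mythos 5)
Your proposal is correct and follows essentially the same route as the paper, which deduces the theorem by applying Theorems \ref{Prop:algebra1} and \ref{Theo:Wienerweight} (equivalently Proposition \ref{prop:Wienerweight}) with $w=w_n$ at every level of the intersection \eqref{Eq:Sintersect} with $p=1$; your write-up just fills in the bookkeeping (submultiplicativity and GRS of $w_n$, continuity in the projective topology, and the lower bound on $\lvert\hat h\rvert$ from $\ell_2$-invertibility). The paper also remarks that the Banach machinery can be bypassed entirely via the equivalence $a\in\Spc S(\Z^d)\Leftrightarrow\hat a\in C^\infty(\mathbb{T}^d)$, but that is offered only as a simpler alternative, not a correction to your argument.
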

There is actually no need here to invoke the full Banach-space machinery (Young's inequality and Wiener's lemma) because there is a simpler direct proof of Theorem \ref{S:algebra1}.  Indeed, it is well known that the discrete Fourier transform of a rapidly decreasing sequence is $2 \pi$-periodic and infinitely differentiable and vice versa; \emph{i.e.}, $a \in \Spc S(\Z^d)$ if and only if $\hat a \in C^\infty(\mathbb{T}^d)$. Now, the product of two $C^\infty$ functions is
$C^\infty$ as well, so that $h \ast a \in \Spc S(\Z^d)$ if and only if $\widehat{h \ast a}\in C^\infty(\mathbb{T}^d)$. Likewise, if $\hat h \in C^\infty(\mathbb{T}^d)$ with $\hat h(\bw)\ne 0$ for all $\bw \in \mathbb{T}^d$, then $\hat g=1/\hat h \in C^\infty(\mathbb{T}^d)$, which proves the inverse-closedness.

	\subsection{The Space $\Spc E(\Z^d)$} \label{spaceE}

\paragraph{Nuclearity of $\Spc E(\Z^d)$.}
Contrary to $\Spc S(\Z^d)$, the space of exponentially decreasing sequences $\Spc E(\Z^d)$ is not a countable intersection of Banach spaces; that is, not a Fr\'echet space. However, we have the following:

\begin{proposition}
\label{E:space}
Let $1 \leq p \leq \infty$.
$\Spc E (\Z^d)$ is a nuclear space that can be specified as the inductive limit
\begin{align}
\label{Eq:Eunion}
\Spc E(\Z^d)
=\bigcup_{n \in \N } \ell_{p,v_{n}}(\Z^d)
\end{align}
with weighting sequence $v_n[\bk]=\ee^{\lvert \bk \rvert / (n+1)}$, which is exponentially increasing and submultiplicative.
\end{proposition}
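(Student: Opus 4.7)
My plan is to prove the three assertions in sequence: (a) $v_n$ is submultiplicative and exponentially increasing; (b) the $p$-invariant identity $\bigcup_{n} \ell_{p, v_n}(\Z^d) = \Spc E(\Z^d)$; and (c) the nuclearity of $\Spc E(\Z^d)$, obtained by realizing it as the strong dual of a nuclear Fr\'echet space so that Proposition \ref{prop:dualnuclear} applies.

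Points (a) and (b) are the routine parts. Submultiplicativity follows from the triangle inequality $|\bk+\bl|\leq|\bk|+|\bl|$ applied inside the exponential, and the exponential growth is plain from the definition. For (b), I mimic the strategy used for $\Spc S(\Z^d)$ in Section \ref{spaceS}, namely the sandwich $\ell_{1, v_n} \subseteq \ell_{p, v_n} \subseteq \ell_{\infty, v_n} \subseteq \ell_{1, v_m}$ for any $m > n$, where the only non-trivial step is the last one. Given $|a[\bk]| \leq C \ee^{-|\bk|/(n+1)}$, one has
\begin{equation*}
	\sum_{\bk \in \Z^d} |a[\bk]|\, v_m[\bk] \leq C \sum_{\bk \in \Z^d} \ee^{|\bk|(1/(m+1) - 1/(n+1))} < \infty,
\end{equation*}
since the exponent is strictly negative. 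Consequently, all the $\bigcup_{n} \ell_{p, v_n}$ coincide as sets. The resulting common set is the union over $n$ of sequences decaying exponentially at rate $1/(n+1)$, which by Definition \ref{Def:signals} is exactly $\Spc E(\Z^d)$.

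For (c), I introduce the candidate predual $Y = \bigcap_{n \in \N} \ell_{1, 1/v_n}(\Z^d)$. The weights $1/v_n[\bk] = \ee^{-|\bk|/(n+1)}$ are positive and increasing in $n$, so $Y$ is Fr\'echet, and the criterion of Proposition \ref{prop:nuclearcriterion} holds with $m = n+1$:
\begin{equation*}
	\sum_{\bk \in \Z^d} \frac{(1/v_n)[\bk]}{(1/v_{n+1})[\bk]} = \sum_{\bk \in \Z^d} \ee^{-|\bk|/((n+1)(n+2))} < \infty,
\end{equation*}
making $Y$ a nuclear Fr\'echet space. Using the standard sequence-space duality $(\ell_{1, 1/v_n})' = \ell_{\infty, v_n}$ together with the identification of the strong dual of a countable projective limit of Banach spaces with the inductive limit of the duals, I obtain $Y' = \bigcup_{n} \ell_{\infty, v_n}$, which equals $\Spc E(\Z^d)$ by (b). Proposition \ref{prop:dualnuclear} then delivers the nuclearity of $\Spc E(\Z^d)$. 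The most delicate step will be precisely this topological identification $Y' = \Spc E(\Z^d)$: the set-level equality is clear from (b), but matching the strong-dual topology on $Y'$ with the inductive-limit topology on $\Spc E(\Z^d)$ relies on standard K\"othe-duality facts for projective-inductive systems that must be invoked carefully.
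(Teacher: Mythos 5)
Your proposal is correct and follows essentially the same route as the paper: the same interlacing embeddings $\ell_{p,v_n}\subseteq\ell_{\infty,v_n}\subseteq\ell_{1,v_m}$ (resp. $\ell_{p,v_{n+1}}$) to get the $p$-independence, the same predual $\bigcap_{n}\ell_{1,1/v_n}(\Z^d)$ shown nuclear via Proposition \ref{prop:nuclearcriterion} with $m=n+1$, and the same appeal to Proposition \ref{prop:dualnuclear} after identifying the strong dual with $\Spc E(\Z^d)$ (the paper writes the dual as $\bigcup_n\ell_{1,v_n}$ rather than $\bigcup_n\ell_{\infty,v_n}$, which is the same space). Your explicit flagging of the dual/inductive-limit identification is a point the paper also passes over with the same brevity, so there is nothing missing relative to its argument.
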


\begin{proof}
We can rewrite \eqref{eq:E(Z)} as $ \Spc E(\Z^d)=\bigcup_{n \in \N\backslash \{0\}} \ell_{\infty,v_{n}}(\Z^d)$. Moreover, we have the embeddings
\begin{equation} 
	\ell_{p,v_n} (\Z^d) \subseteq \ell_{\infty,v_n} (\Z^d) \subseteq \ell_{p,v_{n+1}} (\Z^d),
\end{equation}
which shows that \eqref{Eq:Eunion} is true for every $1\leq p \leq \infty$. We consider the case $p=1$ for the rest of the proof.

We now consider  the Fr\'echet space $\Spc V(\Z^d) = \bigcap_{n \in \mathbb{N}} \ell_{1, 1/v_n}(\Z^d)$, endowed with the projective topology. We apply Proposition \ref{prop:nuclearcriterion} with $m = n+1$, for which
$$\sum_{\bm{k}\in\Z^d} \frac{1/v_n[\bm{k}]}{1 / v_{n+1} [\bm{k}]} = \sum_{\bm{k}\in\Z^d} \ee^{ - \frac{|\bm{k}|}{n(n+1)}}< \infty,$$
and the space $\Spc V(\Z^d)$ is therefore nuclear.
Since $\Spc V'(\Z^d) = \bigcup_{n \in \N\backslash \{0\}} \ell_{1,v_{n}}(\Z^d) = \Spc E(\Z^d)$, the space $\Spc E(\Z^d)$ is the strong dual of a nuclear Fr\'echet space and hence nuclear because of Proposition \ref{prop:dualnuclear}. 
\end{proof}

\paragraph{The space $\Spc E(\Z^d)$ as an inverse-closed convolution algebra.} 

\begin{theorem}
\label{E:algebra1}
The space $\Spc E(\Z^d)$ is an inverse-closed nuclear convolution algebra.
\end{theorem}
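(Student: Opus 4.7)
The plan is to reduce the theorem to a Paley--Wiener-type characterization that mirrors the $C^\infty$ description of $\Spc S(\Z^d)$ exploited right after Theorem \ref{S:algebra1}. Since Proposition \ref{E:space} already gives nuclearity, what remains is to show (i) closedness under convolution with continuity, and (ii) inverse-closedness.

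First I would establish the following equivalence: $a \in \Spc E(\Z^d)$ if and only if its discrete Fourier transform $\hat a$, viewed as a $2\pi$-periodic function on $\R^d$, extends holomorphically to some complex strip $\{\bw + \jj \V y : \bw \in \R^d,\ \V y \in \R^d,\ \|\V y\|_\infty < r\}$ with $r > 0$. The forward direction follows from the absolute convergence of the series $\sum_{\bk} a[\bk] \ee^{-\jj \langle \bw + \jj \V y, \bk\rangle}$ whenever $|a[\bk]| \le C \ee^{-r' |\bk|}$ and $\|\V y\|_\infty < r'$. The reverse direction is obtained by shifting the contour in the inversion formula $a[\bk] = (2\pi)^{-d}\int_{\T^d} \hat a(\bw) \ee^{\jj \langle \bw, \bk\rangle}\, \dd \bw$ by $\jj \V y$ with $\V y$ chosen componentwise as $r'\,\mathrm{sign}(\bk)$ for any $r' < r$, which produces the exponential bound $|a[\bk]| \le C' \ee^{-r'|\bk|}$.

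With this characterization in place, closedness under convolution is transparent: if $\hat a$ and $\hat b$ extend holomorphically to strips of widths $r_a$ and $r_b$, then $\widehat{a * b} = \hat a \cdot \hat b$ extends holomorphically to a strip of width $\min(r_a, r_b)$, placing $a * b$ in $\Spc E(\Z^d)$. For continuity of the bilinear convolution, I would invoke Theorem \ref{Prop:algebra1}: the exponential weight $v_n[\bk] = \ee^{|\bk|/(n+1)}$ is submultiplicative, so each $\ell_{1,v_n}(\Z^d)$ is a Banach convolution algebra satisfying the sharp estimate $\|a * b\|_{\ell_{1,v_n}(\Z^d)} \le \|a\|_{\ell_{1,v_n}(\Z^d)} \|b\|_{\ell_{1,v_n}(\Z^d)}$; a uniform bound of this kind along the defining ladder transfers continuity of $*$ to the inductive limit $\Spc E(\Z^d)$ in \eqref{Eq:Eunion}.

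For inverse-closedness, suppose $h \in \Spc E(\Z^d)$ is $\ell_2$-invertible. Since $h \in \ell_1(\Z^d)$, $\hat h$ is continuous on $\T^d$, and $\ell_2$-invertibility is equivalent to $\hat h(\bw) \ne 0$ for every $\bw \in \T^d$. By the characterization, $\hat h$ extends holomorphically to some strip of width $r$. Compactness of $\T^d$, continuity of the extension, and non-vanishing on $\T^d$ then force the existence of some $r' \in (0,r)$ on whose corresponding strip $\hat h$ still does not vanish. Hence $1/\hat h$ is holomorphic on that smaller strip, and applying the characterization in reverse delivers $g = \Fourier_{\rm d}^{-1}\{1/\hat h\} \in \Spc E(\Z^d)$.

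The main technical hurdle is the careful bookkeeping in the Paley--Wiener step, especially in matching the $\ell_1$-type exponential weight $\ee^{|\bk|/(n+1)}$ used in Proposition \ref{E:space} with the $\ell_\infty$-width $\|\V y\|_\infty < r$ of the strip of holomorphy. Once this norm duality is handled, everything else reduces to standard holomorphic function theory on strips.
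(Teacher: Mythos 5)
Your argument is correct, and its overall strategy coincides with the paper's: nuclearity is delegated to Proposition \ref{E:space}, and the algebra and inversion claims are reduced to a Fourier-domain characterization of $\Spc E(\Z^d)$ under which convolution becomes pointwise multiplication. The difference lies in how that enabling characterization is established and exploited. The paper works with real analyticity of $\hat a$ on $\T^d$ (Theorem \ref{Theo:realanalytic}), proved in the Appendix via the derivative-growth criterion of Proposition \ref{Prop:realanal} together with the combinatorial estimate of Lemma \ref{lem:abel_1d}; inverse-closedness then follows from the purely local fact that $1/f$ is analytic wherever an analytic $f$ does not vanish. You instead prove the equivalent Paley--Wiener statement---holomorphic extension of $\hat a$ to a complex strip---directly: absolute convergence of the series gives one implication, and a contour shift with $y_j = r'\,\mathrm{sign}(k_j)$ gives the other, which bypasses the factorial bookkeeping entirely and is arguably more economical. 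The price is that your inversion step becomes global: you need compactness of $\T^d$ and continuity of the extension to guarantee that non-vanishing on the torus persists on a thinner strip, a point you handle correctly. Your treatment of continuity of the bilinear convolution (Young's inequality on each rung $\ell_{1,v_n}(\Z^d)$ via Theorem \ref{Prop:algebra1}, then passage to the inductive limit \eqref{Eq:Eunion}) is at the same level of brevity as the paper's own, which likewise leans on the weighted Banach algebra structure of the rungs; a fully rigorous version would invoke the standard fact that separately continuous bilinear maps on such inductive limits (duals of Fr\'echet spaces) are continuous, but this is a shared shortcut rather than a defect specific to your route.
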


In \cite{Jaffard1990proprietes}, Jaffard established that $\Spc E(\Z^d)$ is stable by convolution (Proposition 1), and inverse-closed (Proposition 2). His work is actually more general since it covers infinite matrices that are dominated by convolution kernels (the latter being a special case of the former). The novel aspect brought by Theorem \ref{E:algebra1} is the nuclearity of $\Spc E(\Z^d)$, proved in Proposition \ref{E:space}. 
We are going to present a very short alternative proof of the inverse-closedness and algebra part of the statement for the case of convolution operators.
The enabling property, which will also play a central role in Section \ref{sec:singular}, is the following characterization of the Fourier transform of a sequence in $\Spc E(\Z^d)$.

\paragraph{Fourier transform of exponentially decreasing sequences.}

\begin{definition}
A function $f: \T^d \rightarrow \R$ is real analytic at $\bx_0$ if it can be represented by a convergent power series of the form
$$f(\bx)=\sum_{\V n\in \N^d} a_{\V n} (\bx-\bx_0)^{\V n}$$ in some open neighbourhood of $\bx_0$.
A function is real analytic on $\T^d$ if it is real analytic at every $\bm{x}_0 \in \T^d$.
\end{definition}

The space of exponentially decreasing sequence is in correspondence with the space of real analytic periodic functions. 
 
\begin{theorem}
\label{Theo:realanalytic}
A discrete signal $u[\cdot]$ is exponentially decreasing---\emph{i.e.}, $u \in \Spc E(\Z^d)$---if and only if its discrete-domain Fourier transform $\hat u$ is real analytic on $\mathbb{T}^d$.
\end{theorem}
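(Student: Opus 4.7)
The plan is to prove this as a multidimensional Bernstein theorem, or equivalently as a torus version of the Paley--Wiener theorem: exponential decay of the coefficients $u[\bk]$ should be equivalent to the existence of a holomorphic extension of $\hat u$ to a complex tube around $\T^d$, and such an extension is equivalent, thanks to the compactness of $\T^d$, to real analyticity on $\T^d$.

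For the necessity direction, I would assume $u \in \Spc E(\Z^d)$, so that $|u[\bk]| \leq C \ee^{-r|\bk|}$ for some $C, r > 0$. Writing $\bz = \bw + \jj {\boldsymbol y} \in \C^d$, the pointwise estimate
$$\bigl|u[\bk]\, \ee^{-\jj \langle \bz, \bk\rangle}\bigr| = |u[\bk]|\, \ee^{\langle {\boldsymbol y}, \bk\rangle} \leq C \ee^{(\|{\boldsymbol y}\|_\infty - r)|\bk|}$$
shows that the series defining $\hat u$ converges absolutely and uniformly on every tube $\T^d + \jj\{\|{\boldsymbol y}\|_\infty \leq r'\}$ with $r' < r$. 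This produces a holomorphic extension of $\hat u$ and hence makes $\hat u$ real analytic on $\T^d$.

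For the converse, I would first upgrade the pointwise notion of real analyticity to a uniform strip of holomorphy: at each $\bx_0 \in \T^d$ the real power series has a positive polyradius, so $\hat u$ extends to a holomorphic function on some complex ball around $\bx_0$. A finite subcover of $\T^d$ together with uniqueness of analytic continuation then patches these extensions into a single holomorphic function on a tube $\T^d + \jj(-\eta,\eta)^d$ for some $\eta > 0$, bounded by some $M$ on the closed sub-tube of half-width $\eta' < \eta$. Starting from Fourier inversion,
$$u[\bk] = \frac{1}{(2\pi)^d} \int_{\T^d} \hat u(\bw)\, \ee^{\jj \langle \bw, \bk\rangle}\, \dd \bw,$$
I would shift each contour $w_j \mapsto w_j + \jj \eta'\, \mathrm{sign}(k_j)$; this is legitimate because the integrand is holomorphic and $2\pi$-periodic in each $w_j$, so the contributions from opposite faces of the tube cancel. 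Setting $\sigma(\bk) = (\mathrm{sign}(k_1),\ldots,\mathrm{sign}(k_d))$, the resulting identity
$$u[\bk] = \frac{\ee^{-\eta'|\bk|}}{(2\pi)^d} \int_{\T^d} \hat u\bigl(\bw + \jj \eta' \sigma(\bk)\bigr)\, \ee^{\jj \langle \bw, \bk\rangle}\, \dd \bw$$
immediately yields $|u[\bk]| \leq M \ee^{-\eta'|\bk|}$, and therefore $u \in \Spc E(\Z^d)$.

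The main obstacle is this first step of the converse: turning the pointwise, local definition of real analyticity into a holomorphic extension over a uniform complex tube. Compactness of $\T^d$ makes this essentially automatic, but the patching argument must be spelled out carefully. Once that is settled, the contour deformation is a routine application of Cauchy's theorem exploiting the $2\pi$-periodicity, and the exponential decay falls out of the chosen imaginary shift.
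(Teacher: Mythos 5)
Your proposal is correct, but it follows a genuinely different route from the paper. The paper never complexifies: it characterizes real analyticity through derivative growth (Proposition \ref{Prop:realanal}, i.e.\ $\sup_{\bw}|\partial^{\V n}\hat u(\bw)|\le C\,\V n!/R^{|\V n|}$), proves the forward direction by bounding $\sum_{k\ge 0}k^{n}\ee^{-ck}$ with an explicit induction via Abel summation (Lemma \ref{lem:abel_1d}), and proves the converse by estimating $|\bk^{\V n}u[\bk]|$ through $\hat u^{(\V n)}$ and then optimizing over the multi-index with the choice $\V n=\lfloor\alpha\bk\rfloor$, which releases the decay $\ee^{-\epsilon|\bk|}$. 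You instead take the Paley--Wiener/Bernstein route: exponential decay gives absolute, uniform convergence of the series on a complex tube $\{\|\imag \bz\|_\infty\le r'\}$, hence a holomorphic extension and real analyticity; conversely, pointwise real analyticity on the compact torus is upgraded (finite cover plus the identity theorem on the totally real slice $\R^d$) to holomorphy on a uniform tube, and shifting each contour by $\jj\eta'\,\mathrm{sign}(k_j)$ in the inversion integral yields $|u[\bk]|\le M\ee^{-\eta'|\bk|}$; the sign convention, the cancellation of the lateral faces by $2\pi$-periodicity, and the case $k_j=0$ all check out. What each approach buys: yours avoids the factorial bookkeeping and the multi-index optimization entirely, and it makes the decay rate transparently equal to the width of the tube of holomorphy, at the price of the patching step you rightly flag (local polyradii to a uniform strip) and of working with several complex variables; the paper's argument stays self-contained on the real domain, with the only imported ingredient being the Krantz characterization of real analyticity by derivative bounds. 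Both proofs are complete and rigorous once your patching step is written out.
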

Theorem \ref{Theo:realanalytic} is probably known but we have not been able to find a complete proof in the literature, which is the reason why we are providing our own in Appendix 1. The 1-D version of the equivalence is mentioned in 
 \cite[p. 134]{Krantz2002} as a percursor of the Paley-Wiener theorem,
and listed as an exercise in \cite[p. 27]{Katznelson:1976}.

\begin{proof}[Proof of Theorem \ref{E:algebra1}]
By Theorem \ref{Theo:realanalytic}, the algebra part of the statement is equivalent to $\hat h(\bw)  \hat a(\bw)$ and $1/\hat h(\bw)$ being analytic, which are basic properties in the theory of convergent power series. For instance, if $f$ is analytic at $\bx_0$ with $f(\bx_0)\ne0$, then
$1/f$ is analytic at $\bx_0$.  
\end{proof}

\section{Hierarchy of Inverse-closed Convolution Algebras} \label{section:main}
 
	\subsection{Classification of Convolution Algebras}
 
 \begin{definition} \label{def:types}
We consider three types of sequence spaces that are candidates for being inverse-closed convolution algebras:
\begin{itemize}
	\item \emph{Type I:} Banach spaces of the form $\ell_{1,w}(\Z^d)$ with $w$ a submultiplicative weight;
	\item \emph{Type II:} countable projective limit of Banach spaces (also called Fr\'echet spaces)
			\begin{equation} \label{eq:intersection}
				\bigcap_{n\in \N} \ell_{1,w_n}(\Z^d)
			\end{equation}
		where $(w_n)$ is a family of submultiplicative weighs such that $w_n \leq w_{n+1}$; 
	\item \emph{Type III:} countable inductive limit of Banach spaces
			\begin{equation} \label{eq:union}
				\bigcup_{n\in \N} \ell_{1,w_n}(\Z^d)
			\end{equation}
		where $(w_n)$ is a family of submultiplicative weighs such that $w_{n+1} \leq w_n$.
		 \end{itemize}
\end{definition}

Sequence spaces of types I, II, or III are convolution algebras, as intersections or unions of convolution algebras (the weights are assumed to be submultiplicative). We therefore call them \emph{convolution algebras of type I}, \emph{II}, or \emph{III}.
The typical example of convolution algebra of type II is the space $\Spc S(\Z^d)$. Moreover, we have seen that the space $\Spc E(\Z^d)$ is of type III. Note that the conditions of having increasing weights for the type II  is not restrictive: if this condition is not satisfied, it suffices to consider the family of weights $(w_0 + \ldots + w_n)_{n\in\N}$. Similar considerations can be done for the type III. 

The types specified by \eqref{eq:intersection} and \eqref{eq:union} are mutually exclusive in the following sense:

\begin{proposition}
	If $\Spc X$ is simultaneously a convolution algebra of type II and III, it is necessarily a Banach convolution algebra of type I. 
\end{proposition}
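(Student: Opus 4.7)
The plan is to apply Baire's category theorem to $\Spc X$ viewed with its type II (projective) Fr\'echet topology, using the type III representation as a countable cover by Banach subspaces. Write $\Spc X = \bigcap_{n\in\N} \ell_{1,w_n}(\Z^d) = \bigcup_{m\in\N} \ell_{1,v_m}(\Z^d)$ and let $B_m = \{a : \|a\|_{\ell_{1,v_m}(\Z^d)} \le 1\}$ denote the closed unit ball of the $m$-th Banach piece. The goal is to show that $\Spc X = \ell_{1,v_{m_0}}(\Z^d)$ as topological vector spaces for some $m_0$, which is exactly a Banach convolution algebra of type I.

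The critical preliminary step is to verify that each $B_m$ is closed in $\Spc X$ for the projective topology. Since $\Spc X$ continuously embeds into $\ell_{1,w_0}(\Z^d)$, Fr\'echet convergence in $\Spc X$ is at least as strong as $\ell_{1,w_0}$-convergence and therefore forces pointwise convergence of the sequences. Hence if $(a_j)_j \subset B_m$ converges to $a \in \Spc X$ in the Fr\'echet topology, Fatou's lemma yields
\begin{equation*}
\sum_{\bk \in \Z^d} v_m[\bk]\,|a[\bk]| \;\le\; \liminf_{j\to\infty} \|a_j\|_{\ell_{1,v_m}(\Z^d)} \;\le\; 1,
\end{equation*}
so $a \in B_m$.

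Next I would apply Baire's theorem to the decomposition $\Spc X = \bigcup_{m,k\in\N} k B_m$, which presents the Fr\'echet (hence Baire) space $\Spc X$ as a countable union of closed absolutely convex sets. Some $k_0 B_{m_0}$ must then have nonempty interior; being absolutely convex, the standard averaging trick (if an open set $U$ is contained in $k_0 B_{m_0}$, then $\tfrac12(U-U) \subseteq k_0 B_{m_0}$) produces a neighborhood $V$ of the origin in $\Spc X$ with $V \subseteq \ell_{1,v_{m_0}}(\Z^d)$. Because $V$ is absorbing, every $a \in \Spc X$ satisfies $\lambda a \in V$ for sufficiently small $\lambda > 0$, whence $a \in \ell_{1,v_{m_0}}(\Z^d)$. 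Combined with the reverse inclusion provided by the type III representation, this gives the set-theoretic equality $\Spc X = \ell_{1,v_{m_0}}(\Z^d)$, and the open mapping theorem for Fr\'echet spaces forces the two topologies to coincide.

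The hardest part is confirming the closedness of $B_m$ in the projective topology; once that is in place, the Baire step and the convex-neighborhood argument are routine. Conceptually, this is the sequence-space incarnation of Grothendieck's classical theorem that any Fr\'echet space that is simultaneously an LB-space must already be Banach.
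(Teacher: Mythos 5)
Your proof is correct, but it follows a genuinely different route from the paper. The paper argues by duality and metrizability: $\Spc X$, being of type II, is a Fr\'echet space and hence metrizable; being of type III, its strong dual is again a Fr\'echet space, hence metrizable; and since the strong dual of a non-normable metrizable locally convex space is never metrizable (K\"othe, Section 29.1), $\Spc X$ must be normable and therefore Banach. Your argument is instead the concrete sequence-space version of Grothendieck's factorization theorem, exactly as you observe at the end: Baire category applied to the Fr\'echet (type II) topology, with the decomposition $\Spc X=\bigcup_{m,k\in\N} k B_m$ into sets whose closedness you verify by hand via pointwise convergence and Fatou. What your route buys is, first, that it only uses the set-theoretic identity $\Spc X=\bigcup_{m}\ell_{1,v_m}(\Z^d)$, with no appeal to the inductive topology or to duality theory, and, second, that it identifies $\Spc X$ explicitly with one step $\ell_{1,v_{m_0}}(\Z^d)$ of the inductive spectrum, which makes the ``type I'' part of the conclusion (a weighted $\ell_1$ space with submultiplicative weight) completely explicit, whereas the paper's proof only establishes normability of $\Spc X$. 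The paper's route is shorter and works at the level of abstract locally convex spaces, at the price of invoking the duality statement for type III spaces. One small point to tighten in your write-up: before invoking the open mapping theorem you should note that the inclusion $V\subseteq k_0 B_{m_0}$ already shows that $\lVert\cdot\rVert_{\ell_{1,v_{m_0}}(\Z^d)}$ is bounded on a Fr\'echet neighborhood of the origin, i.e., the identity from $\Spc X$ with its Fr\'echet topology to $\ell_{1,v_{m_0}}(\Z^d)$ is continuous; the open mapping (or closed graph) theorem between Fr\'echet spaces then upgrades this continuous bijection to the topological isomorphism you assert.
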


\begin{proof}
First of all, as a Fr\'echet space, it is metrizable. Being of type III, its dual is itself a Fr\'echet space, and is also metrizable. Moreover, the dual of a non-normable locally convex space is not metrizable \cite[Section 29.1]{Kothe1969topological}. Hence, if $\Spc X$ is not normable, its dual is not metrizable. This shows that $\Spc X$ is normable, and is therefore a Banach space. 
\end{proof}
 
The characterization of inverse-closed convolution algebras of type I is a classical problem that has been completely solved (see Section \ref{section:review}): the space $\ell_{1,w}(\Z^d)$ is inverse-closed if and only if it satisfies the GRS condition. We shall see now that this property also carries over naturally to the convolution algebras of type II. The case of convolution algebras of type III is more challenging and is the subject of the next section. 
 
\begin{proposition} \label{prop:GRStypeII}
	Let $\Spc X = \bigcap_{n\in \N} \ell_{1,w_n}(\Z^d)$ be a convolution algebra of type II. Then, $\Spc X$ is inverse-closed if and only if the weights $w_n$ are all GRS.
\end{proposition}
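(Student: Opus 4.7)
The plan is to handle the two implications separately, leveraging the Banach-level characterization (Theorem \ref{Theo:Wienerweight}) for sufficiency and an explicit two-tap counterexample for necessity. For the sufficiency, suppose every $w_n$ satisfies the GRS condition and let $h \in \Spc X$ be $\ell_2$-invertible. Because $h \in \ell_{1,w_0}(\Z^d) \subseteq \ell_1(\Z^d)$, the Fourier transform $\hat h$ is continuous on $\T^d$, and $\ell_2$-invertibility then forces $|\hat h|$ to be bounded below by a positive constant on $\T^d$. Applying Proposition \ref{prop:Wienerweight} inside each $\ell_{1,w_n}(\Z^d)$ yields $g := \Fourier_{\rm d}^{-1}\{1/\hat h\} \in \ell_{1,w_n}(\Z^d)$ for every $n$, so $g \in \Spc X$ and $h \ast g = \delta[\cdot]$ as required.

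For the converse I argue by contrapositive. If $w_{n_0}$ fails the GRS condition for some $n_0$, then submultiplicativity combined with Fekete's lemma applied to the subadditive sequence $m \mapsto \log w_{n_0}[m\bk]$ ensures that $\rho(\bk) := \lim_m w_{n_0}[m\bk]^{1/m}$ exists for every $\bk \in \Z^d$, and the failure of GRS furnishes some $\bk_0 \in \Z^d$ with $\rho := \rho(\bk_0) > 1$. I fix $\rho' \in (1,\rho)$ and define the two-tap filter
\begin{equation*}
    h := \delta[\cdot] - \tfrac{1}{\rho'}\,\delta[\cdot - \bk_0],
\end{equation*}
which has finite support and therefore belongs to $\Spc X$. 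Its frequency response $\hat h(\bw) = 1 - \rho'^{-1}\,\ee^{-\jj \langle \bw, \bk_0\rangle}$ satisfies $|\hat h(\bw)| \geq 1 - \rho'^{-1} > 0$ on $\T^d$, so $h$ is $\ell_2$-invertible. A direct geometric-series expansion of $1/\hat h$ gives the convolution inverse $g[m\bk_0] = \rho'^{-m}$ for $m \geq 0$ and $g[\bk] = 0$ otherwise. Picking $\rho'' \in (\rho',\rho)$, the definition of $\rho$ forces $w_{n_0}[m\bk_0] \geq (\rho'')^m$ for all sufficiently large $m$, so $\|g\|_{\ell_{1,w_{n_0}}(\Z^d)}$ dominates a geometric series of ratio $\rho''/\rho' > 1$ and therefore diverges. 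Hence $g \notin \ell_{1,w_{n_0}}(\Z^d)$, and since $\Spc X \subseteq \ell_{1,w_{n_0}}(\Z^d)$ we conclude $g \notin \Spc X$, contradicting the assumed inverse-closedness.

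The main technical obstacle is precisely this necessity direction: Theorem \ref{Theo:Wienerweight} supplies a bad element in $\ell_{1,w_{n_0}}(\Z^d)$, but there is no \emph{a priori} reason for that element to belong to the smaller space $\Spc X = \bigcap_n \ell_{1,w_n}(\Z^d)$. The remedy above is to observe that any finitely supported sequence automatically belongs to $\Spc X$ (because each $w_n$ is finite on finite sets), so a single non-GRS lattice direction $\bk_0$ already lets a two-tap filter witness the pathology inside $\Spc X$ itself.
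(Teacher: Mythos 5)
Your proof is correct and takes essentially the same route as the paper: sufficiency by applying the weighted Wiener lemma (Theorem \ref{Theo:Wienerweight} / Proposition \ref{prop:Wienerweight}) in each $\ell_{1,w_n}(\Z^d)$, and necessity via a compactly supported two-tap filter $\delta[\cdot]-\ee^{-\alpha}\delta[\cdot-\bk_0]$ built from a non-GRS direction $\bk_0$, which is exactly the counterexample the paper invokes (and develops in the proof of Theorem \ref{theo:GRStypeIII}). The only difference is that you carry out that construction explicitly in place, with the Fekete-lemma justification of the limit, rather than deferring to it.
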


Proposition \ref{prop:GRStypeII} is the generalization of Theorem \ref{Theo:Wienerweight} from Banach to Fr\'echet sequence spaces. 

\begin{proof}
	An intersection of inverse-closed convolution algebras is inverse-closed, so the 
	GRS condition is sufficient. For the necessity, we shall assume that  one of the weights $w_{n_0}$ is not GRS, and we shall prove that $\Spc X$ is not inverse-closed. 
	
	For $w_{n_0}$ non GRS,  according to Theorem \ref{Theo:Wienerweight}, the convolution algebra $\ell_{1,w_{n_0}}(\Z^d)$ is not inverse-closed. Then, there exists a signal $a \in \ell_{1,w_{n_0}}(\Z^d)$ such that $\widehat{a}$ does not vanish, and $b = \Fourier_{\mathrm{d}}^{-1} \{ 1/\widehat{a} \} \notin  \ell_{1,w_{n_0}}(\Z^d)$. We can actually construct such a signal $a$ that is moreover compactly supported. This is shown in the proof of \cite[Corollary 5.27]{Groechenig2010}, and is also developed later in the proof of Theorem \ref{theo:GRStypeIII}. Then, the sequence $a \in \Spc X$ (because it is compactly supported), while $b = \Fourier_{\mathrm{d}}^{-1} \{ 1/\widehat{a} \}  \notin \Spc X \subset \ell_{1,w_{n_0}}(\Z^d)$. Therefore, $\Spc X$ is not inverse closed. 
\end{proof}
 
	\subsection{Inverse-closed Convolution Algebras of type III}

The case of sequence spaces of type III was addressed by Fern{\`a}ndez, Galbis and Toft  in the  general setting of convolution-dominated 
infinite-matrix algebras \cite{Fernandez2014spectral}. Their work is based on the following extension of the GRS condition. 

\begin{definition}
Consider a countable family of submultiplicative weights $\bm{w} = (w_n)_{n\in \N}$ such that $w_n \geq w_{n+1}$ for every $n$.
We say that $\bm{w}$ satisfies the \emph{extended GRS condition} if 
\begin{equation} \label{eq:GRSfamily}
	\inf_{n \in \N} \left( \lim_{m \rightarrow \infty} w_n[m \bm{k}]^{1/m}\right) = 1,  \quad \forall \bm{k} \in \Z^d.
\end{equation}
\end{definition}

When the family is reduced to one element (that is, all the weights are equal to a unique weight $w$), we recover the usual GRS condition. Moreover, if one of the   $w_n$ is GRS, then the complete family is GRS. Therefore, the scenario   that is of interest is when no member of the family is GRS, as is the case for $\Spc E(\Z^d)$. Our next result is a refinement of a theorem by 
Fern{\`a}ndez et al.\ in that it shows that the implication also goes the other way around.

\begin{theorem} \label{theo:GRStypeIII}
	Let $\bm{w} = (w_n)_{n\in \N}$ be a family of submultiplicative weights with $w_{n+1} \leq w_n$. Then, the space $\Spc X = \bigcup_{n\in\N} \ell_{1,w_n}(\Z^d)$ is inverse-closed if and only if $\bm{w}$ satisfies the extended GRS condition.
\end{theorem}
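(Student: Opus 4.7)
The sufficiency is essentially the theorem of Fern\'andez, Galbis and Toft \cite{Fernandez2014spectral}, so for that direction I would simply invoke their result. The novelty is the necessity, which I would establish by contraposition: assuming that \eqref{eq:GRSfamily} fails at some $\bm{k}_0 \in \Z^d$, I would exhibit a compactly supported, $\ell_2$-invertible filter $a \in \Spc X$ whose convolution inverse lies outside $\Spc X$.

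\textbf{Key steps.} If \eqref{eq:GRSfamily} fails, then $c := \inf_n \lim_{m \to \infty} w_n[m\bm{k}_0]^{1/m} > 1$ for some $\bm{k}_0$. Since each $w_n$ is submultiplicative, the sequence $(\log w_n[m\bm{k}_0])_{m \geq 1}$ is subadditive in $m$, and Fekete's lemma promotes this asymptotic inequality to the uniform pointwise bound
\begin{equation*}
w_n[m\bm{k}_0] \;\geq\; c^m \qquad \text{for all } m \geq 1 \text{ and all } n \in \N.
\end{equation*}
I would then fix any $\lambda \in \R$ with $1/c < |\lambda| < 1$ and set $a = \delta[\cdot] - \lambda\, \delta[\cdot - \bm{k}_0]$. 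Being compactly supported, $a$ lies in every $\ell_{1,w_n}(\Z^d)$, hence in $\Spc X$; its frequency response $\hat a(\bw) = 1 - \lambda\, \ee^{-\jj \langle \bw, \bm{k}_0 \rangle}$ satisfies $|\hat a(\bw)| \geq 1 - |\lambda| > 0$ on $\T^d$, so $a$ is $\ell_2$-invertible. A geometric (Neumann) expansion of $1/\hat a$ yields the convolution inverse $b = \sum_{m \geq 0} \lambda^m\, \delta[\cdot - m\bm{k}_0]$, so that $b[m\bm{k}_0] = \lambda^m$ for $m \geq 0$ and $b$ vanishes at every other index.

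\textbf{Conclusion and main obstacle.} Combining the two preceding observations, for every $n \in \N$,
\begin{equation*}
\|b\|_{\ell_{1,w_n}(\Z^d)} \;\geq\; \sum_{m \geq 0} |\lambda|^m\, w_n[m\bm{k}_0] \;\geq\; \sum_{m \geq 0} (|\lambda|\, c)^m \;=\; \infty,
\end{equation*}
because $|\lambda|\, c > 1$. Therefore $b \notin \bigcup_n \ell_{1,w_n}(\Z^d) = \Spc X$, contradicting inverse-closedness. The core construction is the same geometric-series trick that underpins the classical converse to the weighted Wiener lemma (and that was already used in Proposition~\ref{prop:GRStypeII}); the genuinely new ingredient is the uniformity in $n$. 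Since $\Spc X$ is a \emph{union} rather than a single Banach algebra, failure of GRS for a single weight is not enough: one needs a geometric lower rate $c > 1$ that controls \emph{every} $w_n[m\bm{k}_0]$ simultaneously. Converting the inf-over-$n$/lim-over-$m$ condition of \eqref{eq:GRSfamily} into such a uniform pointwise estimate, via Fekete's lemma applied ray-by-ray, is the main---and essentially the only---subtle point in the proof.
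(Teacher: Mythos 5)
Your proof is correct and follows essentially the same route as the paper: sufficiency by citing Fern\'andez--Galbis--Toft, and necessity via the same compactly supported filter $a=\delta[\cdot]-\lambda\,\delta[\cdot-\bm{k}_0]$ whose geometric inverse $b$ is shown to lie outside every $\ell_{1,w_n}(\Z^d)$. The only (harmless) difference is that you invoke Fekete's lemma to get the uniform bound $w_n[m\bm{k}_0]\geq c^m$ for all $m$, whereas the paper settles for the eventual bound $w_n[m\bm{k}_0]\geq \ee^{\alpha m}$ for $m\geq m_0(n)$, which already suffices for the divergence of the weighted norm.
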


\begin{proof}
	We already know that $\Spc {X}$ is a convolution algebra. 
	According to \cite[Theorem 2.1]{Fernandez2014spectral}, a convolution algebra of type III based on a GRS family can be seen as an (uncountable) intersection of inverse-closed Banach convolution algebras, and is therefore inverse-closed itself.
	
	We should now prove that the GRS condition is necessary for the inverse-closedness. To do so, we adapt the proof of the necessity of the usual GRS condition for the inverse-closedness of \emph{Banach} convolution algebras \cite[Corollary 5.27]{Groechenig2010}. We first introduce some notations. For every $n\in \N$,  the limit $\ell_n[\bm{k}] := \lim_{m \rightarrow \infty} w_n[m \bm{k}]^{1/m}$ exists in $[1,\infty)$ for every $\bm{k} \in \Z^d$ because the $w_n$ is submultiplicative. Moreover, since the weights are decreasing, there exists for every $\bm{k} \in \Z^d$,
	$$\ell [\bm{k}] := \lim_{n} \ell_n [\bm{k}]= 	\inf_{n \in \N} \left( \lim_{m \rightarrow \infty} w_n[m \bm{k}]^{1/m}\right) \geq 1.$$
Let us now assume that the family $(w_n)$ does \emph{not} satisfy the extended GRS condition \eqref{eq:GRSfamily}.  To finish the proof, it now suffices to show that the space $\Spc X$ is {not} inverse-closed.  If the family is not GRS,  then there exists $\bm{k}_0 \in \Z^d$ such that, for every $n$, $\ell_n [\bm{k}_0]\geq \ell [\bm{k}_0]> 1$. Let us fix $\alpha>0$ such that $1\leq \mathrm{e}^{\alpha} < \ell$. In particular, it means that, for every $n\in \N$, there exists $m_0(n)$ such that,
\begin{equation} \label{eq:generalizedGRSnecessityintermediaire}
	\forall m\geq m_0(n), \ w_n[m\bm{k}_0] \geq \mathrm{e}^{\alpha m}.
\end{equation}
Consider the sequence $a [\bm{k}] = \delta[\bm{k}] - \mathrm{e}^{-\alpha} \delta[\bm{k} - \bm{k}_0]$ \footnote{We can use the same sequence $a$ for the proof of Proposition \ref{prop:GRStypeII}.}. Clearly, $a \in \Spc X$, since it is compactly supported. Moreover, the discrete Fourier transform $\widehat{a}(\bw) = 1 - \ee^{-\alpha} \ee^{-\mathrm{i} \langle \bm{\omega}, \bm{k}_0\rangle}$ satisfies
$\abs{\widehat{a} (\bm{\omega})} \geq 1 - \mathrm{e}^{-\alpha} > 0$ for every $\bm{\omega} \in \T^d$, and therefore does not vanish. The inverse $1/\widehat{a}$ of $\widehat{a}$ is given by
\begin{equation}
	\frac{1}{\widehat{a}(\bm{\omega})} = \frac{1}{1 - \mathrm{e}^{-\alpha} \mathrm{e}^{- \mathrm{i}\langle \bm{\omega}, \bm{k}_0\rangle }} = \sum_{m\geq 0} \mathrm{e}^{- m \alpha} \mathrm{e}^{- \mathrm{i} m \langle \bm{\omega}, \bm{k}_0\rangle}.
\end{equation}
This allows us to deduce that the inverse filter $b = \mathcal{F}_d^{-1} \{ 1/ \widehat{a} \}$ is given by
\begin{equation}
	b[\bm{k}] =  \begin{cases} \mathrm{e}^{-m \alpha} & \text{ if } \bm{m} = m \bm{k}_0, \\
			  0 & \text{ otherwise}. 
			  \end{cases}
\end{equation}
Therefore, we have, for every $n \in \N$,
\begin{equation*}
	\lVert b \rVert_{\ell_{1,w_n}(\Z^d)} = \sum_{\bm{k}\in \Z^d} w_n[\bm{k}] b[\bm{k}] = \sum_{m \geq 0}w_n [m \bm{k}_0]  \mathrm{e}^{-\alpha m} = \infty,
\end{equation*}
since $w_n [m \bm{k}_0]  \mathrm{e}^{-\alpha m}  \geq \mathrm{e}^{\alpha m} \mathrm{e}^{-\alpha m} = 1$ for every $m \geq m_0(n)$ due to \eqref{eq:generalizedGRSnecessityintermediaire}. This means that, for every $n$, $b \notin \ell_{1,w_n}(\Z^d)$. In doing so, we  have constructed a sequence $a \in \Spc X$ such that $\widehat{a}$ does not vanish and $b = \mathcal{F}_d^{-1} \{ 1/ \widehat{a} \} \notin \Spc X$, which proves that the convolution algebra $\Spc X$ is not inverse-closed. 
\end{proof}

	\subsection{The Case of $\Spc E(\Z^d)$: the Smallest Inverse-closed Convolution Algebra}

We shall now prove  that the space of exponentially decreasing sequence is a subspace of all the convolution algebras of Definition \ref{def:types}. We first start with a characterization of $\Spc E(\Z^d)$ as the (uncountable) projective limit of the complete family of inverse-closed Banach convolution algebras. To that end, we define $\mathcal{W}_{\rm GRS}$  as the space of all GRS submultiplicative weighting sequences.

\begin{theorem} \label{theo:icing}
Let $1\leq p \leq \infty$. 
The space $\Spc E(\Z^d)$ is the projective limit of the Banach spaces $\ell_{p,w}(\Z^d)$ for $w \in \mathcal{W}_{\rm GRS}$ \emph{i.e.},
	\begin{align}
	\label{Eq:Eintersect}
	\Spc E(\Z^d)=\bigcap_{w \in \mathcal{W}_{\rm GRS}} \ell_{p,w}(\Z^d).
	\end{align}
With $p=1$, we deduce that $\Spc E(\Z^d)$ is the projective limit of the inverse-closed Banach convolution algebras $\ell_{1,w}(\Z^d)$ with $w\in \mathcal{W}_{\rm GRS}$.
\end{theorem}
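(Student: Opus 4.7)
Both inclusions rest on the characterization that a submultiplicative weight $w$ on $\Z^d$ is GRS if and only if $w[\bk] = \exp(o(|\bk|))$ as $|\bk| \to \infty$; equivalently, for every $\epsilon > 0$ there exists $C_\epsilon > 0$ with $w[\bk] \leq C_\epsilon \ee^{\epsilon |\bk|}$. This follows from Fekete's subadditive lemma applied to $n \mapsto \log w[n\V{e}_i]$ along each standard coordinate direction (the limit coincides with the infimum and is forced to $0$ by GRS), combined with the submultiplicative estimate $w[\bk] \leq \prod_i w[k_i \V{e}_i]$, which upgrades the coordinate-wise vanishing to a uniform bound. The inclusion $\Spc E(\Z^d) \subseteq \bigcap_{w \in \mathcal{W}_{\rm GRS}} \ell_{p,w}(\Z^d)$ is then immediate: if $|a[\bk]| \leq C \ee^{-r|\bk|}$ and $w \in \mathcal{W}_{\rm GRS}$, taking $\epsilon = r/2$ yields $w[\bk]\,|a[\bk]| \leq C C_{r/2} \ee^{-r|\bk|/2} \in \ell_p(\Z^d)$ for every $p \in [1, \infty]$.

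For the reverse inclusion I argue by contraposition. Assume $a \notin \Spc E(\Z^d)$, so $a$ admits no exponential decay rate. Then for every $n \geq 1$ there exists $\bk_n \in \Z^d$ with $|a[\bk_n]| \geq \ee^{-|\bk_n|/(n+1)}$, and after thinning I may assume $|\bk_1| \geq 1$ and $|\bk_{n+1}| \geq 2|\bk_n|$, so that $|\bk_n| \geq 2^{n-1}$. The aim is to construct a GRS submultiplicative weight $w$ with $w[\bk_n]\,|a[\bk_n]| \to \infty$, contradicting $a \in \ell_{p,w}(\Z^d)$ for any $p \in [1, \infty]$ (for $p < \infty$, summability forces $w[\bk_n]\,|a[\bk_n]| \to 0$; for $p = \infty$, boundedness is directly violated). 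Define $w[\bk] := \ee^{\phi(|\bk|)}$, where $\phi\colon [0, \infty) \to [0, \infty)$ is piecewise linear with $\phi(0) = 0$ and slope $s_m := 2/(m+1)$ on $[|\bk_m|, |\bk_{m+1}|]$, with the convention $|\bk_0| := 0$.

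The decreasing sequence of slopes makes $\phi$ concave, and $\phi(0) = 0$ then yields subadditivity on $\R^+$, which combined with $|\bk + \bl| \leq |\bk| + |\bl|$ gives the submultiplicativity of $w$. A splitting of $\phi(|\bk_n|)/|\bk_n|$ at an arbitrary cutoff index $m_0$ (the tail is bounded by $s_{m_0}$, the head by $2|\bk_{m_0}|/|\bk_n| \to 0$) shows $\phi(t)/t \to 0$, so $w$ is GRS by the opening characterization. Finally, the minimum slope on $[0, |\bk_n|]$ is $s_{n-1} = 2/n$, hence $\phi(|\bk_n|) \geq 2|\bk_n|/n$, and therefore $w[\bk_n]\,|a[\bk_n]| \geq \ee^{|\bk_n|(2/n - 1/(n+1))} \geq \ee^{|\bk_n|/n} \to \infty$ thanks to the geometric growth of $(|\bk_n|)$. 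The main obstacle is exactly this joint construction: the weight must simultaneously be submultiplicative, grow subexponentially (for GRS), and be large enough along $(\bk_n)$ to destroy summability, three requirements that pull in different directions. Concavity of $\phi$ reconciles them --- it delivers submultiplicativity via subadditivity, matches the divergence requirement through the minimum-slope bound, and leaves room to drive the slopes to zero for the GRS condition, the last being possible precisely because the interpolation nodes $(\bk_n)$ are spaced geometrically.
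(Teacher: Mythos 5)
Your proposal is correct, and its harder half takes a genuinely different route from the paper. For the inclusion $\Spc E(\Z^d)\subseteq\bigcap_{w}\ell_{p,w}(\Z^d)$ you use the same mechanism as the paper (GRS plus submultiplicativity along the coordinate directions forces $w[\bk]\le C_\epsilon \ee^{\epsilon|\bk|}$; the Fekete step is a nice touch but not even needed, since the GRS limit applied at $\bk=\V e_i$ already gives it), and your observation that $w[\cdot]\,|a[\cdot]|$ then decays exponentially handles all $p\in[1,\infty]$ at once, so you bypass the paper's reduction lemma ($p=1$ and isotropic weights). For the reverse inclusion the paper works through $\alpha$-duality, writing $\Spc E(\Z^d)=\bigcap_{h\in\Spc E'(\Z^d)}\ell_{1,|h|}(\Z^d)$ and then dominating every subexponential envelope $h$ by a GRS submultiplicative weight built from the least concave majorant of $\log h$ along one axis; you instead argue contrapositively, extracting from any $a\notin\Spc E(\Z^d)$ a geometrically spaced sequence of bad points $\bk_n$ with $|a[\bk_n]|\ge\ee^{-|\bk_n|/(n+1)}$ and hand-crafting a concave piecewise-linear exponent $\phi$ with slopes $2/(m+1)$ whose associated weight $w=\ee^{\phi(|\cdot|)}$ is submultiplicative (concavity plus $\phi(0)=0$), GRS (slopes tend to $0$), and blows up $w[\bk_n]|a[\bk_n]|\ge\ee^{|\bk_n|/n}\to\infty$, which excludes $a$ from every $\ell_{p,w}(\Z^d)$ via the embedding into $\ell_{\infty,w}(\Z^d)$. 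Both constructions exploit the same key idea (concavity of the log-weight yields subadditivity, hence submultiplicativity, while leaving room for vanishing asymptotic slope), but yours is tailored to a single offending sequence rather than dominating a whole dual element, which makes the argument self-contained and independent of the $\alpha$-dual formalism; what the paper's route buys in exchange is precisely that duality statement, which is how it justifies the topological identification of $\Spc E(\Z^d)$ as a projective limit. Your argument establishes the set equality \eqref{Eq:Eintersect} (which is what the theorem's displayed claim and its later uses require) but, as written, says nothing about the topology, so the ``projective limit'' phrasing would still need the paper's convergence remark or an equivalent observation.
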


Applying  \cite[Theorem 2.1]{Fernandez2014spectral}, we already know that $\Spc E(\Z^d)$, as a sequence space of type III, can be written as $\Spc E(\Z^d) = \bigcap_{w \in \mathcal{A}} \ell_{1,w} (\Z^d)$ for some family of GRS weights $\mathcal{A} \subset \mathcal{W}_{\rm GRS}$. Our contribution is to show that $\mathcal{A} = \mathcal{W}_{\rm GRS}$ includes \textit{all} the GRS weights. Moreover, we give a self-contained proof of Theorem \ref{theo:icing} that does not rely on the results of  \cite{Fernandez2014spectral}.

We consider $\widetilde{\mathcal{W}}$ the space of weighting sequences $w \in \mathcal{W}_{\rm GRS}$ that are $\lvert \cdot \rvert$-isotropic in the sense that $w[\bk] = w[\bl]$ when $\lvert \bk \rvert = \lvert \bl \rvert$. As a first step, we show that we can restrict ourselves to $\lvert \cdot \rvert$-isotropic weighting sequences. 

\begin{lemma}
	For every $1\leq p \leq \infty$, we have 
	\begin{equation}
		\bigcap_{w \in \mathcal{W}_{\rm GRS}} \ell_{p,w}(\Z^d) = \bigcap_{w \in \mathcal{W}_{\rm GRS}} \ell_{1,w}(\Z^d) = \bigcap_{w \in \widetilde{\mathcal{W}}} \ell_{1,w}(\Z^d).
	\end{equation}
\end{lemma}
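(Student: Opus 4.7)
My plan is to establish the triple equality by proving the two nontrivial inclusions, since $\ell_{1,w}(\Z^d) \subseteq \ell_{p,w}(\Z^d)$ (from the standard embedding $\ell_1 \hookrightarrow \ell_p$) and $\widetilde{\mathcal{W}} \subseteq \mathcal{W}_{\rm GRS}$ provide the reverse inclusions for free.

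For the first equality, given a GRS submultiplicative weight $w$ and an exponent $s > d/p'$ where $p'$ is the conjugate of $p$, I would introduce the auxiliary weight $w'[\bm{k}] := w[\bm{k}](1 + \lvert \bm{k} \rvert)^s$. The polynomial factor is itself submultiplicative and GRS, and the product of two submultiplicative GRS weights remains of that type, so $w' \in \mathcal{W}_{\rm GRS}$. H\"older's inequality with conjugate exponents $p$ and $p'$ then yields
\[
\|a\|_{\ell_{1,w}(\Z^d)} \leq \bigl\|(1+|\cdot|)^{-s}\bigr\|_{\ell_{p'}(\Z^d)} \, \|a\|_{\ell_{p,w'}(\Z^d)},
\]
in which both factors on the right are finite (the first by the choice of $s$, the second by the assumption that $a$ belongs to every GRS-weighted $\ell_p$-space). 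Hence $a \in \ell_{1,w}(\Z^d)$.

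For the second equality, the strategy is to dominate each GRS submultiplicative weight $w$ by an $\lvert \cdot \rvert$-isotropic member of $\widetilde{\mathcal{W}}$. I propose the radial envelope
\[
\widetilde{w}[\bm{k}] := \sup_{|\bm{l}| \leq |\bm{k}|} w[\bm{l}],
\]
which is positive, symmetric, depends only on $|\bm{k}|$, and dominates $w$. Submultiplicativity follows from the decomposition lemma that any $\bm{m} \in \Z^d$ with $|\bm{m}| \leq |\bm{k}| + |\bm{l}|$ can be written as $\bm{m} = \bm{k}' + \bm{l}'$ with $|\bm{k}'| \leq |\bm{k}|$ and $|\bm{l}'| \leq |\bm{l}|$ (a greedy coordinate-wise splitting of the entries of $\bm{m}$), from which $w[\bm{m}] \leq w[\bm{k}']w[\bm{l}'] \leq \widetilde{w}[\bm{k}]\widetilde{w}[\bm{l}]$.

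The main obstacle is verifying that $\widetilde{w}$ is still GRS, since its defining supremum ranges over a growing $\ell_1$-ball whereas the GRS hypothesis on $w$ only constrains growth along each fixed direction. The key auxiliary fact I would establish is that every submultiplicative GRS weight actually enjoys uniform sub-exponential growth, namely
\[
\lim_{|\bm{k}| \to \infty} w[\bm{k}]^{1/|\bm{k}|} = 1.
\]
This is obtained by decomposing $\bm{k} = \sum_{i=1}^d k_i \bm{e}_i$, applying submultiplicativity to get $w[\bm{k}] \leq \prod_{i=1}^d w[k_i \bm{e}_i]$, and invoking the one-dimensional GRS condition along each axis direction $\bm{e}_i$ to bound $w[k_i \bm{e}_i] \leq C_\epsilon \, \ee^{\epsilon |k_i|}$ for arbitrary $\epsilon > 0$. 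This yields $\widetilde{w}[\bm{k}] \leq C_\epsilon^d \, \ee^{\epsilon |\bm{k}|}$ and therefore $\widetilde{w}[m \bm{k}]^{1/m} \to 1$ for every $\bm{k} \in \Z^d$, so $\widetilde{w} \in \widetilde{\mathcal{W}}$. The inclusion $\ell_{1,\widetilde{w}}(\Z^d) \subseteq \ell_{1,w}(\Z^d)$ then closes the argument.
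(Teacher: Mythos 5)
Your proof is correct, and its overall architecture matches the paper's: H\"older's inequality with a polynomially modified GRS weight handles the equality between the $\ell_p$- and $\ell_1$-intersections, and domination of each GRS weight by an $\lvert\cdot\rvert$-isotropic majorant handles the second equality. Where you diverge is in the details of both steps, and the divergence is to your advantage. For the first equality, your exponent $s>d/p'$ makes the argument uniform in the dimension and in $p$, whereas the paper fixes the correction weights $w_1[\bk]=(1+\lVert\bk\rVert)$ (for $p<\infty$) and $w_2[\bk]=(1+\lVert\bk\rVert)^2$ (for $p=\infty$), which literally give a summable H\"older factor only for small $d$ or $q>d$; your version repairs this minor imprecision. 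For the second equality, the paper takes the sphere-maximum $\widetilde w[\bk]=\max_{\lvert\bl\rvert=\lvert\bk\rvert}w[\bl]$ and simply asserts that it lies in $\widetilde{\mathcal W}$, while you take the ball-supremum $\sup_{\lvert\bl\rvert\le\lvert\bk\rvert}w[\bl]$; the ball version is monotone in $\lvert\bk\rvert$ by construction, so your coordinate-wise splitting of any $\V m$ with $\lvert\V m\rvert\le\lvert\bk\rvert+\lvert\bl\rvert$ immediately yields submultiplicativity, a point that is genuinely more delicate for the sphere-maximum (one would need an exact-norm decomposition, which the paper does not spell out). Your verification that the majorant is still GRS, via the uniform bound $w[\bk]\le C_\epsilon^d\,\ee^{\epsilon\lvert\bk\rvert}$ obtained from submultiplicativity along the coordinate axes, is exactly the estimate the paper deploys only later (Step 1 of the proof of Theorem \ref{theo:icing}); importing it here is the right way to close the gap between the directional GRS hypothesis and the growing range of the supremum. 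In short: same strategy, but a more robust isotropic majorant and a fully justified, dimension-correct execution.
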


\begin{proof}
First, we remark that for every $p> 1$, $w\in \mathcal{W}_{\rm GRS}$, and $w_1 [\bk] = (1+\lVert \bk \rVert)$, one has from H\"{o}lder inequality that 
$$\lVert a  \rVert_{\ell_{1,w}(\Z^d)} \leq \lVert a  w_1 \rVert_{\ell_{p,w}(\Z^d)} \lVert 1/w_1 \rVert_{\ell_{q}(\Z^d)} =  \lVert a   \rVert_{\ell_{p,ww_1}(\Z^d)} \lVert 1/w_1 \rVert_{\ell_{q}(\Z^d)}$$
 with $1/p + 1/q = 1$ and $ww_1$ is the pointwise product between $w$ and $w_1$.
We have therefore 
$$\ell_{p,ww_1}(\Z^d) \subseteq \ell_{1,w}(\Z^d) \subseteq \ell_{p,w}(\Z^d).$$
 Since $ww_1 \in \mathcal{W}_{\rm GRS}$ for every $w\in \mathcal{W}_{\rm GRS}$, we deduce that $$\bigcap_{w \in \mathcal{W}_{\rm GRS}} \ell_{p,w}(\Z^d) = \bigcap_{w \in \mathcal{W}_{\rm GRS}} \ell_{1,w}(\Z^d).$$ for every $p< \infty$. The case $p = \infty$ is similar once we remark that $\ell_{\infty,ww_2}(\Z^d) \subseteq \ell_{1,w}(\Z^d) \subseteq \ell_{\infty,w}(\Z^d)$ with  $w_2 [\bk] = (1+\lVert \bk \rVert)^2$.
 
 For the second equality, the inclusion $\widetilde{\mathcal{W}} \subset \mathcal{W}_{\rm GRS}$ implies that $$\bigcap_{w \in \mathcal{W}_{\rm GRS}} \ell_{1,w}(\Z^d)\subseteq \bigcap_{w \in \widetilde{\mathcal{W}}} \ell_{1,w}(\Z^d).$$ For the other embedding, we remark that for any $w \in \Spc W$, there exists $\widetilde{w} \in \widetilde{\Spc W}$ such that $w \leq \widetilde{w}$. Indeed, we can simply consider $\widetilde{w} [\bm{k}] =\max_{\lvert \bm{l} \rvert = \lvert \bm{k} \rvert} w[\bm{l}]$; that is in $\widetilde{\Spc W}$. Consequently, we have $$ \bigcap_{w \in \mathcal{W}_{\rm GRS}} \ell_{1,w}(\Z^d) = \bigcap_{w \in \widetilde{\mathcal{W}}} \ell_{1,w}(\Z^d).$$
\end{proof}

\begin{proof}[Proof of Theorem \ref{theo:icing}]
First of all, a sequence $a$ is in $\Spc E(\Z^d)$ if and only if we have, for every $h\in \Spc E'(\Z^d)$, $\sum_{\bk \in \Z^d} |h[\bk] a[\bk]| <\infty$. Moreover, a sequence $(a_n)_{n\in \N}$ converges to $a$ in $\Spc E(\Z^d)$ if and only if it converges to $a$ in every space $\ell_{1,\abs{h}}(\Z^d)$. This implies that $\Spc E(\Z^d)$ is the projective limit of the spaces $\ell_{1,\abs{h}}(\Z^d)$ for $h\in \Spc E'(\Z^d)$\footnote{We say in that case that $\Spc E(\Z^d)$ and $\Spc E'(\Z^d)$ are $\alpha$-duals, see \cite[Section 30.1]{Kothe1969topological}.}; that is,
\begin{equation} \label{eq:alpha_dual}
	\Spc E(\Z^d)=\bigcap_{h \in  \Spc E'(\Z^d)} \ell_{1,\abs{h}}(\Z^d). 
\end{equation}

\textbf{Step 1.} $\widetilde{\mathcal{W}} \subset \Spc E'(\Z^d)$. 

Let $w \in \widetilde{\Spc W}$ and $\epsilon >0$. We denote by $\bm{e}_i$, $i=1\ldots d$, the canonical basis of $\R^d$. From the GRS condition, we know that, for $m \in \Z$ with $\lvert m \rvert$ big enough, $w[m \bm{e}_i]^{1/m} \leq \mathrm{e}^{\epsilon}$. Therefore, there exists $C>0$ such that for every $m\in \Z$ and $i=1,\ldots, d$, $w[m \bm{e}_i] \leq C \mathrm{e}^{\epsilon \lvert m \rvert}$. Finally, we have for every $\bk = (k_1,\ldots, k_d) \in \Z^d$, using the submultiplicativity of $w$, 
$$w[\bk] \leq w[k_1 \bm{e}_1] \cdots  w[k_d \bm{e}_d] \leq C^d \mathrm{e}^{\epsilon \lvert \bk \rvert}.$$
This holds for every $\epsilon$,  so that  $w \in \Spc E'(\Z^d)$. We deduce that $\Spc E(\Z^d) \subseteq  \bigcap_{w \in \widetilde{\mathcal{W}}} \ell_{1,w}(\Z^d)$.\\

\textbf{Step 2.} $ \bigcap_{w \in \mathcal{W}_{\rm GRS}} \ell_{1,w}(\Z^d) \subseteq \Spc E(\Z^d)$

Let $h \in \Spc E'(\Z^d)$, we shall show that there exists $w \inÊ\Spc W$ such that $\abs{h} \leq w$. In particular, this would imply that $\ell_{1,w} (\Z^d) \subseteq \ell_{1,\abs{h}}(\Z^d) $, and therefore $\bigcap_{w \in \mathcal{W}_{\rm GRS}} \ell_{1,w}(\Z^d) \subseteq \bigcap_{h \in \Spc E'(\Z^d)} \ell_{1,\abs{h}}(\Z^d)$.
Without loss of generality, we can consider that $h$ is $\lvert \cdot \rvert$-isotropic and $h\geq 1$. Indeed, we can otherwise consider the sequence $$\widetilde{h}[\bm{k}] = \max \{1, \max_{\lvert \bm{l} \rvert = \lvert \bm{k} \rvert} \abs{h[\bm{l}]} \}$$ that satisfies $\abs{h} \leq \widetilde{h}$. 

Let $f$ be the function from $\R^+$ to $\R$ that interpolates linearly the samples $(\log h [k \bm{e}_1])_{k\in \mathbb{N}}$. Since $h[\bm{0}] = 1$ and $h\geq 1$, we have $f(0) = 0$ and $f\geq 0$. Let $F$ be the least concave majorant of $f$. We have that $F(t) = \sup_{s\geq 0} \min\left (1, \frac{t}{s} \right) f(s)$ \cite{Peetre1970concave}, from which we easily deduce the following fact: if   $f(t) \leq \alpha t + \beta$ with $\alpha >0$ and $\beta \in \R$, then 
\begin{equation} \label{eq:linear_bound}
F(t) \leq \alpha t + \max(0, \beta).
\end{equation}
The sequence $h$ is in $\Spc E'(\Z^d)$. Hence, for every $\epsilon >0$, there exists a constant $C_\epsilon$ such that $h[\bk] \leq C_\epsilon \mathrm{e}^{\epsilon \lvert \bk \rvert}$. Therefore, we have, for every $\epsilon >0$ and $t\geq 0$, $f(t) \leq \log C_\epsilon + \epsilon t$. From \eqref{eq:linear_bound}, we deduce that, for every $\epsilon >0$ and $t\geq 0$,
\begin{equation} \label{eq:Fbound}
 	F(t) \leq \epsilon t + \max(0, \log C_\epsilon).
\end{equation}
Let $w$ be the sequence defined by $w[\bk] = \exp ( F(\lvert \bk \rvert))$. Then, we have $w \in \widetilde{\Spc W}$. Indeed,
\begin{itemize}
	\item $w$ is $\lvert \cdot \rvert$-isotropic by definition;
	\item $w$ satisfies the GRS condition: from \eqref{eq:Fbound}, we have $1 \leq w[m\bk]^{1/m} \leq C_\epsilon^{1/m} \mathrm{e}^{\epsilon \lvert \bk\rvert}$ for every $\epsilon>0$ and then
	$$1 \leq \liminf_{m} w[m\bk]^{1/m} \leq \limsup_{m} w[m\bk]^{1/m} \leq \mathrm{e}^{\epsilon \lvert \bk \rvert}$$
	for $\epsilon$ arbitrarely small. Hence, $w[m\bk]^{1/m} \underset{m\rightarrow \infty}{\longrightarrow} 1$;
	\item $w$ is submultiplicative: the function $F$ is concave with $F(0) = 0$. Also, it is subadditive in the sense that $F(t+s) \leq F(t) + F(s)$ for every $t,s\geq 0$. Therefore, we have that $w[\bk + \bl] \leq w[\bk]w[\bl]$ for every $\bk,\bl \inÊ\Z^d$ such that $\lvert \bk + \bl \rvert = \lvert \bk   \rvert+\lvert  \bl \rvert$. The other cases are easily deduced from the fact that $F$ is increasing and $w$ is $\lvert \cdot \rvert$-isotropic.
\end{itemize}
Finally, since $f \leq F$, we have also that $h \leq w$, which completes the proof.
\end{proof}

\begin{theorem}\label{theo:Esmallest}
	If $\Spc X$ is a convolution algebra of type I, II, or III, then we have the embedding $\Spc E(\Z^d) \subseteq \Spc X$. 
\end{theorem}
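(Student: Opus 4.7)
My plan is to treat each of the three types in turn, using as the central tool the characterization $\Spc E(\Z^d) = \bigcap_{w \in \mathcal{W}_{\rm GRS}} \ell_{1,w}(\Z^d)$ established in Theorem \ref{theo:icing}. The statement as written is meaningful only when $\Spc X$ is additionally assumed to be inverse-closed (otherwise $\ell_{1,w}$ with $w[\bk] = \ee^{|\bk|/2}$ would be a Type I counterexample, since any $a[\bk] = \ee^{-|\bk|/4}$ lies in $\Spc E(\Z^d)$ but not in it), which matches the informal claim from the introduction that $\Spc E(\Z^d)$ is the \emph{smallest inverse-closed} convolution algebra; I will therefore incorporate inverse-closedness into each of the three cases.

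For Type I, $\Spc X = \ell_{1,w}$ being inverse-closed forces $w$ to be GRS by Theorem \ref{Theo:Wienerweight}, so $w \in \mathcal{W}_{\rm GRS}$ and the inclusion $\Spc E(\Z^d) \subseteq \ell_{1,w} = \Spc X$ is an immediate consequence of Theorem \ref{theo:icing}. For Type II, $\Spc X = \bigcap_n \ell_{1,w_n}$ being inverse-closed forces each $w_n$ to be GRS by Proposition \ref{prop:GRStypeII}; Theorem \ref{theo:icing} then gives $\Spc E(\Z^d) \subseteq \ell_{1,w_n}$ for every $n$, so $\Spc E(\Z^d) \subseteq \bigcap_n \ell_{1,w_n} = \Spc X$.

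The delicate case is Type III: $\Spc X = \bigcup_n \ell_{1,w_n}$ is inverse-closed, which by Theorem \ref{theo:GRStypeIII} is equivalent to the family $(w_n)$ satisfying the extended GRS condition, but no individual $w_n$ need be GRS (as already illustrated by $\Spc E(\Z^d)$ itself), so Theorem \ref{theo:icing} cannot be applied piece by piece to the union. The way forward, which is already used at the start of the proof of Theorem \ref{theo:icing}, is to invoke \cite[Theorem 2.1]{Fernandez2014spectral} to rewrite any such $\Spc X$ as an (uncountable) intersection $\bigcap_{w \in \mathcal{A}} \ell_{1,w}(\Z^d)$ for some family $\mathcal{A} \subseteq \mathcal{W}_{\rm GRS}$ of GRS weights; Theorem \ref{theo:icing} then yields $\Spc E(\Z^d) = \bigcap_{w \in \mathcal{W}_{\rm GRS}} \ell_{1,w} \subseteq \bigcap_{w \in \mathcal{A}} \ell_{1,w} = \Spc X$. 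The main obstacle lies precisely in this step, because the extended GRS condition controls only the spectral-radius-type quantities $\lim_{m} w_n[m\bk]^{1/m}$ and not the weights $w_n$ themselves, so a naive pointwise argument of the form ``every exponentially decaying sequence falls into some $\ell_{1,w_n}$'' does not go through without this auxiliary representation of $\Spc X$ as an intersection of inverse-closed Banach convolution algebras.
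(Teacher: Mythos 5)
Your proof is correct, and for the only delicate case (type III) it is exactly the paper's argument: invoke \cite[Theorem 2.1]{Fernandez2014spectral} to rewrite $\Spc X$ as an intersection $\bigcap_{w \in \mathcal{A}} \ell_{1,w}(\Z^d)$ over some family $\mathcal{A} \subseteq \mathcal{W}_{\rm GRS}$ of GRS weights, and then apply Theorem \ref{theo:icing}. Your treatment of types I and II (inverse-closedness forces the weights to be GRS via Theorem \ref{Theo:Wienerweight} and Proposition \ref{prop:GRStypeII}, after which Theorem \ref{theo:icing} gives the inclusion weight by weight) is precisely what the paper compresses into the word ``obvious''. The one place where you go beyond the paper is the hypothesis, and you are right to do so: as literally stated, with only the submultiplicativity required in Definition \ref{def:types}, the claim fails---your example $w[\bk]=\ee^{|\bk|/2}$, $a[\bk]=\ee^{-|\bk|/4}$ gives a type I (hence also type II and III) convolution algebra that does not contain $\Spc E(\Z^d)$---and the paper's own proof tacitly needs the extra assumption anyway, since the representation of \cite{Fernandez2014spectral} as an intersection of GRS algebras is available only under the extended GRS condition, i.e.\ exactly when $\Spc X$ is inverse-closed by Theorem \ref{theo:GRStypeIII}. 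So your version, with inverse-closedness built into each case, is the statement the paper actually uses later when it asserts $\Spc E(\Z^d) \subseteq \Spc X \subseteq \ell_1(\Z^d)$ for inverse-closed algebras in the hierarchy discussion.
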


\begin{proof}
	The cases of convolution algebras of type I and II is obvious. Consider that $\Spc X$ is a convolution algebra of type III. 
	According to \cite[Theorem 2.1]{Fernandez2014spectral}, $\Spc X =   \bigcap_{w \in \mathcal{A}} \ell_{1,w} (\Z^d)$ for some family of GRS weights $\mathcal{A} \subset \mathcal{W}_{\rm GRS}$. Then,
	\begin{equation}
	\Spc E(\Z^d) = \bigcap_{w \in \mathcal{W}_{\rm GRS}} \ell_{1,w} (\Z^d)  \subseteq \bigcap_{w \in \mathcal{A}} \ell_{1,w} (\Z^d) = \Spc X.
	\end{equation}
\end{proof}

Interestingly, Theorem \ref{theo:icing} allows for an alternative proof of the nuclearity of $\Spc E(\Z^d)$, without considering its dual $\Spc E'(\Z^d)$. 
Indeed, a locally convex space defined by a family of norms is nuclear if and only if, for every norm in the family, there is a stronger norm and the inclusion operator between the associated spaces is nuclear\footnote{This characterization is sometimes chosen as the definition of nuclear spaces, see for instance \cite[Definition 50.1]{Treves2006}.}. Here, we start from $\Spc E(\Z^d)=\bigcap_{w \in \mathcal{W}_{\rm GRS}} \ell_{2,w}(\Z^d)$. We see easily that the canonical embedding from $\ell_{2,ww_2}(\Z^d)$ to $\ell_{2,w}(\Z^d)$ with $w_2[\bk]=(1+\lVert \bk \rVert)^2$ is nuclear.

In the same spirit, Theorem \ref{E:algebra1} is a consequence of Theorem \ref{theo:icing} using the Banach convolution algebra machinery: for every $w \in \mathcal{W}_{\rm GRS}$, $\ell_{1,w}(\Z^d)$ is an inverse-closed convolution algebra, so that the same holds for the intersection. 

	\subsection{The Hierarchy of Inverse-closed Convolution Algebras}
The complete situation is summarized in Figure \ref{fig:algebras} and briefly discussed below. 
\begin{figure}[h!]
\centering
\includegraphics[scale=0.55]{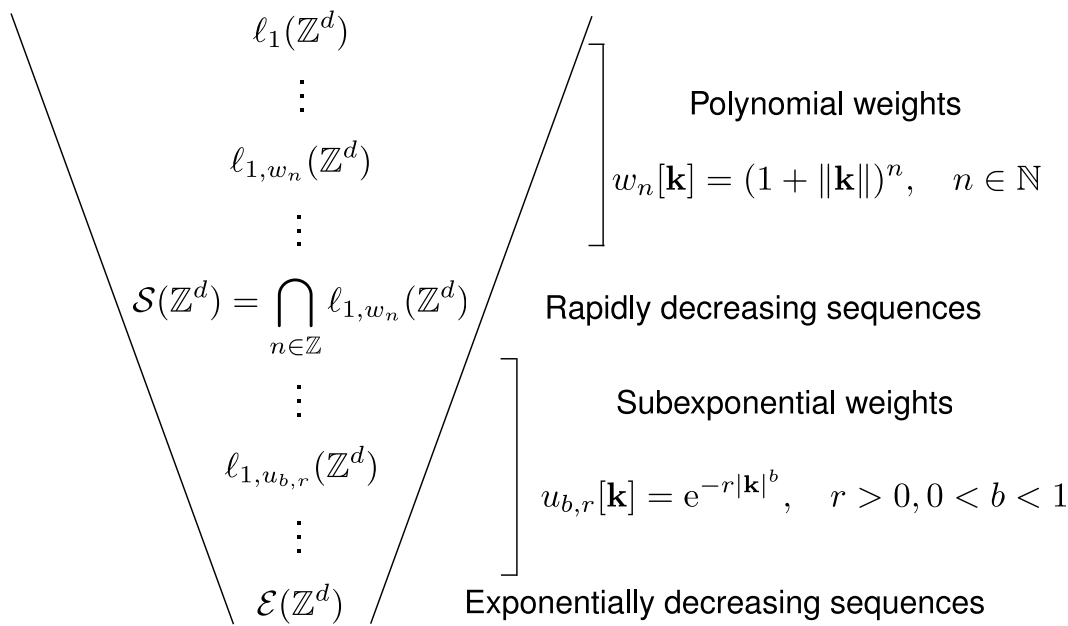}
\caption[Optional caption for list of figures]{Hierarchy of inverse-closed convolution algebras}
\label{fig:algebras}
\end{figure}

First of all, an inverse-closed convolution algebra $\Spc X$ of type I, II, or III, is such that
\begin{equation}
	\Spc E (\Z^d) \subseteq \Spc X \subseteq \ell_{1}(\Z^d).
\end{equation}
The first embedding is exactly Theorem \ref{theo:Esmallest}. The second one is a consequence of the two following facts: (i) any GRS submultiplicative weight $w$ satisfies $w\geq 1$, therefore $\ell_{1,w}(\Z^d) \subseteq \ell_1(\Z^d)$, and (ii) an inverse-closed convolution algebra of Definition \ref{def:types} is always an intersection of inverse-closed Banach convolution algebras. 

Among the inverse-closed algebra of type I, we have   algebraically decreasing sequence spaces, larger than $\Spc S(\Z^d)$, but also sequence spaces between $\Spc E(\Z^d)$ and $\Spc S(\Z^d)$, with subexponential weights, as indicated in Figure \ref{fig:algebras}. 
 
\section{Unconstrained inversion of exponentially decreasing filters} \label{sec:singular}

In 1989, De Boor, H\"ollig and Riemenschneider proved that all finite-impulse response (FIR) filters, including those that have  
frequency nulls, are invertible in $\Spc S'(\Z^d)$ \cite{DeBoor1989}. To obtain this result, they exploited the property that the frequency responses of these filters are entire functions of exponential type. We now complement our previous results by showing that this unconditional invertibility result extends to the class of filters with exponential decay.
The enabling property is the real analyticity of the Fourier transform which implies that the frequency nulls of $\widehat{h}(\bm{\omega})$, if they occur, are not too closely bunched together. For instance, the number of zeros is finite if the dimension $d=1$. 

\begin{theorem} 
\label{Theo:geninverse}
Every non-zero exponentially decreasing filter $h\in \Spc E(\Z^d)$ with (analytic) frequency response $\hat h=\Fourier_{\rm d}\{h\}$ admits a convolution 
inverse $g=\Fourier_{\rm d}^{-1}\{1/\hat h\}\in \Spc S'(\Z^d)$ 
such that $g \ast h = h \ast g = \delta[\cdot]$. 
The solutions fall into two categories:
\begin{enumerate}
\item Stable scenario where $\hat h(\bw)\ne0$ for all $\bw \in \mathbb{T}^d$: the inverse filter $g \in \Spc E(\Z^d)$ is unique and exponentially decreasing. %

\item Singular scenario where $\hat h(\bw)$ is vanishing at some frequencies $(\bw_i)$: the inverse filter $g=\Fourier_{\rm d}^{-1}\{1/\hat h\} \in \Spc S'(\Z^d)$ is of slow growth, meaning that there exists an integer $n \in \N$ and a constant $C=\|g\|_{\ell_{\infty,1/w_n}(\Z^d)}$ such that
$$
\left| g[\bk]\right| \le C (1 + \|\bk\|)^n .
$$
\end{enumerate}
\end{theorem}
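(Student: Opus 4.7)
The plan is to treat the two scenarios separately. The \emph{stable} scenario follows immediately from the inverse-closedness of $\Spc E(\Z^d)$ (Theorem \ref{E:algebra1}): the inverse $g$ lies in $\Spc E(\Z^d) \subset \Spc S'(\Z^d)$, is exponentially decreasing, and uniqueness in a convolution algebra is automatic (if $g_1,g_2$ are both inverses of $h$, then $g_1 = g_1 \ast h \ast g_2 = g_2$ by associativity). The substantive content is therefore the \emph{singular} scenario.

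For the singular case, the natural framework is the discrete Fourier correspondence, which identifies $\Spc S(\Z^d)$ with $C^\infty(\T^d)$ via $\Fourier_{\rm d}$ and, by duality, $\Spc S'(\Z^d)$ with $\Spc S'(\T^d)$. Constructing $g \in \Spc S'(\Z^d)$ then reduces to producing a generalized function $G \in \Spc S'(\T^d)$ satisfying $\hat h \cdot G = 1$ in $\Spc S'(\T^d)$, after which one sets $g = \Fourier_{\rm d}^{-1}\{G\}$. Theorem \ref{Theo:realanalytic} guarantees that $\hat h$ is real analytic on $\T^d$ and, since $h \neq 0$, not identically zero; the existence of such a distributional inverse $G$ is then supplied by the Lojasiewicz division theorem, which asserts precisely that division by a non-identically-zero real analytic function preserves the class of distributions on an open subset of $\R^d$ (applied here to the $2\pi$-periodic extension of $\hat h$). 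The identities $g \ast h = h \ast g = \delta[\cdot]$ follow by transporting $\hat h \cdot G = 1 = \Fourier_{\rm d}\{\delta\}$ through $\Fourier_{\rm d}^{-1}$; the defining pointwise sum $\sum_{\bl \in \Z^d} h[\bk - \bl]\, g[\bl]$ converges absolutely because $h$ decays exponentially while $g \in \Spc S'(\Z^d) = \bigcup_n \ell_{\infty,1/w_n}(\Z^d)$ grows only polynomially, and this membership simultaneously supplies the asserted bound with some integer $n \in \N$ and constant $C = \|g\|_{\ell_{\infty,1/w_n}(\Z^d)}$.

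The hard part is establishing the existence of the distributional inverse $G$ when the zero set $Z = \{\bw \in \T^d : \hat h(\bw) = 0\}$ is nonempty. For $d = 1$ this is elementary: analyticity on the compact $\T$ forces $Z$ to be finite with each zero of finite order, so $1/\hat h$ has only finite-order poles whose distributional extensions are classical. For $d \geq 2$, however, $Z$ can be a nontrivial real-analytic variety, and extending $1/\hat h$ across it requires the full strength of Lojasiewicz's theorem (or a resolution-of-singularities argument in the spirit of Hironaka), which is the genuine analytic heavy lifting of the proof.
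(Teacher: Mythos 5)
Your proposal is correct and follows essentially the same route as the paper: the stable case is dispatched by the inverse-closedness of $\Spc E(\Z^d)$ (Theorem \ref{E:algebra1}), and the singular case by combining the real analyticity of $\hat h$ (Theorem \ref{Theo:realanalytic}) with {\L}ojasiewicz's division theorem applied to the $2\pi$-periodic extension, then returning to $\Spc S'(\Z^d)$ via the slow growth of Fourier coefficients of periodic distributions. Your added remarks (uniqueness in the stable case, absolute convergence of the convolution sum, the elementary $d=1$ picture) are consistent refinements of the same argument rather than a different approach.
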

\begin{proof} 
The first statement is the second part of Theorem \ref{E:algebra1}.
For the second more involved part, we first observe that $\hat h(\bw)=\sum_{\bk \in\Z^d} h[\bk] \ee^{-\mathrm{i} \langle \bw, \bk\rangle}$ is also analytic over $\R^d$ since it is $2\pi$-periodic. We then rely on a powerful theorem in distribution theory that settled the division problem initially raised by Laurent Schwartz \cite{Atiyah1970,Lojasiewicz1959}.
\begin{theorem}[{\L}ojasiewicz's division theorem]
Let $\hat h$ be real analytic (and non-identically zero) over $\R^d$. Then, the equation $\hat h \hat g = \hat u$ admits a distributional solution 
$\hat g \in \Spc S'(\R^d)$ (Schwartz' space of tempered distributions over $\R^d$)
for any tempered distribution $\hat u$. 
\end{theorem} 
This allows us to deduce that 
$\hat g=1/\hat h \in \Spc S'(\R^d)$. 
Since $\hat g$ is periodic as well, we have that  $\hat g \in \Spc S'(\mathbb{T}^d)$ so that $g \in \Spc S'(\Z^d)$, based on the standard property that the Fourier coefficients of a periodic distribution are slowly-increasing.
\end{proof}
\section{Application to cardinal spline interpolation} \label{sec:application}

Given a series of data points $(f[\bk])_{\bk \inZ^d}$, the classical  problem of cardinal spline interpolation is
to determine a function $f: \R^d\to\R$ within some given spline space such that
$\left.f(\V x)\right|_{\V x=\V k}=f[\V k]$ for all $\V k\in \Z^d$ (interpolation condition) \cite{Schoenberg1973}.
In Schoenberg's classical formulation, polynomial splines 
are represented as linear combinations of basis functions that are integer shifts of a compactly-supported B-spline $\varphi: \R^d \to \R$:
\begin{align}
\label{Eq:splineexpand}
f(\bx)=\sum_{\V k \in \Z^d} c[\V k] \varphi(\bx-\V k).
\end{align}
The determination of the spline interpolant of $f[\cdot]$ then reduces to
solving the discrete convolution equation $f[\bk]=\left.f(\V x)\right|_{\V x=\V k}=\sum_{\V l \in \Z^d} c[\V l] \varphi(\V k-\V l)=(\varphi[\cdot]\ast c)[\bk]$ where
$\varphi[\bk]=\left.\varphi(\V x)\right|_{\V x=\V k}$ is the sampled version of $\varphi$. Under the assumption that the system is invertible, the solution is given by \eqref{Eq:splineexpand} with
$c[\bk]=(h \ast f)[\bk]$ where $h$ is the discrete convolution inverse of $\varphi[\cdot]$ \cite{Th'evenaz2000}.
Equivalently, we have that
\begin{align}
\label{Eq:Intexpand}
f(\V x)=\sum_{\bk \in \Z^d} f[\bk] \varphi_{\rm int}(\bx-\bk)
\end{align}
where the interpolation kernel (or Lagrange function)
\begin{align}
\label{Eq:Interpolant}
\varphi_{\rm int}(\bx)=\sum_{\bk \in \Z^d}h[\bk] \varphi(\bx-\bk)
\end{align}
is the unique cardinal spline that interpolates the kronecker delta sequence $\delta[\cdot]$. Eq.\ \eqref{Eq:Intexpand} expresses the one-to-one relation between
the spline interpolant $f: \R^d \to \R$ and its sample values $f[\cdot]$, with the Lagrange function $\varphi_{\rm int}$ providing the mathematical description of the interpolation algorithm.
A key descriptor is the rate of decay of $\varphi_{\rm int}$, as it tells us the influence of neighboring samples on the value of the function at some non-integer location $\V x_0 \inR^d$.

We shall now use our results on convolution algebras to infer the decay of the Lagrange function, $\varphi_{\rm int}$, from the properties of $\varphi$.
First, we shall consider the case of localized generators (e.g., $\varphi \in L_1(\R^d)$) which will allow us to recover the classical results on the exponential decay of polynomial spline interpolants and their higher dimensional variants \cite{Chui1987,Schoenberg1973}.
To encompass an even broader class of splines, we shall then extend the formulation to the case where the generator is the slowly increasing Green's function of some differential operator $\Lop$ \cite{Micchelli1976}. For instance, in dimension $d=1$, cubic splines may be described as $\Lop$-splines with $\Lop=\Dop^4$ being the 
fourth derivative operator, meaning that they admit an expansion of the form
$$
f(x)=
\sum_{k\inZ} c[k] \tfrac{(x-k)_+^3}{3!}
$$
where the generator $\varphi(x)=\frac{x_+^3}{3!}$ (one-sided cubic monomial) is the causal Green's function of $\Dop^4$.
The challenge there is the lack of decay of $\varphi$ which calls for a more sophisticated treatment. This scenario will also allow us to illustrate 
the use of Theorem \ref{Theo:geninverse} on the inverse of convolution operators with frequency nulls.
%
%

%
%
%

\subsection{Integrable basis functions}
\label{Sec:splineI}

Starting from a continuous generator $\varphi \in L_1(\R^d)$, the problem is to specify the interpolator for the shift-invariant space
$$
V_\varphi=\left\{f(\bx)=\sum_{\bk \inR^d} c[\bk] \varphi (\bx -\bk): c \in \ell_2(\Z^d) \right\}.
$$
As explained before, the Lagrange function $\varphi_{\rm int}$---\emph{i.e.}, the unique member of $V_\varphi$ such that
$\varphi_{\rm int}(\V k)=\delta[\V k]$ for every $\V k \inZ^d$---is given by \eqref{Eq:Interpolant} where $h[\cdot]$ is the convolution inverse of
$\varphi[\cdot]$.
In the Fourier domain, this yields
\begin{equation}\label{eq:interp_fourier_L1}
\widehat h(\bw)
=
\frac{1}{\sum_{\bk \inZ^d} 
\varphi(\bk) \ee^{-\jj \langle \bw, \bk\rangle}}
= 
\frac{1}{\sum_{\V n \inZ^d} \widehat \varphi(\bw - 2 \pi \V n)},
\end{equation}
where $\widehat \varphi(\bw)=\int_{\R^d} \varphi(\bx)\ee^{-\jj \langle \bw, \bx\rangle}\dint \bx$ is the continuous-domain Fourier transform of $\varphi$. This is well-defined with some minor assumptions on $\varphi$, while we also assume that the denominator of \eqref{eq:interp_fourier_L1} is non-vanishing. The second equality follows from Poisson's summation formula.

Our results allow one to transfer, under mild conditions, the decay property of an integrable basis function $\varphi$ to its associated interpolator $\varphi_{\mathrm{int}}$.  To do so, we propose to introduce the continuous-domain counterparts of the sequence spaces in Figure \ref{fig:algebras}. 
They are closely related to weighted Wiener amalgam spaces as 
Wiener amalgam spaces that can be tracked back to \cite{wiener1926representation,Wiener1932}. They allow to amalgam local and global criteria on functions \cite{feichtinger1991wiener,heil2003introduction} and have strong connection with sampling theory \cite{aldroubi1998exact,feichtinger1990new}. This is also the spirit of the following definition. 
Let $w$ be a submultiplicative weight function. Then, we set
\begin{equation}
	W_{1,\infty,w}(\R^d) 
	= 
	\left\{ 
	f : \R^d \rightarrow \R : 
	\lVert f \rVert_{W_{1,\infty,w}(\R^d)} 
	= 
	\sup_{\V x \in [0,1]^d}  
	\sum_{\V k \in \Z^d}
	\lvert f ( \V x + \V k) \rvert 
	w(\V k) < \infty 
	\right\},
	\end{equation} which is a Banach space for the norm $\lVert f \rVert_{W_{1,\infty,w}(\R^d)}$. 
This means that the sequences $f_{\V x_0} [\cdot]$ defined by $f_{\V x_0} [\V k] = f(\V x_0 + \V k)$ is all in $\ell_{1,w}(\Z^d)$ for every $\V x_0 \in [0,1]^d$, and that their $\ell_{1,w}$-norms are uniformly bounded. 
We also define, in line with \eqref{Eq:Sintersect} and \eqref{Eq:Eintersect} (for $p=1$), the space of rapidly decaying functions
\begin{equation}
	\mathcal{R}(\R^d) = \bigcap_{n\in \N} W_{1,\infty,w_n}(\R^d),
\end{equation}
together with the space of exponentially decaying ones
\begin{equation} \label{eq:exponentialdecayfunction}
	\mathcal{E}(\R^d) = \bigcap_{w \in \mathcal{W}_{\mathrm{GRS}}} W_{1,\infty, w}(\R^d).
\end{equation}
We recall that the weights $w_n$ impose algebraic decay while the GRS weights further enforce subexponential decay. The inclusion of 
$f \in\mathcal{E}(\R^d)$ corresponds to the notion of exponentially decay in direct analogy with the discrete case in Theorem \ref{theo:icing}. 
We start with a preliminary result.

\begin{lemma}\label{lemma:decayofint}
	Let $w$ be a submultiplicative weight.
	Then, for any function $\varphi \in W_{1,\infty,w}(\R^d)$ and any sequence $a \in \ell_{1,w}(\Z^d)$, the function $\psi = \sum_{\V k} a[\V k] \varphi (\cdot - \V k)$ is in $W_{1,\infty,w}(\R^d)$
\end{lemma}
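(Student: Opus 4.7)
The approach is to mirror the proof of the weighted Young's inequality (Theorem \ref{Prop:algebra1}), adapted to the mixed discrete–continuous setting of the amalgam space. The key ingredient will again be the submultiplicativity of $w$, used pointwise, followed by a Tonelli exchange.

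The plan is as follows. First, I would fix $\V x \in [0,1]^d$ and expand
$\psi(\V x + \V k) = \sum_{\V l \in \Z^d} a[\V l]\, \varphi(\V x + \V k - \V l)$,
and apply $w(\V k) \leq w(\V l)\, w(\V k - \V l)$ to get the pointwise bound
\[
w(\V k)\, |\psi(\V x + \V k)|
\;\leq\;
\sum_{\V l \in \Z^d} \bigl(w(\V l)\, |a[\V l]|\bigr)\, \bigl(w(\V k - \V l)\, |\varphi(\V x + \V k - \V l)|\bigr).
\]
Next, I would sum both sides over $\V k \in \Z^d$, exchange sums by Tonelli, and substitute $\V m = \V k - \V l$ in the inner sum, which yields
\[
\sum_{\V k \in \Z^d} w(\V k)\, |\psi(\V x + \V k)|
\;\leq\;
\Bigl(\sum_{\V l \in \Z^d} w(\V l)\, |a[\V l]|\Bigr) \Bigl(\sum_{\V m \in \Z^d} w(\V m)\, |\varphi(\V x + \V m)|\Bigr)
\;\leq\;
\|a\|_{\ell_{1,w}(\Z^d)}\, \|\varphi\|_{W_{1,\infty,w}(\R^d)},
\]
where the last inequality just invokes the definition of the amalgam norm. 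Finally, taking the supremum over $\V x \in [0,1]^d$ gives the target estimate
$\|\psi\|_{W_{1,\infty,w}(\R^d)} \leq \|a\|_{\ell_{1,w}(\Z^d)}\, \|\varphi\|_{W_{1,\infty,w}(\R^d)}$.

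The only delicate point is a preliminary well-definedness check for $\psi$, which I would dispatch before the norm estimate. Indeed, the very same calculation without the outer weight shows that, for each $\V k \in \Z^d$, the function $\V x \mapsto \sum_{\V l} |a[\V l]|\, |\varphi(\V x + \V k - \V l)|$ is integrable over $[0,1]^d$ (by Tonelli), and in particular finite for almost every $\V x$; so the series defining $\psi$ converges absolutely a.e.\ on $\R^d$. Since $w \geq 1$ (as $w$ is submultiplicative, cf.\ \eqref{eq;multexpo}), all the partial sums appearing above are monotone, and the exchanges of summation are fully justified. This is really a routine adaptation of the argument in Theorem \ref{Prop:algebra1}, with the amalgam structure requiring only the extra step of freezing $\V x \in [0,1]^d$ and taking a supremum at the end; I do not expect any substantive obstacle.
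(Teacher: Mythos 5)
Your proposal is correct and follows essentially the same route as the paper: freeze $\V x \in [0,1]^d$, recognize $\psi(\V x + \cdot)$ on the integer grid as the discrete convolution of $a$ with the samples $\varphi(\V x + \cdot)$, bound its $\ell_{1,w}$-norm, and take the supremum over $\V x$. The only difference is cosmetic — the paper directly invokes the weighted Young's inequality of Theorem \ref{Prop:algebra1}, whereas you re-derive it via submultiplicativity and Tonelli (and add a harmless well-definedness remark).
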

\begin{proof}
	Fix $\V x_0 \in [0,1]^d$ and set $\varphi_{\V x_0} [\V k] = \varphi(\V x_0 + \V k)$ (idem for $\psi_{\V x_0}[\cdot]$), which allows us to write $\psi_{\V x_0}[\cdot] = (\varphi_{\V x_0} [\cdot] * a)$ as a discrete convolution. We then invoke Young's inequality (valid because $w$ is submultiplicative) to deduce that
	\begin{align}
		\lVert \psi_{\V x_0}[\cdot] \rVert_{\ell_{1,w}(\Z^d)} &\leq \lVert \varphi_{\V x_0}[\cdot] \rVert_{\ell_{1,w}(\Z^d)}\lVert a\rVert_{\ell_{1,w}(\Z^d)} \nonumber \\
		&\leq \lVert \varphi \rVert_{W_{1,\infty,w}(\R^d)}\lVert a\rVert_{\ell_{1,w}(\Z^d)}.
	\end{align}
The last bound, which is independent of $\V x_0$, then yields
	$$ \lVert \psi \rVert_{W_{1,\infty,w}(\R^d)} = \sup_{\V x_0 \in [0,1]^d} \lVert \psi_{\V x_0}[\cdot] \rVert_{\ell_{1,w}(\Z^d)} < \infty.$$
\end{proof}

\begin{proposition}\label{prop:lagrangedecay}
	Let $\varphi \in L_1(\R^d)$ be a function such that $\sum_{\V n \in \Z^d} \widehat{\varphi} ( \V \omega - 2 \pi \V n)$ does not vanish for any $\V \omega$. 
	If $\varphi$ has algebraic, fast, or exponential decay (in the sense given above), then the same holds true for the Lagrange function $\varphi_{\mathrm{int}}$ defined by \eqref{Eq:Interpolant}. 
\end{proposition}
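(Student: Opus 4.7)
The plan is to exploit the inverse-closedness machinery of Sections \ref{section:review} and \ref{section:main} in combination with the amalgam bound of Lemma \ref{lemma:decayofint}, handling a single submultiplicative GRS weight first and then taking the appropriate intersection to cover the rapid and exponential regimes.

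First I would treat the Banach step. Suppose $\varphi \in W_{1,\infty,w}(\R^d)$ for a submultiplicative GRS weight $w$. Specializing the amalgam norm at $\V x_0 = \V 0$ gives $\varphi[\cdot] = (\varphi(\V k))_{\V k \in \Z^d} \in \ell_{1,w}(\Z^d)$; in particular $\varphi[\cdot] \in \ell_1(\Z^d)$. The Poisson summation formula then identifies
\[
\Fourier_{\rm d}\{\varphi[\cdot]\}(\V\omega) = \sum_{\V n \in \Z^d} \widehat{\varphi}(\V\omega - 2\pi \V n),
\]
which by hypothesis is continuous on $\T^d$ and bounded away from zero. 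Consequently $\varphi[\cdot]$ is $\ell_2$-invertible, and Proposition \ref{prop:Wienerweight} yields $h = \Fourier_{\rm d}^{-1}\{1/\Fourier_{\rm d}\{\varphi[\cdot]\}\} \in \ell_{1,w}(\Z^d)$. Invoking Lemma \ref{lemma:decayofint} on $\varphi_{\mathrm{int}} = \sum_{\V k \in \Z^d} h[\V k]\,\varphi(\cdot-\V k)$ then delivers $\varphi_{\mathrm{int}} \in W_{1,\infty,w}(\R^d)$.

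Next I would lift this to the three decay classes. For algebraic decay of order $n$, the weight $w = w_n$ is submultiplicative and GRS, so the previous paragraph transfers that order of decay directly to $\varphi_{\mathrm{int}}$. For rapid decay, $\varphi \in \mathcal{R}(\R^d) = \bigcap_{n} W_{1,\infty,w_n}(\R^d)$; crucially the filter $h$ depends only on $\varphi[\cdot]$ and not on the ambient weight, so the single $h$ lies in $\bigcap_{n} \ell_{1,w_n}(\Z^d) = \Spc S(\Z^d)$, and Lemma \ref{lemma:decayofint} applied weight-by-weight gives $\varphi_{\mathrm{int}} \in \mathcal{R}(\R^d)$. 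The exponential case is identical, with $w$ now ranging over $\mathcal{W}_{\mathrm{GRS}}$ and $h \in \bigcap_{w \in \mathcal{W}_{\mathrm{GRS}}} \ell_{1,w}(\Z^d) = \Spc E(\Z^d)$ by Theorem \ref{theo:icing}.

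The only delicate point I anticipate is the justification of the Poisson identity between $\Fourier_{\rm d}\{\varphi[\cdot]\}$ and the periodization of $\widehat{\varphi}$; this holds as soon as $\varphi[\cdot]$ is absolutely summable and $\widehat{\varphi}$ is integrable on each fundamental cell, both of which are automatic under any of the three decay assumptions. Once Poisson is in hand, the transfer of decay from $\varphi$ to $\varphi_{\mathrm{int}}$ is essentially a formal consequence of inverse-closedness of the relevant convolution algebra together with Lemma \ref{lemma:decayofint}, so the remainder of the argument should be routine.
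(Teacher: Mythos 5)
Your argument matches the paper's proof essentially step for step: reduce to a single submultiplicative GRS weight, use the amalgam norm to get $\varphi[\cdot]\in\ell_{1,w}(\Z^d)$, invoke Poisson summation and the non-vanishing hypothesis to obtain $\ell_2$-invertibility, apply the weighted Wiener lemma to place $h$ in $\ell_{1,w}(\Z^d)$, transfer the decay via Lemma \ref{lemma:decayofint}, and then intersect over the weights $w_n$ (resp.\ over $\mathcal{W}_{\rm GRS}$ via Theorem \ref{theo:icing}) for the rapid and exponential cases. This is correct and is the same route as the paper, the only cosmetic difference being that you cite Proposition \ref{prop:Wienerweight} where the paper cites the equivalent inverse-closedness statement of Theorem \ref{Theo:Wienerweight}.
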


\begin{proof}
	Assume that $\varphi \in W_{1,\infty,w}(\R^d)$ for some submultiplicative GRS weight $w$. 
	The weakest assumption is $\varphi \in W_{1,\infty,w}(\R^d)$ with $w$ being the constant function 1. This implies that $\varphi[\cdot]\in\ell_1 (\Z^d)$. It follows from the Poisson summation formula that the expression
	\begin{equation}
	\sum_{\V n \in \Z^d} \widehat{\varphi} (\V \omega - 2\pi\V n)
	=
	\sum_{\V k \in \Z^d} \varphi(\V k ) \ee^{-\jj \langle \bw, \V k \rangle}
	\end{equation}
	is continuous and hence also bounded on $\mathbb{R}^d$.
	
	The hypothesis that $\sum_{\V n \in \Z^d} \widehat{\varphi} (\V \omega - 2\pi\V n)$ does not vanish implies that the sequence $\varphi[\cdot]$ is $\ell_2$-invertible, its inverse being denoted by $h$. Because $\ell_{1,w}(\Z^d)$ is inverse-closed (Theorem \ref{Theo:Wienerweight}), we know that $h \in \ell_{1,w}(\Z^d)$.
	Then, from Lemma \ref{lemma:decayofint}, we deduce that $\varphi_{\mathrm{int}} = \sum_{\V k \in \Z^d} h[\V k] \varphi (\cdot - \V k) \in W_{1,\infty,w}(\R^d)$, which allows us to transfer the decay of $\varphi$ to the Lagrange function $\varphi_{\mathrm{int}}$:
	\begin{itemize}
	\item Algebraic decay: $\varphi \in W_{1,\infty,w_n}(\R^d)$ for some $n \geq 0$ $\Rightarrow \varphi_{\mathrm{int}}\in W_{1,\infty,w_n}(\R^d)$.
	\item Fast decay: $\varphi \in \mathcal{R}(\R^d) \Leftrightarrow \varphi \in W_{1,\infty,w_n}(\R^d)$ for every $n\geq 0$ $\Rightarrow \varphi_{\mathrm{int}} \in W_{1,\infty,w_n}(\R^d)$ for every $n$, where the latter is equivalent to $\varphi_{\mathrm{int}} \in \mathcal{R}(\R^d)$.
	\item Exponential decay: $\varphi \in \mathcal{E}(\R^d) \Rightarrow\varphi \in W_{1,\infty,w}(\R^d)$ for every  GRS weight. Then, the previous argument implies that $h$ (the inverse filter of $\varphi[\cdot]$) is in $\ell_{1,w}(\Z^d)$ for every GRS weight $w$. According to Theorem \ref{theo:icing}, this is equivalent to $h \in \mathcal{E}(\Z^d)$. Again, we deduce that $\varphi_{\mathrm{int}}  \in W_{1,\infty,w}(\R^d)$ for every $w$, which is equivalent to $\varphi_{\mathrm{int}} \in \mathcal{E}(\R^d)$.
	\end{itemize}
\end{proof}

\subsection{Slowly increasing basis functions}

We now generalize the decay estimates to account for a larger class of functions. More precisely, we consider generators $\varphi$ of slow growth---typically, the Green's function of some differential operator $\Lop$ with $\widehat{\varphi}(\bw)= 1/\widehat{L}(\bw)$. Formally, we can still define the interpolator as in the previous case. The corresponding Fourier-domain representation is
\begin{equation}\label{eq:phi_int_fourier}
	\widehat{\varphi}_{\text{int}}(\bm{\omega})
	=
	\frac{\widehat{\varphi}(\bm{\omega})}
	{\sum_{\bm{n}\in \mathbb{Z}^d}\widehat{\varphi}(\bm{\omega}- 2\pi \bm{n})}= \widehat h(\bw) \widehat{\varphi}(\bm{\omega}) 
\end{equation}
where 
$\widehat h(\bw)$ is also specified by \eqref{eq:interp_fourier_L1}.

Under some restrictions on $\widehat{\varphi}$, we shall prove that the interpolator exhibits exponential decay. We shall also show that the spline generator $\varphi$ can be recovered from $\varphi_{\text{int}}$ by the formula
\begin{equation}\label{eq:repro_green}
	\varphi(\bm{x})
	=
	\sum_{\bm{k}\in \mathbb{Z}^d} p [\bm{k}] \varphi_{\text{int}}(\bm{x}- \bm{k})
\end{equation}
where $p$ is a slowly increasing sequence.

\begin{definition}
We view $\mathbb{R}^d$ and $\mathbb{T}^d$ as living inside $\mathbb{C}^d$, and we consider the following tubes:
\begin{itemize}
\item 
${\displaystyle
	\mathbb{T}^d
	=
	\setb{\bm{x} = (x_1,\dots,x_d)\in \mathbb{R}^d}{ -\pi < x_j \leq \pi, j=1\dots d} 
}$;

\item
${\displaystyle
	\mathbb{T}_{\epsilon}^d 
	=
	\setb{\bm{x} 
	= 
	(x_1,\dots,x_d)\in \mathbb{C}^d}
	{\Re(\bm{x})\in \mathbb{T}^d, -\epsilon < \Im(x_j) \leq \epsilon, j=1\dots d };
}$
\item
${\displaystyle
	\mathbb{R}_{\epsilon}^d 
	=
	\setb{\bm{x} = (x_1,\dots,x_d)\in \mathbb{C}^d}{\Re(\bm{x})\in \mathbb{R}^d, -\epsilon < \Im(x_j) \leq \epsilon, j=1\dots d }.
}$
\end{itemize}
\end{definition}

Before stating our theorem on the exponential decay of interpolators, we begin with a lemma 
to show that the sequence 
\begin{equation}
	h[\cdot]=\mathcal{F}^{-1}_{\rm d}\set{  \frac{1}
	{\sum_{\bm{n}\in \mathbb{Z}^d}\widehat{\varphi}(\bm{\omega}- 2\pi \bm{n})}  }
\end{equation}	
is well defined and decays exponentially.

\begin{lemma}\label{lem:analytic_interp_1}

Suppose we have a continuous, slowly increasing $\varphi: \mathbb{R}^d \rightarrow \mathbb{R}$ that has a generalized Fourier transform
$\widehat{\varphi}:\mathbb{R}^d\rightarrow [0,\infty)$ that satisfies the following conditions:
\begin{itemize}
\item $1/\widehat{\varphi}$ has an analytic extension to $\mathbb{R}_{\epsilon}^d$ for some $\epsilon>0$. This implies that $1/\widehat{\varphi}$ is real analytic on $\mathbb{R}^d$;
\item $\widehat{\varphi}(\bm{\omega}) \geq 0$ for $\bm{\omega} \in \mathbb{R}^d$;
\item $\exists \gamma>0$ and $C>0$ such that 
$\widehat{\varphi}(\bm{\omega})
<
C \abs{\bm{\omega}}^{-d-\gamma}$ 
for 
$\bm{\omega} \in \mathbb{R}_{\epsilon}^d 
\backslash \mathbb{T}_{\epsilon}^d$.

\end{itemize}
Then the function 
\begin{equation}
	\widehat{h}(\bm{\omega})
	:=
	\frac{1}{\sum_{\bm{n}\in\mathbb{Z}^{d}} 
	\widehat{\varphi}(\bm{\omega}- 2\pi \bm{n})} 
\end{equation}
is analytic on $\mathbb{R}_{\epsilon^{\prime}}^d$ for some $\epsilon\geq\epsilon^{\prime}>0$.

\end{lemma}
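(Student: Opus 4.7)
The plan is to analyze the series $S(\bw) := \sum_{\bm{n} \in \Z^d} \widehat{\varphi}(\bw - 2\pi \bm{n})$ in three stages: first, show that $\widehat{\varphi}$ itself extends analytically to a strip $\R_{\epsilon_1}^d$ for some $0 < \epsilon_1 \leq \epsilon$; second, verify that the series converges uniformly on compacta of that strip to an analytic limit; third, check that this limit is bounded away from zero on a (possibly thinner) strip $\R_{\epsilon'}^d$, so that $\widehat{h} = 1/S$ is analytic there.

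For the first stage, I would exploit the fact that $1/\widehat{\varphi}$, being analytic, is finite on $\R_\epsilon^d$, which forces $\widehat{\varphi} > 0$ on all of $\R^d$ (otherwise $1/\widehat{\varphi}$ could not be finite). Any zeros of $1/\widehat{\varphi}$ on $\R_\epsilon^d$ must then lie in the tube $\T_\epsilon^d$: indeed, the decay hypothesis gives $|1/\widehat{\varphi}(\bw)| \geq |\bw|^{d+\gamma}/C$ on $\R_\epsilon^d \setminus \T_\epsilon^d$, so $1/\widehat{\varphi}$ is already bounded away from zero there. On the compact real torus $\T^d$, the continuous function $1/\widehat{\varphi}$ is strictly positive, so by continuity on the compact tube $\overline{\T_{\epsilon/2}^d}$ there exists $\epsilon_1 \in (0, \epsilon]$ with $|1/\widehat{\varphi}| \geq \eta > 0$ on $\T_{\epsilon_1}^d$. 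Combined with the previous lower bound, this yields $1/\widehat{\varphi}$ non-vanishing on the entire strip $\R_{\epsilon_1}^d$, and hence $\widehat{\varphi} = 1/(1/\widehat{\varphi})$ is analytic (and finite) on $\R_{\epsilon_1}^d$.

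For the second stage, by $2\pi$-periodicity in the real direction it suffices to control the series for $\bw \in \overline{\T_{\epsilon_1/2}^d}$. For $|\bm{n}|$ exceeding some threshold $N$ depending only on $\epsilon_1$, the shifted point $\bw - 2\pi\bm{n}$ lies in $\R_{\epsilon_1}^d \setminus \T_\epsilon^d$, so the decay hypothesis gives
\begin{equation*}
	\lvert \widehat{\varphi}(\bw - 2\pi\bm{n}) \rvert \leq C\,\lvert \bw - 2\pi\bm{n} \rvert^{-d-\gamma} \lesssim \lvert \bm{n} \rvert^{-d-\gamma},
\end{equation*}
which is summable over $\Z^d$ since $\gamma > 0$. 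The remaining finitely many terms ($|\bm{n}| \leq N$) are continuous in $\bw$ on a compact set, hence uniformly bounded by Stage 1. The series is thus a uniform limit of analytic functions on compacta of $\T_{\epsilon_1/2}^d$, so $S$ is analytic there and, by periodicity, on $\R_{\epsilon_1/2}^d$.

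It remains to show that $S$ is bounded away from zero on some strip. On $\R^d$ we have $S(\bw) \geq \widehat{\varphi}(\bw) > 0$, and $S$ is continuous and $2\pi$-periodic, so on the compact torus $\T^d$ it is bounded below by some $\mu > 0$. Continuity of $S$ on $\overline{\T_{\epsilon_1/4}^d}$ together with compactness then yields $\epsilon' \in (0, \epsilon_1/2]$ with $|S(\bw)| \geq \mu/2$ on $\T_{\epsilon'}^d$; periodicity extends this bound to $\R_{\epsilon'}^d$, and $\widehat{h} = 1/S$ is analytic on that strip. The main technical obstacle is the careful bookkeeping of the successive shrinkages of $\epsilon$---one to secure analyticity of the individual terms, one to secure summability of the tail, and one to secure non-vanishing of the sum---but each reduction is controlled by continuity on the compact real torus combined with the polynomial decay provided by the third hypothesis.
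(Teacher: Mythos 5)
Your argument hinges on the Stage~1 claim that $1/\widehat{\varphi}$ is strictly positive on the real torus, so that $\widehat{\varphi}=1/(1/\widehat{\varphi})$ is itself analytic and bounded on a complex tube, the full periodized sum $S$ is continuous there, and the lower bound for $S$ can be obtained by compactness. This is exactly where the proposal parts company with the lemma's intended scope: the hypotheses only ask that $1/\widehat{\varphi}$ admit an analytic extension, not that it be bounded away from zero, and in the very application for which the lemma is stated (Green's-function generators, e.g.\ $\widehat{\varphi}(\omega)=1/\omega^4$ for cubic splines, or $\widehat{\varphi}(\bw)=1/\lVert\bw\rVert^{2n}$ for polyharmonic splines) one has $1/\widehat{\varphi}(\bm{0})=0$, so $\widehat{\varphi}$ has a pole at the origin. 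There the $\bm{n}=\bm{0}$ term of your series is unbounded near $\bw=\bm{0}$, $S$ is not continuous on any compact tube containing the origin, and your Stage~3 step (``$S$ is continuous and positive on $\T^d$, hence bounded below on a thin complex tube'') collapses. If one reads the codomain $[0,\infty)$ ultra-literally your proof is internally consistent, but then the lemma would not cover the example to which it is applied immediately afterwards, so the finite-valued reading cannot be the intended one.

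The paper's proof avoids ever touching $\widehat{\varphi}$ near its singularities: it factors $\widehat{h}=\widehat{\varphi}^{-1}\bigl(1+\widehat{\varphi}^{-1}F\bigr)^{-1}$ with $F(\bw)=\sum_{\bm{n}\neq\bm{0}}\widehat{\varphi}(\bw-2\pi\bm{n})$. Each term of $F$ is analytic on $\T_{\epsilon}^d$ because the shifted argument lies in $\R_{\epsilon}^d\setminus\T_{\epsilon}^d$, where the decay bound forces $1/\widehat{\varphi}$ to be bounded away from zero; the series converges uniformly on compacta exactly as in your Stage~2; and non-vanishing is required only of $1+\widehat{\varphi}^{-1}F$, which is $\geq 1$ on the real torus because $1/\widehat{\varphi}$ and $F$ are both nonnegative there, hence bounded away from zero on some $\T_{\epsilon'}^d$ by continuity. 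Multiplying by the analytic function $1/\widehat{\varphi}$ then yields analyticity of $\widehat{h}$ even at the real zeros of $1/\widehat{\varphi}$, where $\widehat{h}$ simply vanishes. Your tail estimate and the periodicity bookkeeping are fine and match the paper's; the missing idea is this factorization (or an equivalent local argument near the real zeros of $1/\widehat{\varphi}$), without which the argument does not cover the unbounded-$\widehat{\varphi}$ case that the lemma exists to handle.
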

\begin{proof}
The idea for our proof comes from \cite{madych1990}, where the authors showed that polyharmonic interpolators have exponential decay. First, we rewrite the expression as 
\begin{align*}
	\widehat{h}(\bm{\omega})
	&=
	\frac{1}{\widehat{\varphi}(\bm{\omega})
	+ 
	\sum_{\bm{n} \neq \bm{0}} 
	\widehat{\varphi}(\bm{\omega}- 2\pi \bm{n})} 		=
	\frac{1}{\widehat{\varphi}(\bm{\omega})}\left(
	\frac{1}{1 + \widehat{\varphi}(\bm{\omega})^{-1} 
	F(\bm{\omega})}\right)
%
\end{align*}
where
\begin{equation}
	F(\bm{\omega})
	:=
	\sum_{\bm{n} \neq \bm{0}} \widehat{\varphi}(\bm{\omega}- 2\pi \bm{n}).
\end{equation}
Now, for $\bm{n}\neq \bm{0}$, $1/\widehat{\varphi}(\cdot- 2\pi \bm{n})$ is an analytic function that is bounded away from $0$ on $\mathbb{T}_\epsilon^d$. 
Hence its inverse $\widehat{\varphi}(\cdot- 2\pi \bm{n})$ is also analytic on $\mathbb{T}_\epsilon^d$.
The decay of  $\widehat{\varphi}$ ensures that the series in $F$ converges uniformly on compact subsets of $\mathbb{T}_\epsilon^d$. 
Therefore, the series converges to an analytic function.  
Also, the analyticy of $1/\widehat{\varphi}$ implies that the product $\widehat{\varphi}^{-1}F $ is analytic. 
Then we choose $\epsilon \geq\epsilon^{\prime}>0$ so that $1+\widehat{\varphi}^{-1}F $ is bounded away from $\bm{0}$ on $\mathbb{T}_{\epsilon^{\prime}}^d$.  
This allows us to invert with the guarantee that $(1+\widehat{\varphi}^{-1}F )^{-1}$ is analytic on $\mathbb{T}_{\epsilon^{\prime}}^d$. 
We then multiply by the analytic function $1/\widehat{\varphi}$ and find that $\widehat{h}$ is analytic on $\mathbb{T}_{\epsilon^{\prime}}^d$. 
Finally, $\widehat{h}$ extends periodically to an  analytic function on $\mathbb{R}_{\epsilon^{\prime}}^d$.
\end{proof}

\begin{theorem}
If $\varphi$ is a basis function as defined in Lemma \ref{lem:analytic_interp_1}, then the corresponding interpolating function 
$\varphi_{\rm{int}}:\mathbb{R}^d\rightarrow \mathbb{C}$, defined in the Fourier domain by \eqref{eq:phi_int_fourier}, decays exponentially fast.
\end{theorem}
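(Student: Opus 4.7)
The plan is to use a Paley--Wiener-type argument: first to show that $\widehat{\varphi}_{\rm int}$ admits an analytic extension to a complex tube $\R_{\epsilon'}^d$ with uniform integrable decay on horizontal slices, and then to shift the contour of the inverse Fourier transform so as to pick up an exponential factor.

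For the analytic extension, Lemma \ref{lem:analytic_interp_1} already gives that $\widehat{h}$ is analytic and $2\pi$-periodic on $\R_{\epsilon'}^d$. The decay hypothesis $|\widehat{\varphi}(\bm{\omega})| < C|\bm{\omega}|^{-d-\gamma}$ on $\R_\epsilon^d \setminus \T_\epsilon^d$ forces $1/\widehat{\varphi}$ to be nonvanishing there, so $\widehat{\varphi}$ is itself analytic on $\R_\epsilon^d \setminus \T_\epsilon^d$; while on the slab $\T_{\epsilon'}^d$, I would use the identity $\widehat{\varphi}_{\rm int} = 1/(1 + \widehat{\varphi}^{-1}F)$ developed inside the proof of Lemma \ref{lem:analytic_interp_1} to obtain analyticity of the product $\widehat{h}\widehat{\varphi}$ without ever working with $\widehat{\varphi}$ directly at its possible poles. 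These local descriptions coincide on $\R^d$ with the same real-analytic function, so by the identity principle they glue to a single analytic $\widehat{\varphi}_{\rm int}$ on $\R_{\epsilon'}^d$. For the decay, the denominator $\sum_{\bm{n}}\widehat{\varphi}(\cdot - 2\pi \bm{n})$ is $2\pi$-periodic and bounded away from zero on $\R_{\epsilon'}^d$ by the lemma, while the numerator satisfies $|\widehat{\varphi}(\bm{\omega} + \jj\bm{\eta})| \leq C(1 + |\bm{\omega}|)^{-d-\gamma}$ uniformly for $|\bm{\eta}|_\infty \leq \epsilon'$ (using the given hypothesis off $\T_\epsilon^d$ and continuity on the compact set $\T_\epsilon^d$). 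Thus $|\widehat{\varphi}_{\rm int}(\bm{\omega} + \jj\bm{\eta})| \leq C'(1 + |\bm{\omega}|)^{-d-\gamma}$ uniformly.

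Finally I would write $\varphi_{\rm int}(\bm{x}) = (2\pi)^{-d}\int_{\R^d}\widehat{\varphi}_{\rm int}(\bm{\omega})\,\ee^{\jj\langle\bm{\omega},\bm{x}\rangle}\,\dint\bm{\omega}$, and for a given $\bm{x}\in \R^d$ choose $\bm{\eta}$ with $\eta_j = (\epsilon'/2)\,\mathrm{sgn}(x_j)$ so that $\langle\bm{\eta},\bm{x}\rangle = (\epsilon'/2)|\bm{x}|$. An application of Cauchy's theorem coordinate by coordinate allows me to deform the contour from $\R^d$ to $\R^d + \jj\bm{\eta}$, the vanishing of the lateral contributions being ensured by the decay estimate, and the integrability on the shifted contour by the same estimate. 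This produces $|\varphi_{\rm int}(\bm{x})| \leq C''\,\ee^{-(\epsilon'/2)|\bm{x}|}$, which is the claimed exponential decay. The main obstacle is the careful execution of the contour shift in several complex variables: one proceeds iteratively in each coordinate and must show that the lateral terms at infinity vanish at every step, which is precisely where the integrable $(1+|\bm{\omega}|)^{-d-\gamma}$ decay uniform in the shift parameter is essential.
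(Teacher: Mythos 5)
Your argument follows essentially the same route as the paper: both establish analyticity of $\widehat{\varphi}_{\rm int}$ on a tube $\R_{\epsilon'}^d$ by combining the representation $\widehat{\varphi}_{\rm int}=1/(1+\widehat{\varphi}^{-1}F)$ on the central tube with the product $\widehat{h}\,\widehat{\varphi}$ off it, together with integrable decay on horizontal slices; the only difference is that the paper then simply cites a Paley--Wiener-type theorem (Reed--Simon, Theorem IX.14) for the final step, whereas you re-derive it by an explicit coordinate-by-coordinate contour shift, which is a legitimate (if more laborious) way to finish.

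One sentence of yours needs repair: you justify the uniform bound on the ``numerator'' by invoking ``continuity of $\widehat{\varphi}$ on the compact set $\T_\epsilon^d$'', but in the intended applications $\widehat{\varphi}$ is not continuous there --- e.g.\ for the cubic spline $\widehat{\varphi}(\omega)=1/(\jj\omega)^4$ blows up at $\omega=0$, which is exactly why you (correctly) avoided evaluating $\widehat{\varphi}$ at its poles in the analyticity step. The fix is immediate and uses only what you already set up: bound $\widehat{\varphi}_{\rm int}$ on a closed sub-tube over $\T^d$ directly from the analytic expression $1/(1+\widehat{\varphi}^{-1}F)$ (analytic, hence bounded on a compact closed tube), and off the central tube bound $\widehat{h}\,\widehat{\varphi}$ using the periodicity and boundedness of $\widehat{h}$ together with the decay hypothesis on $\widehat{\varphi}$ from Lemma \ref{lem:analytic_interp_1}. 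With that substitution, the uniform estimate $\lvert\widehat{\varphi}_{\rm int}(\bm{\omega}+\jj\bm{\eta})\rvert\le C'(1+\lvert\bm{\omega}\rvert)^{-d-\gamma}$ holds and your contour-shift conclusion goes through.
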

\begin{proof}
We rewrite the Fourier transform of $\varphi_{\text{int}}$ as
\begin{equation}
	\widehat{\varphi}_{\text{int}}(\bm{\omega})
	=
	\frac{1}{1 + \widehat{\varphi}(\bm{\omega})^{-1} 
	F(\bm{\omega})},
\end{equation}
which was found to be analytic on some $\mathbb{T}_{\epsilon^{\prime}}^d$ in the proof of Lemma \ref{lem:analytic_interp_1}. 

Also, since $1/\widehat{\varphi}$ is bounded away from $0$ on the boundary of $\mathbb{T}_{\epsilon}^d$, 
$\widehat{\varphi}$ is analytic on a region $R$ that contains $\mathbb{R}_{\epsilon^{\prime}}^d \backslash \mathbb{T}_{\epsilon^{\prime}}^d$ and nontrivially intersects $\mathbb{T}_{\epsilon^{\prime}}^d$. Since $\widehat{h}$  is analytic on $\mathbb{R}_{\epsilon^{\prime}}^d$, the product $\widehat{\varphi}_{\text{int}}=\widehat{h}\widehat{\varphi}$ is also analytic on $R$.  

These two facts together imply that $\widehat{\varphi}_{\text{int}}$ is analytic on $\mathbb{R}_{\epsilon^{\prime}}^d$.  
Moreover, the decay of $\widehat{\varphi}$ ensures that on $\mathbb{R}_{\epsilon^{\prime}}^d$, 
$\widehat{\varphi}_{\text{int}}$ is integrable over translates of $\mathbb{R}^d$.  Hence $\widehat{\varphi}_{\text{int}}$ is analytic on $\mathbb{R}_{\epsilon^{\prime}}^d$ and therefore its inverse Fourier transform (on $\mathbb{R}^d$) has exponential decay \cite[Theorem 9.14]{Reed1980methods}.
\end{proof}

\begin{corollary}
The reproduction property of \eqref{eq:repro_green} is valid and the sequence $p$ has at most polynomial growth.
\end{corollary}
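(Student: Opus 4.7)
The identity $\varphi_{\rm int}(\V k)=\delta[\V k]$ forces the candidate $p[\cdot]$: sampling the claimed expansion \eqref{eq:repro_green} at any $\V l\in \Z^d$ collapses the sum to $p[\V l]$, so that necessarily $p[\V k]=\varphi(\V k)$. Since $\varphi$ is assumed to be continuous and slowly increasing, the resulting $p$ is a slowly-increasing sequence, i.e.\ $p\in\Spc S'(\Z^d)$. This settles the polynomial-growth part of the statement and leaves only the validity of the reproduction formula to verify.

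I would then verify the identity $\varphi=\sum_{\V k}p[\V k]\varphi_{\rm int}(\cdot-\V k)$ in the Fourier domain. By the Poisson summation formula applied to $\widehat{\varphi}$ (which holds because, under the hypotheses of Lemma \ref{lem:analytic_interp_1}, $\widehat{\varphi}$ is integrable outside $\T^d$ with decay faster than $|\bw|^{-d-\gamma}$, so the periodization converges absolutely and in the distributional sense), the discrete Fourier transform of $p[\V k]=\varphi(\V k)$ is
\[
\widehat{p}(\bw)=\sum_{\V k\in\Z^d}\varphi(\V k)\ee^{-\jj\langle\bw,\V k\rangle}=\sum_{\V n\in\Z^d}\widehat{\varphi}(\bw-2\pi\V n)=\frac{1}{\widehat{h}(\bw)}.
\]{corollaryFT}
Multiplying through by $\widehat{\varphi}_{\rm int}=\widehat{h}\,\widehat{\varphi}$ gives $\widehat{p}\,\widehat{\varphi}_{\rm int}=\widehat{\varphi}$, which is the Fourier transcription of \eqref{eq:repro_green}.

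To transfer the identity back to the spatial domain, I would note that $\varphi_{\rm int}$ decays exponentially by the previous theorem while $p$ grows at most polynomially, so the series $\sum_{\V k}p[\V k]\varphi_{\rm int}(\V x-\V k)$ converges absolutely and uniformly on compact subsets of $\R^d$, defining a continuous function whose Fourier transform coincides with $\widehat{p}\,\widehat{\varphi}_{\rm int}=\widehat{\varphi}$; uniqueness of the Fourier representation in $\Spc S'(\R^d)$ then yields \eqref{eq:repro_green} pointwise.

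The main delicate point is not the algebraic manipulation but the rigorous use of Poisson summation when $\varphi$ is only slowly increasing: one must argue that $\widehat{p}$, initially a tempered distribution on $\T^d$, actually coincides with the absolutely convergent periodization $\sum_{\V n}\widehat{\varphi}(\bw-2\pi\V n)=1/\widehat{h}(\bw)$. This is where the tube-decay hypothesis on $\widehat{\varphi}$ from Lemma \ref{lem:analytic_interp_1} is essential: it makes the periodization a well-defined continuous (indeed real-analytic) function on $\T^d$, so the distributional Poisson identity applies and both sides agree as elements of $\Spc S'(\T^d)$.
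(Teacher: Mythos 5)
Your identification of the candidate coefficients, $p[\V k]=\varphi(\V k)$, is consistent with the paper's examples and makes the polynomial-growth half of the statement immediate; but the step on which your whole verification rests---the distributional Poisson identity $\widehat p(\bw)=\sum_{\V n}\widehat\varphi(\bw-2\pi\V n)=1/\widehat h(\bw)$---is not established, and the justification you give for it is false in precisely the setting this corollary targets. The hypotheses of Lemma \ref{lem:analytic_interp_1} make $1/\widehat\varphi$ analytic; $\widehat\varphi$ itself may blow up where $1/\widehat\varphi$ vanishes, and in the motivating Green's-function examples it does so non-integrably (for cubic splines, $\widehat\varphi(\omega)=1/\omega^4$). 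Hence the central term of the periodization diverges at the lattice points: $\sum_{\V n}\widehat\varphi(\bw-2\pi\V n)$ is \emph{not} a ``well-defined continuous (indeed real-analytic) function on $\T^d$''---it is not even locally integrable near $\bw=\V 0$---and $1/\widehat h$ requires a distributional interpretation exactly because $\widehat h$ has zeros there. Moreover, for a merely slowly increasing $\varphi$ the generalized Fourier transform is only fixed up to the regularization chosen at those singular points (Green's functions of $\Lop$ differ by polynomials, whose transforms are derivatives of $\delta$ at $\V 0$), so equating the DTFT of the sample sequence with ``the'' periodization of $\widehat\varphi$ is the delicate claim, not a routine application of Poisson summation; the classical form you invoke covers the integrable case of Section \ref{Sec:splineI}, not this one.

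The paper's proof avoids this entirely: by Lemma \ref{lem:analytic_interp_1} and Theorem \ref{Theo:realanalytic}, $\widehat h$ is real analytic, so $h\in\Spc E(\Z^d)$, and Theorem \ref{Theo:geninverse} (the singular scenario, resting on {\L}ojasiewicz's division theorem) supplies a convolution inverse $p=\Fourier_{\rm d}^{-1}\{1/\widehat h\}\in\Spc S'(\Z^d)$ of slow growth even though $\widehat h$ vanishes at some frequencies; the reproduction formula is then read off from $\widehat\varphi_{\rm int}=\widehat h\,\widehat\varphi$ together with $\widehat p\,\widehat h=1$. If you wish to keep your candidate $p=\varphi[\cdot]$, a repair that avoids Poisson summation for $\varphi$ is to sample the relation $\varphi_{\rm int}=\sum_{\V l}h[\V l]\varphi(\cdot-\V l)$ at the integers (absolutely convergent, since $h$ decays exponentially while $\varphi$ grows polynomially) and use $\varphi_{\rm int}[\V k]=\delta[\V k]$ to get $h*\varphi[\cdot]=\delta[\cdot]$; but note that the back-substitution $\sum_{\V k}p[\V k]\varphi_{\rm int}(\V x-\V k)=\sum_{\V m}(p*h)[\V m]\varphi(\V x-\V m)$ is not an absolutely convergent rearrangement (both $p$ and $\varphi$ grow), so that interchange also needs an argument beyond Fubini, e.g.\ carried out in the Fourier domain as the paper implicitly does.
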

\begin{proof}
In Lemma \ref{lem:analytic_interp_1} we showed that the sequence $h$ of \eqref{Eq:Interpolant} is exponentially decaying.  Therefore Theorem \ref{Theo:geninverse} implies that its convolution inverse $p$ is of slow growth.
\end{proof}

Let us now apply those results to the case of the cubic splines with $\Lop=\Dop^4$.
These splines can be generated from the canonical Green's function of $\Dop^4$
whose Fourier transform is
$$
\widehat{\varphi}(\omega)=\frac{1}{\widehat{L}(\omega)}=\frac{1}{(\jj \omega)^4},
$$
which satisfies the assumptions of Lemma \ref{lem:analytic_interp_1}. This allows us to deduce that the corresponding Lagrange function $\varphi_{\rm int, 3}$ has exponential decay, while Corollary 1 ensures the validity of the reproduction formulas
$$
x_+^3=\sum_{k\in \N} k^3\varphi_{\rm int,3}(x-k) \quad \mbox{ and } \quad |x|^3=\sum_{k\in \Z} |k|^3\varphi_{\rm int,3}(x-k).
$$
The same considerations apply to the whole class of exponential splines with generic $\Lop=P(\Dop)=\Dop^N + a_{N-1} \Dop^{N-1} \cdots + a_0 \Op I$ \cite{Unser2005}, or, in higher dimensions, for the extensions of the polyharmonic splines where $\Lop=P(-\Delta)$ is a suitable polynomial of the (negative) Laplacian operator $-\Delta$ with Fourier symbol $\|\bw\|^2$.

\appendix
\section{Proof of Theorem \ref{Theo:realanalytic}}

We show that a periodic function on $\mathbb{T}^d$ is real-analytic if and only if its Fourier coefficients are exponentially decreasing.
We use the multidimensional version of a classical result in the theory of real analytical functions  \cite[Proposition 1.2.10]{Krantz2002}.

\begin{proposition}
\label{Prop:realanal}
A periodic function $\hat f$ is real analytic on $\mathbb{T}^d$ if and only if  it is infinitely differentiable on $\mathbb{T}^d$---\emph{i.e.}, $\hat f \in C^\infty(\mathbb{T}^d)$---and there exist two constants $C,R>0$ such that \begin{align}
\label{eq:derivdecay}
\sup_{\bm{\omega} \in \mathbb{T}^d} \left| \partial^{\V n} \hat f(\bw)\right| \le 
C \frac{\V n!}{R^{\abs{\bm{n}}}}
\end{align}
for all multi-indices $\V n \in \N^d$.
\end{proposition}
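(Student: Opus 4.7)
The plan is to exploit the compactness of $\mathbb{T}^d$ to pass between the (pointwise) local definition of real analyticity and the (global) uniform bound on all derivatives.

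For the forward implication, assume $\hat f$ is real analytic on $\mathbb{T}^d$. Fix $\bw_0 \in \mathbb{T}^d$. By definition, there exists an open neighborhood $U_{\bw_0}$ of $\bw_0$ and constants $C_{\bw_0}, R_{\bw_0} > 0$ such that $\hat f$ equals its convergent Taylor series at $\bw_0$ on $U_{\bw_0}$, and, up to shrinking $U_{\bw_0}$, the Taylor coefficients satisfy $|\partial^{\V n} \hat f(\bw_0)|/\V n! \le C_{\bw_0} R_{\bw_0}^{-|\V n|}$ (this can be argued either by extending $\hat f$ to a holomorphic function on a polydisc around $\bw_0$ in $\C^d$ and invoking the Cauchy estimates, or directly from absolute convergence of the power series on a small polydisc). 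The same estimate then holds, with $C$ replaced by a constant uniform on a smaller neighborhood of $\bw_0$, by applying the argument at every point of that neighborhood. Since $\mathbb{T}^d$ is compact, cover it by finitely many such neighborhoods $U_1,\dots,U_K$ with associated constants $(C_k, R_k)$, and set $C=\max_k C_k$, $R=\min_k R_k$ to obtain the uniform bound \eqref{eq:derivdecay}.

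For the reverse implication, assume $\hat f \in C^\infty(\mathbb{T}^d)$ satisfies \eqref{eq:derivdecay}. Fix $\bw_0 \in \mathbb{T}^d$ and write the Taylor expansion with integral remainder at order $N$:
\[
\hat f(\bw) = \sum_{|\V n|<N} \frac{\partial^{\V n} \hat f(\bw_0)}{\V n!}(\bw-\bw_0)^{\V n} + \Op R_N(\bw),
\]
with $\Op R_N(\bw) = \sum_{|\V n|=N} \frac{N}{\V n!}(\bw-\bw_0)^{\V n} \int_0^1 (1-t)^{N-1} \partial^{\V n} \hat f(\bw_0 + t(\bw-\bw_0))\, \dd t$. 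Plugging \eqref{eq:derivdecay} into this expression and using the multinomial identity $\sum_{|\V n|=N} \frac{N!}{\V n!} |\bw-\bw_0|^{\V n} = |\bw-\bw_0|_1^N$, we get $|\Op R_N(\bw)| \le C (|\bw-\bw_0|_1/R)^N$. As soon as $|\bw-\bw_0|_1 < R$, the remainder tends to zero and the Taylor series converges absolutely to $\hat f(\bw)$, so $\hat f$ is real analytic at $\bw_0$. Since $\bw_0$ was arbitrary, $\hat f$ is real analytic on $\mathbb{T}^d$.

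The main obstacle is the forward direction: the statement that local real analyticity at $\bw_0$ already yields the Cauchy-type derivative bound in a neighborhood of $\bw_0$. The cleanest route is complex extension followed by Cauchy's estimates on a polydisc; the only subtlety is ensuring that the polydisc radius and the constant can be chosen uniformly on a compact subset, but this follows from the continuity of the map $\bw \mapsto (\text{radius of convergence at } \bw)$ combined with a compactness argument.
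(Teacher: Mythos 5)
Your proof is correct, but note that the paper does not actually prove Proposition \ref{Prop:realanal}: it is invoked as a known multidimensional classical result, with a citation to Krantz \cite[Proposition 1.2.10]{Krantz2002}, and the paper's own proof work in the appendix concerns Theorem \ref{Theo:realanalytic} (via Lemma \ref{lem:abel_1d}), which merely \emph{uses} this proposition. What you have written is essentially the standard textbook proof of the cited characterization, and it holds together: in the converse direction, the integral-form Taylor remainder combined with \eqref{eq:derivdecay} and the multinomial identity gives $\abs{\Op R_N(\bw)}\le C\left(\lvert\bw-\bw_0\rvert/R\right)^N$ (note $\int_0^1(1-t)^{N-1}\,\dd t=1/N$ cancels the factor $N$), so the series converges to $\hat f$ on the $\ell_1$-ball of radius $R$ about any point; in the forward direction, the only real content is the uniformity of the Cauchy-type bounds, which you correctly obtain by complexifying to a polydisc (or estimating the differentiated power series), getting bounds valid on a smaller polydisc around each point, and then extracting a finite subcover of the compact torus and taking $C=\max_k C_k$, $R=\min_k R_k$. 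One cosmetic remark: the parenthetical appeal to ``continuity of the radius of convergence'' is unnecessary (and strictly speaking the radius is only lower semicontinuous); the finite-subcover argument you give already suffices. So your proposal supplies a self-contained proof of a statement the paper leaves to the literature, which is a reasonable thing to include, but it is not an alternative to any argument the authors themselves give.
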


We shall use the following:
\begin{lemma}\label{lem:abel_1d}
Let $c>0$. There are constants $M,R>0$ such that for every $n \in \mathbb{N}$.
\begin{equation}
	\sum_{k=0}^{\infty} k^n e^{-ck}
	\leq
	M\frac{n!}{R^n}.
\end{equation}
\end{lemma}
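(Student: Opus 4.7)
The plan is to prove the inequality by a simple splitting of the exponential factor: we write $e^{-ck} = e^{-ck/2} \cdot e^{-ck/2}$, bound the product $k^n e^{-ck/2}$ by its maximum over $k \geq 0$, and then sum the remaining geometric series.

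More precisely, first I would compute the maximum of the continuous function $t \mapsto t^n e^{-ct/2}$ for $t \geq 0$ by standard calculus. Setting the derivative to zero yields the unique critical point $t^\ast = 2n/c$, and the maximum value is
\[
\max_{t \geq 0} t^n e^{-ct/2} = \left(\frac{2n}{c}\right)^n e^{-n}.
\]
This estimate holds a fortiori for integer $k \geq 0$. I would then factor the sum as
\[
\sum_{k=0}^{\infty} k^n e^{-ck} = \sum_{k=0}^{\infty} \left( k^n e^{-ck/2} \right) e^{-ck/2} \leq \left(\frac{2n}{c}\right)^n e^{-n} \cdot \sum_{k=0}^{\infty} e^{-ck/2} = \frac{(2n/c)^n e^{-n}}{1 - e^{-c/2}}.
\]

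Next, I would invoke Stirling's inequality $n! \geq \sqrt{2\pi n}\, (n/e)^n$, which in particular gives the convenient bound $n^n e^{-n} \leq n!$ (valid for all $n \geq 1$, and trivial for $n = 0$). Substituting yields
\[
\left(\frac{2n}{c}\right)^n e^{-n} = \left(\frac{2}{c}\right)^n \frac{n^n}{e^n} \leq \left(\frac{2}{c}\right)^n n!.
\]
Combining the two displays gives the desired conclusion with the explicit constants
\[
M = \frac{1}{1 - e^{-c/2}}, \qquad R = \frac{c}{2}.
\]

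There is no real obstacle here; the only minor subtlety is choosing the split ratio in the exponential so that after extracting the pointwise maximum one is left with a convergent geometric series. Splitting as $1/2$ and $1/2$ suffices and keeps the constants tidy, but any split of the form $\alpha c$ and $(1-\alpha)c$ with $\alpha \in (0,1)$ would also work, trading off the size of $R$ against the geometric sum's prefactor.
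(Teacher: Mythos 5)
Your proof is correct, and it takes a genuinely different route from the paper's. You split the exponential as $e^{-ck}=e^{-ck/2}e^{-ck/2}$, extract the pointwise maximum $\max_{t\ge 0} t^n e^{-ct/2}=(2n/c)^n e^{-n}$, sum the leftover geometric series, and then convert $n^n e^{-n}$ into $n!$ via Stirling (one can even avoid Stirling here, since $e^n\ge n^n/n!$ follows directly from the exponential series); this yields the explicit constants $M=(1-e^{-c/2})^{-1}$ and $R=c/2$ in one stroke. The paper instead argues by induction on $n$, using Abel summation by parts to reduce $\sum_k k^n e^{-ck}$ to the lower-order sums $\sum_k k^m e^{-ck}$, $m<n$, and then choosing $R$ small enough (in particular $R<1$) for the binomial bookkeeping to close the induction. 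Your argument is shorter, avoids the inductive constraints on $R$, and produces cleaner, fully explicit constants; the paper's argument is more elementary in its toolkit (no calculus extremum, no Stirling-type bound), at the cost of the induction machinery. For the intended application (verifying the derivative bounds in Proposition \ref{Prop:realanal}), any admissible pair $(M,R)$ suffices, so either route serves equally well; just note the trivial $n=0$ case separately (as you do) so the $0^0$ convention causes no ambiguity.
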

\begin{proof}
We prove the result by induction on $n$.
For $n=0$, one has $\sum_{k=0}^{\infty} e^{-ck}
	=
	\frac{e^c}{e^c-1},$
so the result is true as long as $M>\frac{e^c}{e^c-1}$. 
Let $n\geq 1$.
We assume the result holds for all $m<n$. Then, applying Abel's lemma for summation by parts, one has
\begin{align*}
	\sum_{k=0}^{\infty} 
	k^n e^{-ck} 
	&=
	\frac{-1}{1-e^{-c}}
	\sum_{k=0}^{\infty} 
	k^n 
	\parenth{
	e^{-c(n+1)} - e^{-cn}
	} \\
	&=
	\frac{1}{1-e^{-c}}
	\sum_{k=0}^{\infty}
	\parenth{
	(k+1)^n - k^n
	}
	e^{-c(k+1)} \\
	&=
	\frac{1}{1-e^{-c}}
	\sum_{k=0}^{\infty}
	\parenth{
	\sum_{m=0}^n
	\binom{n}{m} k^m - k^n
	}
	e^{-c(k+1)} \\
	&=
	\frac{e^{-c}}{1-e^{-c}}
	\sum_{k=0}^{\infty}
	\sum_{m=0}^{n-1}
	\binom{n}{m} k^m 
	e^{-ck}.
\end{align*}
Changing the order of summation and applying our assumption gives
\begin{align*}
	\sum_{k=0}^{\infty} 
	k^n e^{-ck} 
	&=
	\frac{e^{-c}}{1-e^{-c}}
	\sum_{m=0}^{n-1}
	\binom{n}{m}
	\sum_{k=0}^{\infty}
	 k^m 
	e^{-cn} \\
	&\leq
	\frac{M e^{-c}}{1-e^{-c}}
	\sum_{m=0}^{n-1}
	\binom{n}{m}
	\frac{m!}{R^m}.
\end{align*}
Under the assumptions that $R<1$ and $\frac{e^{-c}}{1-e^{-c}}
	\sum_{k=0}^{\infty}
	\frac{1}{k!}
	<
	\frac{1}{R}$, we have
\begin{align*}
	\sum_{k=0}^{\infty} 
	k^n e^{-ck} 
	&\leq
	\frac{M}{R^{n-1}}
	\frac{e^{-c}}{1-e^{-c}}
	\sum_{m=0}^{n-1}
	\binom{n}{m}
	m!\\
	&=
	\frac{Mn!}{R^{n-1}}
	\frac{e^{-c}}{1-e^{-c}}
	\sum_{m=0}^{n-1}
	\frac{1}{(n-m)!} \\
	&\leq
	\frac{Mn!}{R^{n-1}}
	\frac{e^{-c}}{1-e^{-c}}
	\sum_{k=0}^{\infty}
	\frac{1}{k!} \\
	&\leq 
	\frac{Mn!}{R^{n}}.
\end{align*}
This completes the induction step and the proof.
\end{proof}

\begin{proof}[Proof of Theorem \ref{Theo:realanalytic}]
Let $u[\cdot] \in \Spc S'(\Z^d)$ and $\hat{u}$ its discrete Fourier transform. For convenience and without loss of generality, we assume that $u[\bm{0}]=0$. 

(i) We assume that $u[\cdot] \in \Spc E(\Z^d)$, meaning that there exist constant $C,r>0$ such that $\abs{u[\bk]} \leq C\mathrm{e}^{-r\lvert \bk\rvert}$.
Then,  for every $\bm{n} \in \N^d$ and $\bm{\omega} \in \mathbb{T}^d$, we have
$$ 	\hat{u}^{(\bm{n})}(\bm{\omega})
	=
	\sum_{\bk\in \Z^d}  \mathrm{j}^{\abs{\bm{n}}} \bk^{\bm{n}} u[\bk] \mathrm{e}^{ \mathrm{j} \langle \bk , \bm{\omega} \rangle}
.$$ We have therefore, 
\begin{align} \label{eq:bounduhat}
	\sup_{\bm{\omega} \in \mathbb{T}^d} \abs{\hat{u}^{(\bm{n})}(\bm{\omega})} 
	&\leq	\sum_{\bk \in \Z^d} \abs{\bk^{\bm{n}}} \abs{ u[\bk]} \nonumber \\
	&{\leq} 	 C\sum_{\bk \in \Z^d}   \abs{\bk^{\bm{n}}} \mathrm{e}^{-r \lvert\bk \rvert} \nonumber \\
	& =  		C2^d  \prod_{j=1}^d \sum_{k_j\in \N} k_j^{n_j}    \mathrm{e}^{- r k_j} \nonumber \\
	& {\leq}	C 2^d M^d \prod_{j =1}^d \frac{n_j !}{R^{n_j}} \nonumber \\
	& = 		(C M^d 2^d) \frac{\bm{n}!}{R^{\abs{\bm{n}}}},
\end{align}
where the constants $M$ and $R$ come from Lemma \ref{lem:abel_1d}. According to Proposition \ref{Prop:realanal}, \eqref{eq:bounduhat} implies that $\hat{u}$ is real analytic on $\T^d$. 

(ii) Let now assume that $\hat{u}$ is real analytic on $\T^d$. From Proposition \ref{Prop:realanal}, there exists constants $C,R>0$ such that $\abs{\bm{k}^{\bm{n}} u[\bm{k}]} \leq \lVert \hat{u}^{(\bm{n})} \rVert_{L_2(\T^d)} \leq C \frac{\bm{n} !}{R^{\abs{\bm{n}}}} \leq C \frac{\bm{n}^{\bm{n}}}{R^{\abs{\bm{n}}}}$. 
Therefore, for $\widetilde{C}$ big enough, one has, for every $\bm{k}\in \Z^d$ and $\bm{n} \in \N^d$,
\begin{equation}
	\abs{u[\bm{k}] } \leq \widetilde{C} \frac{\bm{n}^{\bm{n}}}{1 + (R \bm{k})^{\bm{n}} }.
\end{equation}
Indeed, 
if $ (R \bm{k})^{\bm{n}} \leq 1$, because $\frac{1}{2} \leq \frac{1}{1 +  (R \bm{k})^{\bm{n}}}$, we have $\abs{u[\bm{k}] } \leq \lVert \hat{u} \rVert_{L_2(\T^d)} \leq 2 \lVert \hat{u} \rVert_{L_2(\T^d)}  \frac{\bm{n}^{\bm{n}}}{1 + (R \bm{k})^{\bm{n}}}$, and 
if $ (R \bm{k})^{\bm{n}} > 1$, then $\frac{1}{(R \bm{k})^{\bm{n}}} \leq \frac{2}{1 + (R \bm{k})^{\bm{n}}}$, and $\abs{u[\bm{k}] } \leq C\frac{\bm{n}^{\bm{n}}}{(R \bm{k})^{\bm{n}} } \leq 2C \frac{\bm{n}^{\bm{n}}}{1 + (R \bm{k})^{\bm{n}} }$, so the constant $\widetilde{C} =2 \max(C, \lVert \hat{u} \rVert_{L_2(\T^d)}  )$ works.

Let us fix $\bm{n} = \lfloor \alpha \bm{k} \rfloor$ for some constant $\alpha >0$ (which means that $n_j =  \lfloor \alpha k_j\rfloor$ is the biggest integer smaller or equal to $\alpha k_j$ for every $j$). Then, we have
\begin{align*}
	\abs{u[\bm{k}] } & \leq \widetilde{C} \frac{ (\alpha \bm{k})^{\bm{n}} }{1 + (R \bm{k})^{\bm{n}}} \\
				& \leq \widetilde{C} \left( \frac{\alpha}{R} \right)^{\abs{\bm{n}}},
\end{align*}
where the last equality follows from $ (\alpha \bm{k})^{\bm{n}} =  \left( \frac{\alpha}{R} \right)^{\abs{\bm{n}}} (R \bm{k})^{\bm{n}} \leq  \left( \frac{\alpha}{R} \right)^{\abs{\bm{n}}}(1 + (R \bm{k})^{\bm{n}})$.
By choosing $\alpha < R$ and $\epsilon = \alpha \log(\alpha /R) >0$, we deduce that 
\begin{equation*}
	\abs{u[\bm{k}] } \leq \widetilde{C} \left( \frac{\alpha}{R} \right)^{\alpha \abs{ \bm{k}}} \leq \widetilde{C}  \mathrm{e}^{- \epsilon \abs{\bm{k}}}
\end{equation*}
for every $\bm{k} \in \Z^d$, \emph{i.e.} $u[\cdot] \in \Spc E(\Z^d)$. 
\end{proof}

\bibliographystyle{plain}

\bibliography{Wiener}
\end{document}